\long\def\symbolfootnote[#1]#2{\begingroup%
\def\thefootnote{\fnsymbol{footnote}}\footnote[#1]{#2}\endgroup}
\newcommand{\Prob}{\mathrm{P}}
\newcommand{\Expect}{\mathrm{E}}
\newcommand{\indic}{\mathbb{I}}
\newcommand{\defeq}{\overset{\Delta}{=}}
\newcommand{\hLam}{\tilde{\Lambda}}
\newtheorem{remark}{Remark}
\newtheorem{theorem}{Theorem}
\newtheorem{lemma}{Lemma}
\newtheorem{algorithm}{Algorithm}
\begin{document}
\title{Data-Efficient Quickest Change Detection with On-Off Observation Control}
\author{\IEEEauthorblockN{Taposh Banerjee and Venugopal V. Veeravalli}\\
\IEEEauthorblockA{ECE Department and
Coordinated Science Laboratory\\
University of Illinois at Urbana-Champaign, Urbana, IL\\
Email: banerje5,vvv@illinois.edu}}

%\onecolumn
\maketitle

%\vspace{-1.8cm}

\begin{abstract}
%\vspace{-0.3cm}
In this paper we extend the Shiryaev's quickest change detection
formulation by also accounting for the cost of observations used
before the change point. The observation cost is captured through the
average number of observations used in the detection process before
the change occurs. The objective is to select an on-off observation
control policy, that decides whether or not to take a given
observation, along with the stopping time at which the change is
declared, so as to minimize the average detection delay, subject to
constraints on both the probability of false alarm and the observation
cost. By considering a Lagrangian relaxation of the constraint
problem, and using dynamic programming arguments, we obtain an
\textit{a posteriori} probability based two-threshold algorithm that
is a generalized version of the classical Shiryaev algorithm. We
provide an asymptotic analysis of the two-threshold algorithm and show
that the algorithm is asymptotically optimal, i.e., the performance of
the two-threshold algorithm approaches that of the Shiryaev algorithm,
for a fixed observation cost, as the probability of false alarm goes
to zero. We also show, using simulations, that the two-threshold
algorithm has good observation cost-delay trade-off curves, and
provides significant reduction in observation cost as compared to the
naive approach of fractional sampling, where samples are skipped
randomly. Our analysis reveals that, for practical choices of
constraints, the two thresholds can be set independent of each other:
one based on the constraint of false alarm and another based on the
observation cost constraint alone.

\symbolfootnote[0]{This research is partially supported by the National Science Foundation under grant CCF 08-30169, through the University of Illinois at Urbana-Champaign.
%This research was also supported in part by the U.S. Army Research Office MURI grant W911NF-06-1-0094, through a subcontract from Brown University at the University of Illinois.
This research was also supported in part by the U.S. Army Research Office MURI grant W911NF-06-1-0094, through a subcontract from Brown University at the University of Illinois, and by the U.S. Defense Threat Reduction Agency through subcontract 147755 at the University of Illinois from prime award HDTRA1-10-1-0086.}
\end{abstract}

\section{Introduction} \label{sec:Intro}
In the  Bayesian quickest change detection problem proposed by
Shiryaev \cite{Shiryaev63}, there is a
sequence of random variables, $\{X_n\}$, whose distribution changes at a random time $\Gamma$. It is assumed that before $\Gamma$, $\{X_n\}$ are independent and identically distributed (i.i.d.) with density $f_0$, and after $\Gamma$
they are i.i.d.\ with density $f_1$. The distribution of $\Gamma$ is assumed to be known and modeled as a geometric random variable with parameter $\rho$.  The objective is to find a stopping
time $\tau$, at which time the change is declared, such that the average detection delay is minimized subject to a constraint on the probability of false alarm.

In this paper we extend Shiryaev's formulation by explicitly accounting for the cost of the observations used in the detection process.
We capture the observation penalty (cost) through the average number of observations used before the change point $\Gamma$, % (referred by observation cost in the following),
and allow for a dynamic control policy that determines whether or not a given observation is taken. The objective is to choose the
observation control policy along with the stopping time $\tau$, so that the average detection delay is minimized subject to constraints on the probability of false alarm and the observation cost.
The motivation for this model comes from the consideration of the following engineering applications.

In many monitoring applications, for example infrastructure monitoring, environment monitoring, or habitat monitoring, especially of endangered species, surveillance
is only possible through the use of inexpensive battery operated sensor nodes. This could be due to the high cost of employing a wired sensor network or a human observer, or the infeasibility of having a human intervention. For example in habitat monitoring of certain sea-birds as reported in \cite{Mainwaring}, the very reason the birds chose the habitat was because of the absence of humans
and predators around it. In these applications the sensors are typically deployed for long durations, possibility over months, and due the constraint on energy, the most effective way to save energy at the sensors is to switch the sensor between on and off states. An energy-efficient quickest change detection algorithm can be employed here that can operate over months and trigger other more sophisticated and costly sensors, which are possibly power hungry, or more generally, trigger a larger part of the sensor network \cite{JenniferRice}.
 %, when some change is detected in the process under observation.
 This change could be a fault in the structures in infrastructure monitoring \cite{JenniferRice}, the arrival of the species to the habitat \cite{Mainwaring}, etc.

In industrial quality control, statistical control charts are designed that can detect a sustained deviation of the industrial process from normal behavior \cite{SPC_SOTA}.
Often there is a cost associated with acquiring the statistics for the control charts and it is of interest to consider designing \textit{economic-statistical} control chart schemes
\cite{SPC_SOTA, Tagaras, Reynolds, AssafPollak1993,Yakir1996, Makis2005, Assaf1988}. One approach to economic-statistical control chart design has been to use algorithms from the change detection literature, such as Shewhart, EWMA and CUSUM, as control charts, and optimize over the choice of sample size, sampling
interval and control limits \cite{Tagaras,Reynolds}.
Another approach has been to find optimal sampling rates in the problem of detection of a change
in the drift of a sequence of Brownian motions with global false alarm constraint \cite{AssafPollak1993,Yakir1996}.
Thus, these approaches are essentially non-Bayesian.
It has been demonstrated, mostly through numerical results, that Bayesian control charts, which choose the parameters of the detection algorithms based on the
posterior probability that the process is out of control, perform better than the traditional control charts based on
Shewhart, EWMA or CUSUM; see \cite{Makis2005}, and the references therein.
The problem of dynamic sampling for detecting a change in the drift of a standard Brownian motion is considered for an exponentially distributed change point in \cite{Assaf1988}.
For practical applications, it is of interest to consider the economic design of Bayesian control charts in discrete time.
The design of a Bayesian economic-statistical control chart is considered for a shift in the mean vector of a multivariate Gaussian model in \cite{Makis2005}.
But, the problem is modeled as an optimal stopping problem that minimizes the long term average cost, and hence, there is no control on the number of observations used at each
time step.
The process control problem is fundamentally a quickest change detection problem, and it is therefore appropriate that economic-statistical schemes for process control are developed
in this framework. %In this paper, we consider the economic-statistical process control problem in the Bayesian setting of Shiryaev, and derive a simple two-threshold Bayesian test.
%We show conditions under which it is approximately optimal, and obtain analytical approximations
%of its performance using which the thresholds can be set directly from the constraints.

In most of the above mentioned or similar applications, changes are rare and quick detection is often required.
%and the total average cost of taking observations is dominated by that incurred before the change actually happens.
So, %traditional a control policy is required which enforces observation control before the change point
ideally we would like to
take as few observations as possible before change to reduce the observation cost, and skip as few as possible after change to minimize delay, while maintaining an acceptable probability of false alarm.
%We will show in this paper that using the two-threshold algorithm that we derive, it is possible to skip significant fraction of the observations before change, and obtain
%approximately the same false alarm probability and detection delay as that of the classical Shiryaev algorithm.

There have been other formulations of the Bayesian quickest change detection problem that are relevant to sensor networks: see
\cite{Veeravalli01}-\cite{TaposhVinod}.
The change detection problem studied here was earlier considered in a similar set-up
for sensor networks in \cite{Prem}. But owing to the complexity of the problem, the structure of the optimal policy was studied only numerically, and for the same reason, no analytical expressions were developed for the performance.

The goal of this paper is to develop a deeper understanding of the trade-off between delay, false alarm probability, and the cost of observation or information,
and to identify a control policy for data-efficient quickest change detection that has some optimality property and is easy to design.
We extend the Shiryaev's formulation by also accounting for the cost of observations used before the change point,
and obtain an {\em a posteriori} probability based two-threshold algorithm that is asymptotically optimal. Specifically,
we show that the probability of false alarm and the average detection delay of the two-threshold algorithm approaches that of the Shiryaev algorithm,
for a fixed observation cost constraint, as the probability of false alarm goes to zero.
Even for moderate values of the false alarm probability, we will show using simulations that the two-threshold algorithm provides good performance.
We also provide an asymptotic analysis of the two-threshold algorithm, i.e., we obtain expressions for the delay, probability of false alarm and the average
number of observations used before and after change, using which the thresholds can be set to meet
the constraints on probability of false alarm and observation cost.

The layout of the paper is as follows.
In the following section, we set up the data-efficient quickest change detection problem with on-off observation control and introduce the two-threshold algorithm.
In Section~\ref{sec:Analysis}, we provide an asymptotic analysis of the two-threshold algorithm. In Section~\ref{sec:Approx_AND_Numerical}, we provide approximations
using which the analytical expressions in Section~\ref{sec:Analysis} can be computed, and validate the approximations by comparing them with the corresponding values
obtained via simulations. In Section~\ref{sec:OptimalityofTwoThreshold}, we prove the asymptotic
optimality of the two-threshold algorithm, provide its false alarm-delay-observation cost trade-off curves and also compare its performance with the naive approach of fractional sampling, where observations are skipped randomly.

\section{Problem Formulation and the Two-threshold Algorithm}
\label{sec:Model}
As in  the  model for the classical Bayesian quickest change detection problem described in Section \ref{sec:Intro}, we have a sequence of random variables $\{X_n\}$, which are i.i.d.\ with density $f_0$ before the random change point $\Gamma$, and i.i.d.\ with density $f_1$ after $\Gamma$. The change point $\Gamma$ is modeled as geometric with parameter $\rho$, i.e., for $0 < \rho < 1, \ 0 \leq \pi_0 < 1$,
\[
\pi_k  = \Prob \{\Gamma= k \} = \pi_0\; \indic_{\{k=0\}} + (1-\pi_0) \rho (1-\rho)^{k-1} \; \indic_{\{k\geq 1\}},
\]
where $\indic$ is the indicator function, and $\pi_0$ represents the probability of the change having happened before the observations are taken. Typically $\pi_0$ is set to 0.

In order to minimize the average number of observations used before $\Gamma$, at each time instant, a decision is made on whether to use the observation in the next time step, based on all the
available information. Let $S_k \in \{0, 1\}$, with $S_k = 1$ if it is been decided to take the observation at time $k$, i.e. $X_k$ is available for decision making, and $S_k=0$ otherwise. Thus, $S_k$ is an on-off (binary) control input based on the information available up to time $k-1$, i.e.,
\[
S_k = \mu_{k-1} (I_{k-1}), \quad k = 1, 2, \ldots
\]
with $\mu$ denoting the control law and $I$ defined as:
\[
I_k = \left[ S_1, \ldots, S_k, X_1^{(S_1)}, \ldots, X_k^{(S_k)} \right].
\]
Here, $X_i^{(S_i)}$ represents $X_i$ if $S_i=1$, otherwise $X_i$ is absent from the information vector $I_k$. The choice of $S_1$ is  based on the prior $\pi_0$.

As in the classical change detection problem, the end goal is to choose a stopping time on the observation sequence at which time the change is declared. Denoting the stopping time by $\tau$, we can define the average detection delay (ADD) as
\[
\text{ADD} = \mathrm{E}\left[(\tau - \Gamma)^+\right].
\]
Further, we can define the probability of false alarm (PFA) as
\[
\text{PFA} = \mathrm{P}(\tau<\Gamma).
\]
The new performance metric for our problem is the average number of observations (ANO) used before $\Gamma$ in detecting the change:
\[
\text{ANO} = \mathrm{E}\left[\sum_{k=1}^{\min\{\tau, \Gamma-1\}} S_k\right].
\]

Let $\gamma = \{\tau, \mu_0, \ldots, \mu_{\tau-1} \}$ represent a policy for cost-efficient quickest change detection.  We wish to solve the following optimization problem:
\begin{eqnarray}
\label{eq:basicproblem}
\underset{\gamma}{\text{minimize}} && \text{ADD}(\gamma),\nonumber \\
\text{subject to } \hspace{-0.15cm} &&\text{PFA}(\gamma) \leq \alpha, \text{  and  } \text{ANO}(\gamma) \leq \beta,
\end{eqnarray}
where $\alpha$ and $\beta$ are given constraints. Towards solving \eqref{eq:basicproblem}, we consider a Lagrangian relaxation
of this problem which can be approached using dynamic programming:
%
%The Lagrangian relaxation of the optimization problem in \eqref{eq:basicproblem} is,
\begin{equation}
\label{eq:Lagrangian}
J^* = \min_{\gamma} \text{ADD}(\gamma) + \lambda_f \; \text{PFA}(\gamma) + \lambda_e\; \text{ANO}(\gamma),
\end{equation}
where $\lambda_f$ and $\lambda_e$ are  Lagrange multipliers. It is easy to see that if $\lambda_f$ and $\lambda_e$ can be found such that
the solution to \eqref{eq:Lagrangian} achieves the PFA and ANO constraints with equality, then the solution to \eqref{eq:Lagrangian} is also the solution to
\eqref{eq:basicproblem}.
%In general it can be argued that the solution to the constrained problem \eqref{eq:basicproblem} can be
%obtained by randomizing over solutions to \eqref{eq:Lagrangian}. Due to the result in \cite{Ross}, it can be further argued that the
%number of policies over which the randomization has to be done will be finite.

The problem in \eqref{eq:Lagrangian} can be converted to an appropriate Markov control problem using steps similar to those followed in \cite{Prem}.
%In the following we summarize these steps.

Let $\Theta_k$ denote the state of the system at time $k$. After the stopping time $\tau$ it is assumed that the system enters a terminal state ${\cal T}$ and stays there. For $k < \tau$, we have  $\Theta_k =0$ for $k < \Gamma$, and $\Theta_k = 1$ otherwise. Then we can write
\[
\text{ADD}  = \Expect\left[ \sum_{k=0}^{\tau-1} \indic_{\{\Theta_k= 1\}} \right]
\]
and $\text{PFA} = \Expect [ \indic_{\{\Theta_\tau = 0\}}]$.

Furthermore, let $D_k$ denote the stopping decision variable at time $k$, i.e., $D_k = 0$ if $k < \tau$ and $D_k = 1$ otherwise. Then the optimization problem in \eqref{eq:Lagrangian} can be written as a minimization of an additive cost over time:
\[
J^* = \min_{\gamma}  \Expect\left[\sum_{k=0}^{\tau}  g_k (\Theta_k, D_k, S_k)\right]
\]
with
\[
g_k (\theta, d, s) = \indic_{\{\theta \neq {\cal T}\} } \; \left[  \indic_{\{\theta=1\}} \indic_{\{d=0\}} + \lambda_f \; \indic_{\{\theta=0\}} \indic_{\{d=1\}} \right.
 \left.  + \lambda_e \; \indic_{\{\theta=0\}} \indic_{\{s=1\}} \indic_{\{d=0\}}\right].
\]
Using standard arguments \cite{Bertsekas} it can be seen that this optimization problem can be solved using infinite horizon dynamic programming with sufficient statistic (belief state) given by:
\[
p_k = \Prob\{ \Theta_k = 1 \; | \; I_k\} = \Prob \{ \Gamma \leq k \; | \; I_k\}.
\]
Using Bayes' rule, $p_k$ can be shown to satisfy the recursion
\[
p_{k+1} = \begin{cases}
\Phi^{(0)} (p_k) & \text{~if~} S_{k+1} = 0\\
\Phi^{(1)} (X_{k+1}, p_k) & \text{~if~} S_{k+1} = 1
\end{cases}
\]
where
\begin{equation} \label{eq:recursionSkip}
\Phi^{(0)} (p_k)  = p_k + (1-p_k) \rho
\end{equation}
and
\begin{equation} \label{eq:recursionTake}
\Phi^{(1)} (X_{k+1},p_k)  =  \frac{\Phi^{(0)} (p_k)   L(X_{k+1})}{ \Phi^{(0)} (p_k)  L(X_{k+1}) + (1-\Phi^{(0)} (p_k) )}
\end{equation}
with $L(X_{k+1}) = f_1(X_{k+1})/f_0(X_{k+1})$ being the likelihood ratio,  and $p_0=\pi_0$. Note that the structure of recursion for $p_k$ is independent of time $k$.

The optimal policy for the problem given in \eqref{eq:Lagrangian} can be obtained from the solution to the Bellman equation:
%\begin{equation} \label{eq:Bellman}
%J(p_k) = \min_{d_k,s_{k+1}}  \lambda_f \; (1-p_k) \indic_{\{d_k=1\}} + \indic_{\{d_k=0\}} \left[ p_k + B_{0} (p_k) \indic_{\{s_{k+1}=0\}} + (\lambda_e (1-p_k) + B_{1} (p_k)) \indic_{\{s_{k+1}=1\}} \right]
%\end{equation}
\begin{equation} \label{eq:Bellman}
J(p_k) = \min_{d_k,s_{k+1}}  \lambda_f \; (1-p_k) \indic_{\{d_k=1\}} + \indic_{\{d_k=0\}} \left[ p_k + A_J (p_k)\right],
\end{equation}
where
\[
A_J (p_k) = B_{0} (p_k) \indic_{\{s_{k+1}=0\}} + (\lambda_e (1-p_k) + B_{1} (p_k)) \indic_{\{s_{k+1}=1\}},
\]
with
\[
B_{0} (p_k) = J (\Phi^{(0)} (p_k))\]
and
\[ B_{1} (p_k) = \Expect[J (\Phi^{(1)} (X_{k+1}, p_k))].
\]
It can be shown by an induction argument (see, e.g.,  \cite{Prem}) that $J$, $B_{0}$ and $B_{1}$ are all non-negative concave functions on the interval $[0,1]$,  and that $J(1) = B_{0} (1) = B_{1} (1) =0$. Also, by Jensen's inequality
\[
B_{1}(p) \leq J (\Expect[\Phi^{(1)} (X, p)]) = B_{0} (p), \quad p \in [0,1].
\]
Let
\[d(p_k) = B_{0} (p_k) - B_{1} (p_k).\]
Then, from the above properties of $J$, $B_{0}$ and $B_{1}$, it is easy to show that  the optimal policy $\gamma^* = (\tau^*, \mu_0^*, \mu_1^*, \ldots, \mu_{\tau-1}^*)$ for  the problem given in \eqref{eq:Lagrangian} has the following structure:
\begin{equation}
\label{eq:OptimalAlgo}
\begin{split}
               S_{k+1}^* & = \mu_k^* (p_k) =  \begin{cases}
               0 & \mbox{ if } d(p_k) < \lambda_e (1-p_k)\\
               1 & \mbox{ if } d(p_k) \geq \lambda_e (1-p_k)
               \end{cases}\\
               \tau^* &= \inf\left\{ k\geq 1: p_k > A^* \right\}.
               \end{split}
\end{equation}
%with $0 \leq B^*  \leq C^*  \leq A^*  \leq 1$. The value of $A^*$, $B^*$, and $C^*$ depend on the choice of $\lambda_f$ and $\lambda_e$.
\begin{remark}
Since, $d(p_k)\geq 0 \ \ \forall p_k$, the algorithm in \eqref{eq:OptimalAlgo} reduces to the classical Shiryaev algorithm when $\lambda_e=0$ \cite{Shiryaev63}.
\end{remark}

The optimal stopping rule $\tau^*$ is similar to the one of the Shiryaev problem. But, the observation control is not explicit and one has to evaluate the differential cost function
$d(p_k)$ at $p_k$ at each time step to choose $S_{k+1}$.
%However, numerical studies of the Bellman equation in \eqref{eq:Bellman} shows that, for most choices of $\rho$, $f_0$ and $f_1$, that
%we have tried,
%the optimal algorithm in \eqref{eq:OptimalAlgo} has the following two-threshold structure.

%\begin{remark}
%Note that $\gamma(A,B)$ is a special case of $\gamma(A,B,C)$ with $C=A$.
%\end{remark}
\begin{figure}[htb]
  \centering
  \subfloat[$d(p) = B_0(p) - B_1(p)$ and $\lambda_e(1-p)$ as a function of $p$]{\label{fig:BellmanTwoThresholdCurvesB0B1}\includegraphics[width=0.48\textwidth]{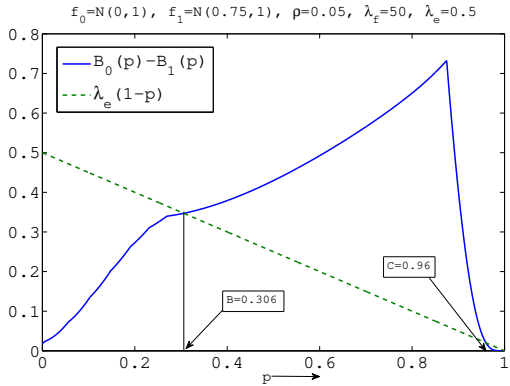}}
  \subfloat[$p+A_J(p)$ and $\lambda_f(1-p)$ as a function of $p$]{\label{fig:BellmanTwoThresholdCurvesAJ}\includegraphics[width=0.48\textwidth]{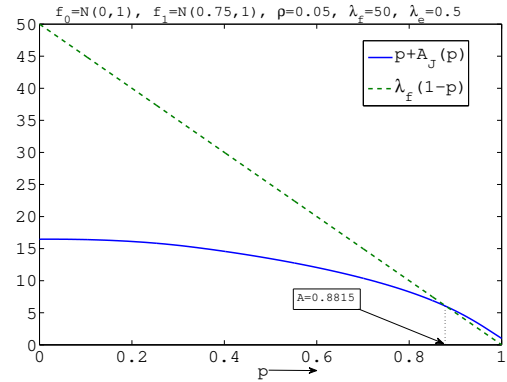}}
  \caption{Example where a two-threshold policy is optimal: $f_0 \sim {\cal N}(0,1)$, $f_1 \sim {\cal N}(0.75,1)$, $\rho=0.05$, $\lambda_f=50$, and $\lambda_e=0.5$. Value iteration: number of iterations=1500, number of points=2000.}
  \label{fig:BellmanTwoThresholdCurves}
\end{figure}
In Fig. \ref{fig:BellmanTwoThresholdCurvesB0B1} we plot the
differential cost function $d(p) = B_0(p) - B_1(p)$ and $\lambda_e(1-p)$ as a function of $p$. We note that, although $B_0(p)$ and $B_1(p)$ are concave in $p$, their
difference $d(p)$ is not. Thus, the line $\lambda_e(1-p)$ can intersect $d(p)$ at more than two points. However, in Fig.~\ref{fig:BellmanTwoThresholdCurvesB0B1} we see that
there are exactly two points of intersection, one at $B=0.306$ an another at $C=0.96$. In Fig.~\ref{fig:BellmanTwoThresholdCurvesAJ} we plot the functions $p+A_J(p)$ and
$\lambda_f(1-p)$ as a function of $p$. This figure shows that the stopping threshold is $A=0.8815<0.96=C$. Thus, from
Fig~\ref{fig:BellmanTwoThresholdCurvesB0B1} and \ref{fig:BellmanTwoThresholdCurvesAJ} we see that the optimal policy has two thresholds. For most of the system parameters
we have tried, the cost functions behave in this way, and hence for these values, the following two-threshold policy is optimal.
\begin{algorithm}[Two-threshold policy: $\gamma(A,B)$]
\label{algo:TwoThreshold}
Start with $p_0=0$ and use the following control, with $B<A$, for $k\geq 0$:
\begin{equation}
\label{eq:TwoThresholdAlgo}
\begin{split}
               S_{k+1} & = \mu_k (p_k) =  \begin{cases}
               0 & \mbox{ if } p_k < B\\
               1 & \mbox{ if } p_k \geq B
               \end{cases}\\
               \tau &= \inf\left\{ k\geq 1: p_k > A \right\}.
               \end{split}
\end{equation}
The probability $p_k$ is updated using \eqref{eq:recursionSkip} and \eqref{eq:recursionTake}.
\end{algorithm}
Extensive numerical studies of the Bellman equation \eqref{eq:Bellman} also shows that there exists choices of $\rho$, $f_0$, $f_1$, $\lambda_f$ and $\lambda_e$ for which
\eqref{eq:TwoThresholdAlgo} is not optimal. In Fig.~\ref{fig:BellmanThreeThresholdCurves} we plot one such case. Note from
Fig.~\ref{fig:BellmanThreeThresholdCurvesB0B1} that again there are two points of intersection of the plotted curves, one at $B=0.9315$ and another at $C=0.973$. But
Fig.~\ref{fig:BellmanThreeThresholdCurvesAJ} shows that $A=0.986>0.973=C$. Thus, the optimal policy has three thresholds. But, note that the value of $\rho=0.7$
is quite large and hence impractical.
Also, simulations with these choices of thresholds show that the ANO is approximately zero. In all the cases we have found, for which the two-threshold policy is not optimal,
the value of $\rho$ is large and ANO is almost zero.
\begin{figure}[htb]
  \centering
  \subfloat[$d(p) = B_0(p) - B_1(p)$ and $\lambda_e(1-p)$ as a function of $p$]{\label{fig:BellmanThreeThresholdCurvesB0B1}\includegraphics[width=0.48\textwidth]{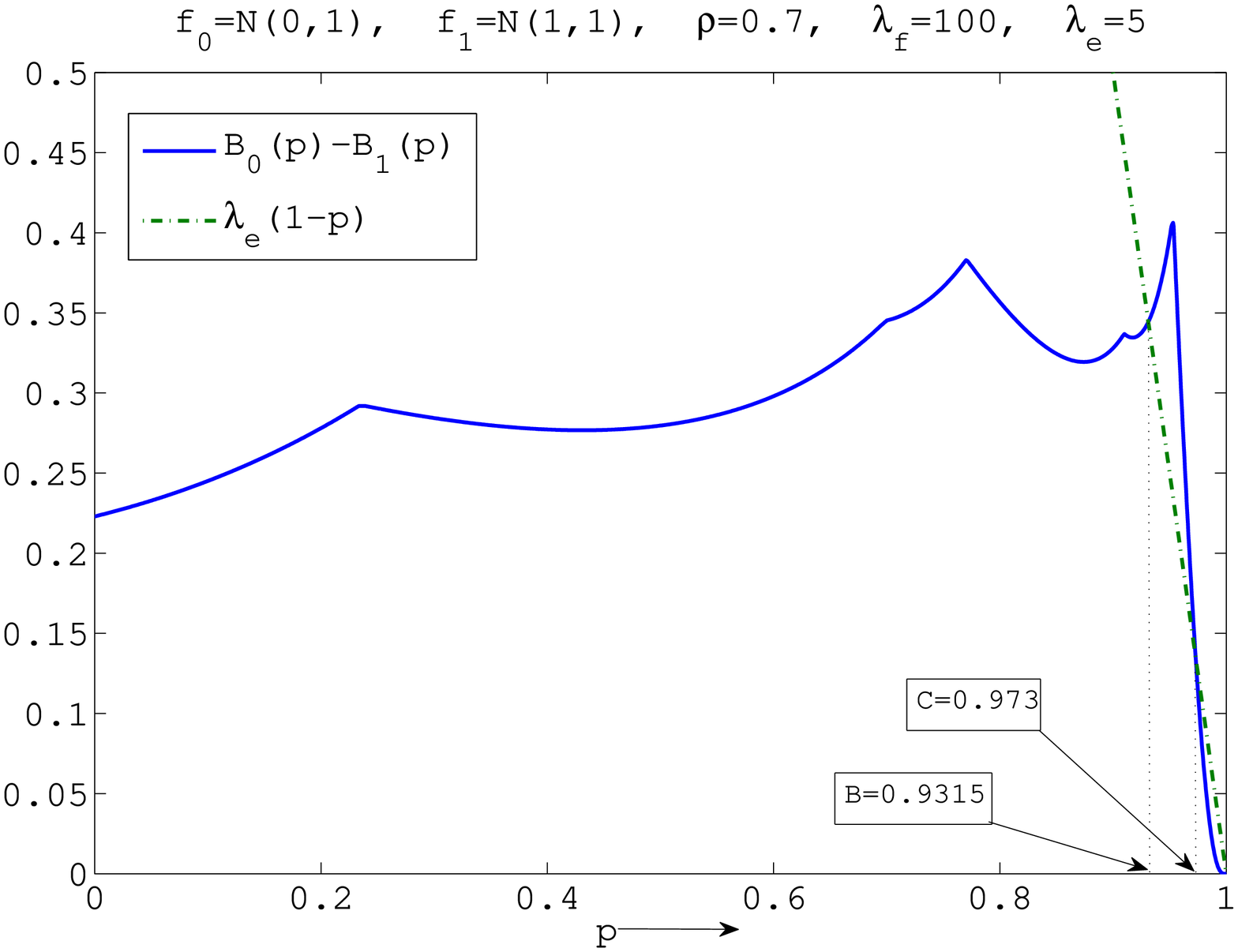}}
  \subfloat[$p+A_J(p)$ and $\lambda_f(1-p)$ as a function of $p$]{\label{fig:BellmanThreeThresholdCurvesAJ}\includegraphics[width=0.48\textwidth]{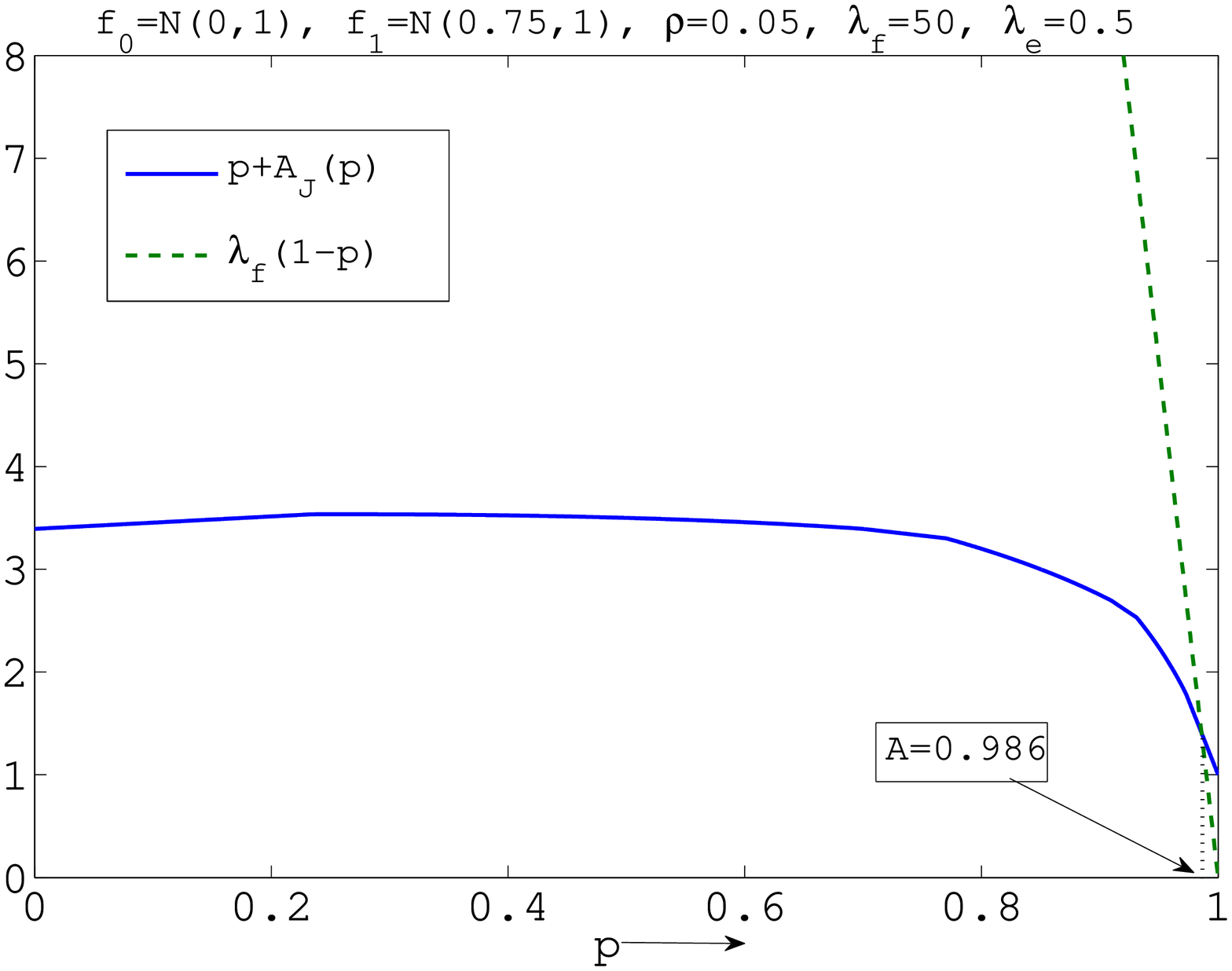}}
  \caption{Example where a two-threshold policy is not optimal: $f_0 \sim {\cal N}(0,1)$, $f_1 \sim {\cal N}(1,1)$, $\rho=0.7$, $\lambda_f=100$ and $\lambda_e=5$. Value iteration: number of iterations=1500, number of points=2000.}
  \label{fig:BellmanThreeThresholdCurves}
\end{figure}

%From a practical point of view, even if a three-threshold policy \eqref{eq:ThreeThresholdAlgo} is optimal, and even though it is a generalization of
%the two-threshold algorithm \eqref{eq:TwoThresholdAlgo}, one would like to use the two-threshold algorithm for the following reasons.

From a practical point of view, even if a two-threshold policy or algorithm \eqref{eq:TwoThresholdAlgo} is not optimal,
one would like to use the algorithm for the following reasons.
First, as the asymptotic analysis given in Section \ref{sec:Analysis} will reveal, if the PFA constraint is moderate to small and the ANO constraint is not very severe,
then the thresholds $A$ and $B$ in $\gamma(A,B)$ can be set independently: the threshold $A$ can be set only based on the constraint $\alpha$,
and the threshold $B$ can be set based on the constraint $\beta$ alone.
Second, apart from being simple, the two-threshold algorithm \eqref{eq:TwoThresholdAlgo} is asymptotically optimal as the PFA $\to 0$.
Finally, $\gamma(A, B)$ has good trade-off curves, i.e., the ANO of $\gamma(A,B)$ can be reduced by up to 70\%, by keeping the ADD of the $\gamma(A,B)$ within 10\% of the
ADD of the Shiryaev algorithm.

It is interesting to note that a two-threshold algorithm similar to that in \eqref{eq:TwoThresholdAlgo} was shown to be exactly optimal in \cite{Girshick} for a
different but related problem of quality
control where inspection costs are considered or when the tests are destructive.

\section{Asymptotic analysis of $\gamma(A,B)$}
\label{sec:Analysis}
%\vspace{-0.3cm}
In this section we derive asymptotic approximations for ADD, PFA and ANO for the two-threshold algorithm $\gamma(A,B)$. To that end, we first convert the recursion for $p_k$ (see \eqref{eq:recursionSkip} and \eqref{eq:recursionTake}) to a form that
is amenable to asymptotic analysis.

Define, $Z_k = \log\frac{p_k}{1-p_k}$ for $k\geq 0$. This new variable $Z_k$ has a one-to-one mapping with $p_k$. By defining
\[a = \log\frac{A}{1-A}, \ \ \ \ b = \log\frac{B}{1-B},\]
we can write the recursions (\ref{eq:recursionSkip}) and (\ref{eq:recursionTake})
in terms of $Z_k$.

\noindent For $k\geq 1$,
\begin{equation} \label{eq:ZkrecursionTake}
Z_{k+1}  = Z_k + \log L(X_{k+1}) + |\log(1-\rho)| + \log\left(1 + \rho \, e^{-Z_k}\right), \mbox{ if } Z_k \in [b, a)
\end{equation}
and
\begin{equation} \label{eq:ZkrecursionSkip}
Z_{k+1}  =  Z_k + |\log(1-\rho)| + \log\left(1 + \rho \, e^{-Z_k}\right), \mbox{ if }Z_k \notin [b, a)
\end{equation}
with
\[Z_1 = \log\left(e^{Z_0} + \rho\right) + |\log(1-\rho)| + \log\left(L(X_1)\right) \indic_{\{Z_0 \in [b, a)\}}.\]
Here we have used the fact  that $S_{k+1}=1$ if $p_k \in [B, A)$, and $S_{k+1}=0$ otherwise (see (\ref{eq:OptimalAlgo})).
The crossing of thresholds $A$ and $B$ by $p_k$ is equivalent to the crossing of thresholds $a$ and $b$ by $Z_k$. Thus the stopping time for $\gamma(A,B)$
(equivalently $\gamma(a,b)$ with some abuse of notation) is
\[ \tau = \inf\left\{ k\geq 1: Z_k > a \right\}.\]

In this section we study the asymptotic behavior of $\gamma(a,b)$ in terms of $Z_k$,
under various limits of $a,b$ and $\rho$. Specifically, we provide two asymptotic expressions for ADD, one for fixed thresholds $a,b$, as $\rho\to 0$,
and another for fixed $b$ and $\rho$, as $a \to \infty$.
We also provide, as $a \to \infty$ and $\rho \to 0$, an asymptotic expression for PFA for fixed $b$.
Finally, we also provide asymptotic estimates of the average number of observations used before (ANO) and after the change point $\Gamma$.
Note that the limit of $a \to \infty$ corresponds to PFA $\to 0$.
%and the limit of $\rho \to 0$ corresponds to a rare change event.

Fig. \ref{fig:Zkevolution} shows a typical evolution of $\gamma(a,b)$, i.e., of $Z_k$ using (\ref{eq:ZkrecursionTake}) and
(\ref{eq:ZkrecursionSkip}), starting at time 0. Note that for $Z_k \in [b, a)$, recursion (\ref{eq:ZkrecursionTake}) is employed, while outside that interval, recursion (\ref{eq:ZkrecursionSkip}), which only uses the prior $\rho$, is employed.
As a result $Z_k$ increases monotonically outside $[b, a)$.
%\label{sec:Analysis}
\begin{figure}[htb]
\center
%\hspace{3cm}
\includegraphics[width=9cm, height=6cm]{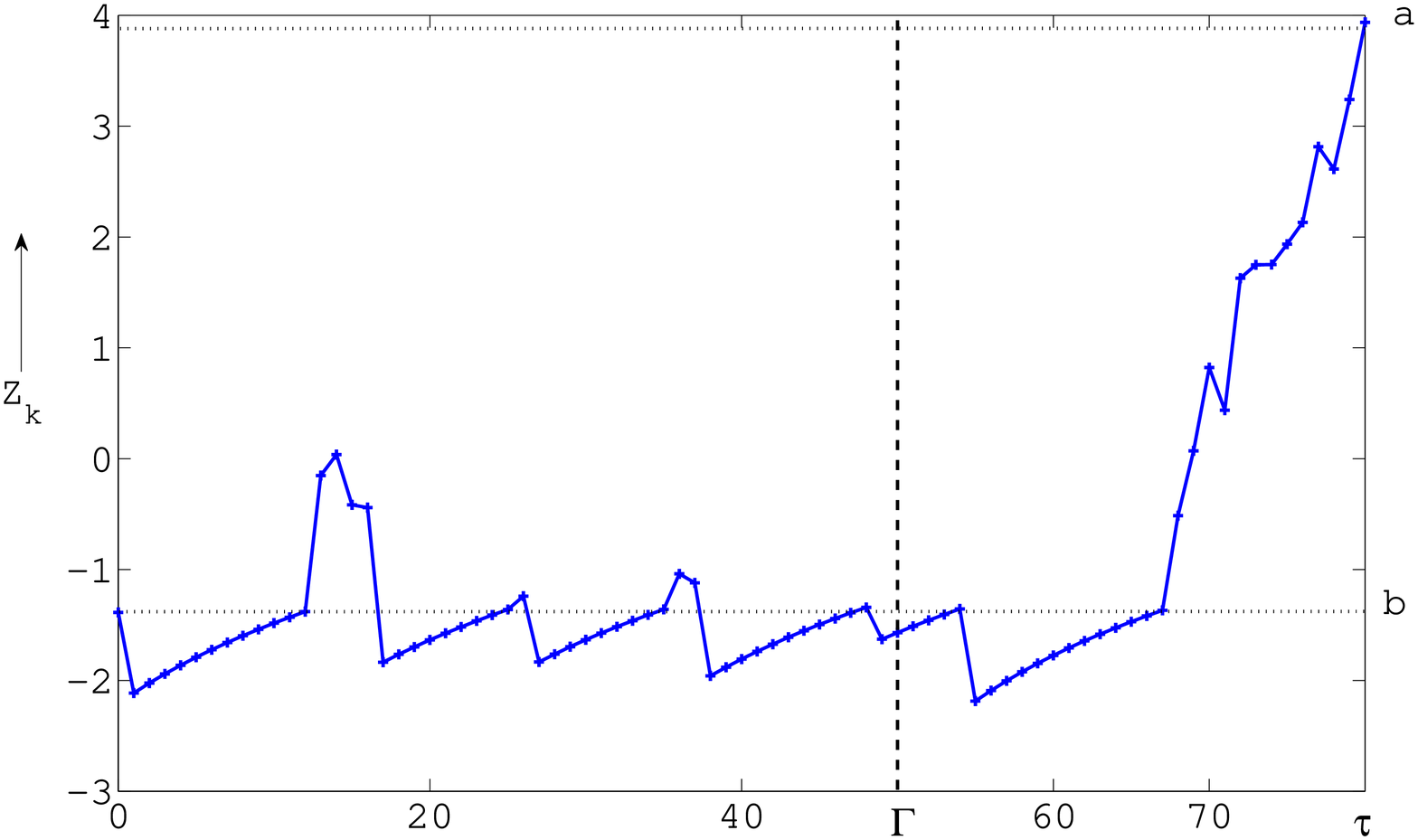}
\caption{Evolution of $Z_k$ for $f_0 \sim {\cal N}(0,1)$, $f_1 \sim {\cal N}(0.5,1)$, and $\rho=0.01$, with thresholds $a=3.89$, and $b=-1.38$, corresponding to
the $p_k$ thresholds $A=0.98$ and $B=0.2$, respectively. Also $Z_0=b$.}
\label{fig:Zkevolution}
\end{figure}

%Define, \[\tau_c = \inf\left\{ k\geq 1: Z_k > c\right\}.\]
From Fig. \ref{fig:Zkevolution} again, each time $Z_k$ crosses $b$ from below, it can either increase to $a$ (point $\tau$), or it can go below $b$ and approach $b$ monotonically from below, at which time it faces a similar
set of alternatives. Thus the passage to threshold $a$ possibly involves multiple cycles of the evolution of $Z_k$ below $b$. We will show in Section~\ref{sec:Delay} that after the change point $\Gamma$,
following a finite number of cycles below $b$, $Z_k$ grows up to cross $a$, and the time spent on the cycles below $b$ is insignificant as compared to $\tau-\Gamma$, as $a \to \infty$. In fact we show that, asymptotically, the time to reach $a$ is equal to the time taken by the classical Shiryaev algorithm to reach $a$.
(Note that for the classical Shiryaev algorithm
the evolution of $Z_k$ would be based on \eqref{eq:ZkrecursionTake}).

When $Z_k$ crosses $a$ from below, it does so with an overshoot. Overshoots play a significant role in the performance of many sequential algorithms (see \cite{Siegmund},
\cite{VVV2005}) and they are central to the performance of $\gamma(a,b)$ as well. In Section \ref{sec:PFAAnalysis}, we show that
PFA depends on the threshold $a$ and the overshoot $(Z_{\tau}-a)$ as $a\to \infty$, but is \textit{not} a function of the threshold $b$.
%The overshoot distribution is also used to approximate the time for $Z_k$ to move from $c$ to $a$.

The number of observations taken during the detection process is the total time spent by $Z_k$ between $b$ and $a$. As $a \to \infty$, $Z_k$ crosses $a$ only after change
point $\Gamma$, with high probability. The total number of observations taken can thus be divided in to two parts: the part taken before $\Gamma$ (ANO), which is the fraction of time $Z_k$
is above $b$ (and hence depends only on $b$), and the part taken after $\Gamma$. %We obtain tight upper bounds for both these terms.
In Section \ref{sec:Energy} we show that, asymptotically, the average number of observations taken after $\Gamma$ is approximately equal to the delay itself.
%, thus justifying not including these observations in the ANO constraint.

In Section \ref{sec:Approx_AND_Numerical} we provide approximations using which the asymptotic expressions can be computed and provide numerical results to demonstrate that under various scenarios, for limiting as well as moderate values of $a$, $b$, and $\rho$,
our  asymptotic expressions for ADD, PFA and ANO provide good approximations. In Section \ref{sec:OptimalityofTwoThreshold} we use the asymptotic expressions for ADD and PFA to
show asymptotic optimality of $\gamma(a,b)$.

We begin our analysis by first obtaining the asymptotic overshoot distribution for $(Z_{\tau}-a)$ using nonlinear renewal theory \cite{Siegmund,Woodroofebook82}.
As mentioned above, this will be critical to the PFA analysis. For convenience of reference, in Table \ref{Tab:Glosaary},
 we provide a glossary of important terms used in this paper.

\begin{table} [htbp]
 \centering
 \centering
 {\scriptsize
 \caption{Glossary}
\label{Tab:Glosaary}
\begin{tabular}{|ll|ll|}
\hline
\cline{1-4}
\textbf{Symbol}&\textbf{Definition/Interpretation}&\textbf{Symbol}&\textbf{Definition/Interpretation}\\
\hline
ADD&Average detection delay                                                                                &$\lambda$&Starting at $b$, first time $Z_k$ is outside $[b,a)$\\
PFA&Probability of false alarm                                                                             &$\Lambda$&Starting at $b$, first time $Z_k$ crosses $a$ \\
$\mathrm{ANO}$&Average \# observations used before change                                                &&or crosses $b$ from below\\
$\mathrm{ANO}_1$&Average \# observations used after change                                                &$\mathrm{ADD}^s$&Starting at $b$, time for $Z_k$ to reach $a$ under $\Prob_1$, when \\
$\{X_k\}$&Observation sequence                                                                             && $Z_k$ is reset to $b$ each time it crosses $b$ from below\\
$p_k$&\textit{a posteriori} probability of change                                                          &$\lambda(x)$&Starting at $x\geq b$, first time $Z_k$ is outside $[b,a)$\\
$Z_k$&$\log\frac{p_k}{1-p_k}=\sum_{i=1}^k Y_i + \eta_k$,                                                   &$\Lambda(x)$&Starting at $x\geq b$, first time $Z_k$ crosses $a$\\
$\tau$ &First time for $p_k$ to cross $A$ or                                                               &&or crosses $b$ from below\\
&first time for $Z_k$ to cross $a=\log\frac{A}{1-A}$                                                       &$\hat{\lambda}$&Starting at $b$, first time $Z_k<b$ with $a=\infty$\\
$\{\eta_k\}$&Slowly changing sequence                                                                   &$\hat{\lambda}(x)$&Starting at $x\geq b$, first time $Z_k<b$ with $a=\infty$\\
$R(x), \ \bar{r}$&Asymptotic distribution and mean of overshoot                                         &$T_b$&Time spent by $Z_k$ below $b$, after $\Gamma$, when $\tau \geq \Gamma$\\
&when $\sum_{i=1}^k Y_i$ crosses a large threshold                                               &$\tilde{\Lambda}^x$&Starting at $x\geq b$, first time $Z_k>a$, or crosses $b$ from \\
$t(x,y)$&Time for $Z_k$ to reach $y$ starting at $x$ using \eqref{eq:ZkrecursionSkip}                      &&below, or is stopped by occurrence of change\\
$\nu(x,y)$&Time for $Z_k$ to reach $y$ starting at $x$ using \eqref{eq:ZkrecursionTake}       &$\delta^x$&The fraction of time $Z_k$ is above $b$, when stopped by $\tilde{\Lambda}^x$\\
&also, time for Shiryaev algorithm to reach $y$ starting at $x$                                &$\tilde{\nu}_b$ ($\hat{\nu}_b$)&Starting at $b$, time for $Z_k$ to reach $a$, when $Z_k$ is \\
$\nu_b,\  \nu_0$&$\nu(b,a)$ and $\nu(-\infty,a)$                                                                 &&reflected at $b$ (reset to $b$ when it crosses $b$ from below)\\
\hline
\end{tabular}}
 \end{table}

In what follows, we
use $\mathrm{E}_\ell$ and
$\mathrm{P}_\ell$ to denote, respectively, the expectation and probability measure when change happens at time $\ell$.
We use $\Expect_\infty$ and $\Prob_\infty$ to denote, respectively, the expectation and probability measure when the entire sequence $\{X_n\}$ is i.i.d.
with density $f_0$.
Note that, $g(x)=o(1)$ as $x \to x_0$ is used to denote that $g(x)\to 0$ in the specified limit.

\subsection{Asymptotic overshoot}
\label{sec:Overshoot}
In this section we characterize the overshoot distribution of $Z_k$ as it crosses $a$ as $a\to \infty$. %For this analysis, we can therefore assume that $Z_k < c$.
In analyzing the trajectory of $Z_k$, it useful to allow for arbitrary starting point $Z_0$ (shifting the time axis).  We first combine the recursions in
(\ref{eq:ZkrecursionTake}) and (\ref{eq:ZkrecursionSkip}) to get:
\begin{eqnarray*}
Z_{k+1} = Z_k + \indic_{\{Z_k \geq b\}}\log L(X_{k+1}) + |\log(1-\rho)| + \log\left(1 + e^{-Z_k}\rho\right).
\end{eqnarray*}
By defining $Y_k = \log L(X_k) + |\log (1-\rho)|$ and expanding the above recursion, we can write an expression for $Z_n$:
\begin{eqnarray}
\label{eq:ZnEqSnEtan}
Z_n &=& \sum_{k=1}^n Y_k + \log\left(e^{Z_0} + \rho\right) + \sum_{k=1}^{n-1} \log\left(1 + e^{-Z_k}\rho\right) - \sum_{k=1}^n\indic_{\{Z_k < b\}} \log L(X_{k}) \nonumber\\
&=& \sum_{k=1}^n Y_k + \eta_n.
\end{eqnarray}
Here $\eta_n$ is used to represent all terms other than the first in the equation above:
\begin{equation}
\label{eq:etan}
\eta_n= \log\left(e^{Z_0} + \rho\right)  + \sum_{k=1}^{n-1} \log\left(1 + e^{-Z_k}\rho\right) - \sum_{k=1}^n\indic_{\{Z_k < b\}} \log L(X_{k}).
\end{equation}
As defined in \cite{Siegmund}, $\eta_n$ is a \textit{slowly changing} sequence if
\begin{equation}
\label{eq:SlowlyCngCond1}
n^{-1} \max\{ |\eta_1|, \ldots, |\eta_n|\} \xrightarrow[i.p.]{n\to\infty} 0,
\end{equation}
and for every $\epsilon > 0$, there exists $n^*$ and $\delta>0$ such that for all $n\geq n^*$
\begin{equation}
\label{eq:SlowlyCngCond2}
\mathrm{P} \{\max_{1\leq k\leq n\delta} |\eta_{n+k} - \eta_{n}| > \epsilon\} < \epsilon.
\end{equation}
If indeed $\{\eta_n\}$ is a slowly changing sequence, then the distribution of $Z_{\tau}- a$, as $a\to \infty$, is equal to the asymptotic distribution of the overshoot
when the random walk $\sum_{k=1}^n Y_k$ crosses a large positive boundary. We have the following result.

\begin{theorem}
\label{thm:AsympOvershootDist}
Let $R(x)$ be the asymptotic distribution of the overshoot when the random walk $\sum_{k=1}^n Y_k$ crosses a large positive boundary under $\Prob_1$. Then
for fixed $\rho$ and $b$, under $\Prob_1$, we have the following:
\begin{enumerate}
\item $\{\eta_n\}$ is a slowly changing sequence.
\item $R(x)$ is the distribution of $Z_{\tau}- a $ as $a\to \infty$, i.e.,
\begin{equation}
\label{eq:AsympOvershootDist}
\lim_{a\to \infty} \mathrm{P}\left[Z_{\tau}- a \leq x | \tau \geq \Gamma \right] = R(x).
\end{equation}
\end{enumerate}
\end{theorem}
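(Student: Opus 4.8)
To prove Theorem~\ref{thm:AsympOvershootDist} I would take the two assertions in order: first the qualitative claim that $\{\eta_n\}$ is slowly changing (assertion~1), after which the identification of the limiting overshoot law (assertion~2) is a direct appeal to nonlinear renewal theory \cite{Siegmund,Woodroofebook82}. For assertion~1 the plan is to prove the stronger fact that, under $\Prob_1$, the sequence $\eta_n$ in \eqref{eq:etan} converges almost surely to a finite random variable $\eta_\infty$. This suffices: if $\eta_n\to\eta_\infty$ a.s.\ then $M\defeq\sup_n|\eta_n|<\infty$ a.s., so $n^{-1}\max_{1\le k\le n}|\eta_k|\le M/n\to0$ a.s.\ and hence in probability, which is \eqref{eq:SlowlyCngCond1}; and $G_n\defeq\sup_{j\ge n}|\eta_j-\eta_\infty|\downarrow0$ a.s., so, fixing $\delta=1$, $\max_{1\le k\le n\delta}|\eta_{n+k}-\eta_n|\le 2G_n$ and therefore $\Prob\{\max_{1\le k\le n\delta}|\eta_{n+k}-\eta_n|>\epsilon\}\le\Prob\{2G_n>\epsilon\}\to0$, which gives \eqref{eq:SlowlyCngCond2}.

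The core step is thus to show $Z_n\to+\infty$ $\Prob_1$-a.s., equivalently $1-p_n\to0$ $\Prob_1$-a.s. I would obtain this from the one-step inequality
\[
\Expect_1\!\left[\,1-p_{n+1}\,\big|\,I_n\,\right]\;\le\;(1-\rho)\,(1-p_n),
\]
valid whether or not $X_{n+1}$ is taken. If $X_{n+1}$ is skipped this is an equality by \eqref{eq:recursionSkip}. If $X_{n+1}$ is taken, then by \eqref{eq:recursionTake} and $1-\Phi^{(0)}(p_n)=(1-\rho)(1-p_n)$ we have $1-p_{n+1}=(1-\rho)(1-p_n)\big/\big(\Phi^{(0)}(p_n)L(X_{n+1})+1-\Phi^{(0)}(p_n)\big)$, so taking $\Expect_1[\,\cdot\mid I_n]$ and using $f_1=Lf_0$ to rewrite the resulting integral against $f_0$ gives $\Expect_1[1-p_{n+1}\mid I_n]=(1-\rho)(1-p_n)\,\Expect_\infty[\phi(L(X))]$, where $\phi(\ell)=\ell/(q\ell+1-q)$ with $q=\Phi^{(0)}(p_n)\in(0,1)$; since $\phi$ is concave on $[0,\infty)$ and $\Expect_\infty[L(X)]=1$, Jensen's inequality gives $\Expect_\infty[\phi(L(X))]\le\phi(1)=1$. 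Iterating, $\Expect_1[1-p_n]\le(1-\rho)^{n}(1-\pi_0)$, whence $\sum_n(1-p_n)<\infty$ a.s.\ and so $p_n\to1$, $Z_n\to\infty$ a.s. Then $\sigma\defeq\sup\{k\ge0:Z_k<b\}$ is a.s.\ finite, so the sum $\sum_k\indic_{\{Z_k<b\}}\log L(X_k)$ in \eqref{eq:etan} is eventually constant and a.s.\ finite; moreover $Z_k\ge b$ for $k\ge\sigma+1$, so $Z_k\ge Z_{\sigma+1}+\sum_{j=\sigma+2}^{k}Y_j$ grows at least linearly because the drift $\mu\defeq\Expect_1[\log L(X_1)]+|\log(1-\rho)|$ is positive, and therefore $\sum_{k>\sigma}\log(1+\rho e^{-Z_k})\le\sum_{k>\sigma}\rho e^{-Z_k}<\infty$ a.s. Both sums in \eqref{eq:etan} converge, so $\eta_n\to\eta_\infty$ a.s., proving assertion~1.

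For assertion~2, with $\{\eta_n\}$ slowly changing the process $Z_n=\sum_{k=1}^nY_k+\eta_n$ is a perturbed random walk of precisely the form handled by the nonlinear renewal theorem \cite{Siegmund,Woodroofebook82}. I would check its hypotheses: the $Y_k$ are i.i.d.\ under $\Prob_1$ with finite positive mean $\mu$ (finiteness of $\Expect_1|\log L(X)|$ being assumed throughout); the law of $Y_k$ is non-arithmetic (otherwise one invokes the lattice version); $\{\eta_n\}$ is slowly changing (assertion~1); and $\tau=\inf\{n\ge1:Z_n>a\}$ is a.s.\ finite with $\tau\to\infty$ as $a\to\infty$, which is clear from $Z_n/n\to\mu>0$. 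The theorem then yields that $Z_\tau-a$ converges in distribution, as $a\to\infty$, to the limiting overshoot distribution of the random walk $\{\sum_{k=1}^nY_k\}$ over a high boundary — a quantity independent, by the renewal theorem, of the level and of the starting point — that is, to $R$. Finally, under $\Prob_1$ the change time is $\Gamma=1$ and $\tau\ge1$ by definition, so $\{\tau\ge\Gamma\}$ has probability one and the conditioning in \eqref{eq:AsympOvershootDist} may be dropped; this establishes \eqref{eq:AsympOvershootDist}.

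The main obstacle is the almost-sure convergence of $\eta_n$ in assertion~1, where the real subtlety is the apparent circularity between establishing $Z_n\to\infty$ and establishing that $\eta_n$ is well behaved: the indicator sum in \eqref{eq:etan} stabilises only once $Z_k$ makes its last descent below $b$ at a finite epoch, yet the most natural route to $Z_n\to\infty$ would want $\eta_n=o(n)$ as an input. The action-independent inequality above severs this loop, since it bounds $1-p_n$ directly from the Bayes recursions \eqref{eq:recursionSkip}--\eqref{eq:recursionTake}, with no prior knowledge of $\eta_n$. A secondary, essentially bookkeeping, point is matching the exact hypotheses of the particular form of the nonlinear renewal theorem one cites — integrability and non-arithmeticity of $\log L(X)$ under $f_1$ — which are standard standing assumptions in this literature; once they are in force, the a.s.\ convergence of $\eta_n$ comfortably supplies all the regularity (convergence in distribution, uniform integrability, and the like) required by the various formulations of the theorem.
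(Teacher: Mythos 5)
Your proof of assertion~1 is correct but follows a genuinely different route from the paper's. The paper verifies the two defining conditions \eqref{eq:SlowlyCngCond1}--\eqref{eq:SlowlyCngCond2} directly: it imports from \cite{VVV2005} the fact that the $b=-\infty$ version of $\eta_n$ converges a.s., then splits on the event that $Z_k$ ever dips below $b$ and argues that after the (a.s.\ finite) last up-crossing $L_Z$ the indicator term vanishes and $\eta_n$ reverts to the $b=-\infty$ behaviour. You instead prove the stronger statement that $\eta_n$ converges a.s.\ to a finite limit, which indeed implies both conditions, and you supply the key input $Z_n\to\infty$ $\Prob_1$-a.s.\ from scratch via the action-independent supermartingale bound $\Expect_1[1-p_{n+1}\mid I_n]\le(1-\rho)(1-p_n)$ (the Jensen step with $\phi(\ell)=\ell/(q\ell+1-q)$ checks out, and the Borel--Cantelli-type conclusion from $\sum_n\Expect_1[1-p_n]<\infty$ is sound). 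This is more self-contained than the paper's argument, which takes $Z_k\to\infty$ a.s.\ essentially for granted; your observation that the supermartingale inequality breaks the circularity between controlling $\eta_n$ and establishing $Z_n\to\infty$ is a real contribution of your route.

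There is, however, a gap in assertion~2. You dispose of the conditioning in \eqref{eq:AsympOvershootDist} by declaring that under $\Prob_1$ one has $\Gamma=1$ and $\{\tau\ge\Gamma\}$ is sure. But the probability in \eqref{eq:AsympOvershootDist} is the full Bayesian measure $\mathrm{P}$ with $\Gamma$ geometric, not $\Prob_1$ --- this is how the result is consumed in Lemma~\ref{lem:PFAeq} and Theorem~\ref{thm:PFA}, where $\mathrm{E}[e^{-(Z_\tau-a)}\mid\tau\ge\Gamma]$ is an expectation over the random change point. The paper's proof therefore does two additional things you omit: (i) it establishes the overshoot limit under each $\Prob_\ell$ for the post-change process started at the \emph{random} point $Z_0=Z_\Gamma=Z_\ell$ (your supermartingale argument does extend to this, since for $n\ge\ell$ the observations are $f_1$ and $p_{\ell-1}<1$ a.s., but you need to say so, and you need to handle the conditioning on the positive-probability event $\{\tau\ge\ell\}$); and (ii) it assembles the mixture via $\mathrm{P}[Z_\tau-a\le x\mid\tau\ge\Gamma]=\sum_{\ell}\mathrm{P}_\ell[Z_\tau-a\le x\mid\tau\ge\ell]\,\mathrm{P}(\Gamma=\ell\mid\tau\ge\Gamma)$ and dominated convergence. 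None of this is deep given your machinery, but as written your argument proves a strictly weaker statement than the one the rest of the paper relies on.
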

\begin{IEEEproof}
When $b=-\infty$, $Z_k$ evolves as in the classical Shiryaev algorithm statistic, and it is easy to see that in this case:
\begin{eqnarray*}
\eta_n &=& \left[\log\left(e^{Z_0} + \rho\right) + \sum_{k=1}^{n-1} \log\left(1 + e^{-Z_k}\rho\right)\right] \\
&=& \log\left[e^{Z_0} + \sum_{k=0}^{n-1} \rho (1-\rho)^k \prod_{i=1}^k \frac{f_0(X_i)}{f_1(X_i)}\right]. \nonumber\\
\end{eqnarray*}
It was shown in  \cite{VVV2005} that this $\{\eta_n\}$ sequence (for $b = -\infty$), with $Z_0=-\infty$, is a slowly changing sequence.
It is easy to show that $\{\eta_n\}$ is a slowly changing sequence even if $Z_0$ is a random variable. Also, if $L_Z$ is the last time $Z_k$
crosses $b$ from below, then note that, after $L_Z$, the last term $\sum_{k=1}^n\indic_{\{Z_k < b\}} \log L(X_{k})$ in \eqref{eq:etan} vanishes,
and $\eta_n$ in \eqref{eq:etan} behaves like the $\eta_n$ for $b=-\infty$. We prove the theorem using these observations. The detailed proof is given in the appendix to this
section.
\end{IEEEproof}

\subsection{$\mathrm{PFA}$ Analysis}
\label{sec:PFAAnalysis}

%We know from equation (\ref{eq:PFAUPPERBND}) that PFA can be written as $\mathrm{E}[1-p_{\tau}]$.
We first obtain an expression for PFA as a function of the overshoot when $Z_k$ crosses $a$.
\begin{lemma}
\label{lem:PFAeq}
For fixed $\rho$ and $b$,
\begin{eqnarray*}
\mathrm{PFA} &=& \mathrm{E}[1-p_{\tau}] = e^{-a} \mathrm{E}[e^{-(Z_{\tau} - a)}|  \tau \geq \Gamma](1+o(1)) \ \ \ \ \ \  \mbox{ as } a \ \to \infty.
\end{eqnarray*}
\end{lemma}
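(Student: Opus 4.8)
The plan is to reduce $\mathrm{PFA}$ to an expectation of $e^{-Z_\tau}$ and then peel off the leading factor $e^{-a}$. First I would record the standard identity $\mathrm{PFA} = \Prob(\tau<\Gamma) = \Expect[1-p_\tau]$. Since $p_k$ is (a version of) $\Prob\{\Gamma\le k \mid I_k\}$ and $\tau$ is a stopping time with respect to $\{I_k\}$ that is finite almost surely, conditioning on the $I_k$-measurable events $\{\tau=k\}$ gives $\Expect[1-p_\tau] = \sum_k \Prob(\tau=k,\,\Gamma>k) = \Prob(\Gamma>\tau) = \mathrm{PFA}$; this is exactly the first equality in the lemma.

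Next I would pass to the log-ratio variable. From $Z_k = \log\frac{p_k}{1-p_k}$ we have $1-p_\tau = \frac{e^{-Z_\tau}}{1+e^{-Z_\tau}}$, and at the stopping time $Z_\tau > a$, so $0 < e^{-Z_\tau}\le e^{-a}$ and hence $\frac{1}{1+e^{-Z_\tau}} = 1 + O(e^{-a})$ uniformly over the sample space. Therefore $1-p_\tau = e^{-Z_\tau}\bigl(1+O(e^{-a})\bigr)$ pathwise, and taking expectations yields $\mathrm{PFA} = \Expect[e^{-Z_\tau}]\,(1+o(1))$ together with the a priori bound $\mathrm{PFA}\le e^{-a}$, which in particular shows $\mathrm{PFA}\to 0$ as $a\to\infty$ (used below). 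Writing $e^{-Z_\tau} = e^{-a}e^{-(Z_\tau-a)}$ gives $\mathrm{PFA} = e^{-a}\Expect[e^{-(Z_\tau-a)}]\,(1+o(1))$.

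The last step is to swap the unconditional expectation for the one conditioned on $\{\tau\ge\Gamma\}$. I would split $\Expect[e^{-(Z_\tau-a)}] = \Prob(\tau\ge\Gamma)\,\Expect[e^{-(Z_\tau-a)}\mid\tau\ge\Gamma] + \Prob(\tau<\Gamma)\,\Expect[e^{-(Z_\tau-a)}\mid\tau<\Gamma]$. The overshoot $Z_\tau-a$ is nonnegative, so $e^{-(Z_\tau-a)}\le 1$ and both conditional expectations lie in $[0,1]$; combined with $\Prob(\tau<\Gamma)=\mathrm{PFA}$ and $\Prob(\tau\ge\Gamma)=1-\mathrm{PFA}=1+o(1)$, this gives $\mathrm{PFA} = e^{-a}\Expect[e^{-(Z_\tau-a)}\mid\tau\ge\Gamma]\,(1+o(1)) + e^{-a}\,O(\mathrm{PFA})$. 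Since $e^{-a}\,O(\mathrm{PFA}) = o(\mathrm{PFA})$, moving that term to the left and dividing through by $1+o(1)$ gives the stated asymptotic. The whole argument is short; the only places warranting care are the legitimacy of the optional-sampling manipulation in the first identity (finiteness of $\tau$, correct version of the conditional probability) and the bookkeeping that the false-alarm contribution $e^{-a}\,O(\mathrm{PFA})$ is genuinely of smaller order than $\mathrm{PFA}$ so that it can be absorbed — I do not expect any substantive obstacle. Note also that Theorem~\ref{thm:AsympOvershootDist} is not needed for this lemma itself; it enters only afterwards, when one replaces $\Expect[e^{-(Z_\tau-a)}\mid\tau\ge\Gamma]$ by its limit $\int e^{-x}\,dR(x)$.
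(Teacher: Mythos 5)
Your proposal is correct and follows essentially the same route as the paper's proof: sandwiching $1-p_\tau=\frac{e^{-Z_\tau}}{1+e^{-Z_\tau}}$ between $e^{-Z_\tau}$ and $e^{-Z_\tau}/(1+e^{-a})$ using $Z_\tau>a$, factoring out $e^{-a}$, and then splitting the expectation on $\{\tau\ge\Gamma\}$ versus $\{\tau<\Gamma\}$ with the latter contribution bounded via $\mathrm{P}(\tau<\Gamma)\le e^{-a}$. Your explicit bookkeeping that the false-alarm term is $e^{-a}\,O(\mathrm{PFA})=o(\mathrm{PFA})$ is, if anything, slightly more careful than the paper's.
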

\begin{IEEEproof}See the appendix for the proof.
\end{IEEEproof}

From Lemma \ref{lem:PFAeq}, it is evident that PFA depends on the overshoot when $Z_k$ crosses $a$ as $a\to \infty$.
%This overshoot in turn depends on the overshoot of $Z_k$
%when it crosses $c$.
Since the overshoot has an asymptotic distribution (Theorem \ref{thm:AsympOvershootDist}) that depends
only on densities $f_0$, $f_1$ and prior $\rho$, and is independent of $b$, it is natural to expect that as $a\to \infty$, PFA is completely characterized by the asymptotic
distribution $R(x)$ and is not a function of the threshold $b$. This is indeed true and is established in the following theorem.

\begin{theorem}
\label{thm:PFA}
For a fixed $b$ and $\rho$,
\begin{equation}
\label{eq:PFACequalA}
\mathrm{PFA}(\gamma(a,b)) = \left(e^{-a}\int_{0}^\infty e^{-x} dR(x)\right) (1+o(1))  \mbox{ as } a \to \infty.
\end{equation}
\end{theorem}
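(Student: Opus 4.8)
The plan is to combine Lemma~\ref{lem:PFAeq}, which reduces PFA to the expected exponentiated overshoot $\mathrm{E}[e^{-(Z_\tau-a)}\mid\tau\geq\Gamma]$, with Theorem~\ref{thm:AsympOvershootDist}, which identifies the limiting law of that overshoot as the renewal-theoretic distribution $R$. The entire content then amounts to passing from convergence in distribution of $Z_\tau - a$ (conditioned on $\tau\geq\Gamma$) to convergence of its $e^{-x}$-moment, and this passage is immediate because $e^{-x}$ is bounded and continuous on $[0,\infty)$.

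Concretely, the steps are as follows. First I would invoke Lemma~\ref{lem:PFAeq} to write
\[
\mathrm{PFA}(\gamma(a,b)) = e^{-a}\,\mathrm{E}\!\left[e^{-(Z_\tau-a)}\mid\tau\geq\Gamma\right](1+o(1)),\qquad a\to\infty .
\]
Second, by Theorem~\ref{thm:AsympOvershootDist} the conditional distribution of $Z_\tau-a$ given $\tau\geq\Gamma$ converges to $R$ on $[0,\infty)$ as $a\to\infty$; since $x\mapsto e^{-x}$ is continuous and bounded by $1$, the definition of weak convergence (equivalently, bounded convergence applied to the pointwise limit of the CDFs) gives
\[
\lim_{a\to\infty}\mathrm{E}\!\left[e^{-(Z_\tau-a)}\mid\tau\geq\Gamma\right] = \int_0^\infty e^{-x}\,dR(x).
\]
Since $e^{-x}>0$ on $[0,\infty)$ and $R$ is a proper distribution, the right-hand side is a strictly positive constant, so this limit can be rewritten as $\mathrm{E}[e^{-(Z_\tau-a)}\mid\tau\geq\Gamma] = \big(\int_0^\infty e^{-x}\,dR(x)\big)(1+o(1))$. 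Substituting into the displayed expression from Lemma~\ref{lem:PFAeq} and collapsing the product of the two $(1+o(1))$ factors into a single $(1+o(1))$ yields \eqref{eq:PFACequalA}.

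The genuinely hard work is upstream and may be assumed here: establishing that $\{\eta_n\}$ is slowly changing so that nonlinear renewal theory applies (Theorem~\ref{thm:AsympOvershootDist}), and deriving the $e^{-a}$ scaling of $\mathrm{E}[1-p_\tau]$ (Lemma~\ref{lem:PFAeq}). Within the present argument the only point deserving a moment's attention is that we do \emph{not} need uniform integrability of the overshoots: the test function $e^{-x}$ is bounded, so weak convergence alone suffices, and any probability mass of $Z_\tau-a$ escaping toward $+\infty$ is harmless since $e^{-x}\to 0$. One should also note that $R$ being non-defective keeps $\int_0^\infty e^{-x}\,dR(x)$ bounded away from $0$, which is what legitimizes rewriting the convergence in the multiplicative $(1+o(1))$ form used in the statement.
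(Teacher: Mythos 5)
Your proposal is correct and is essentially the argument the paper intends: Theorem~\ref{thm:PFA} is obtained by combining Lemma~\ref{lem:PFAeq} with the overshoot convergence of Theorem~\ref{thm:AsympOvershootDist}, passing to the limit of $\mathrm{E}[e^{-(Z_\tau-a)}\mid\tau\geq\Gamma]$ via weak convergence against the bounded continuous test function $e^{-x}$. In fact the paper's appendix only spells out the proof of Lemma~\ref{lem:PFAeq}, so your write-up supplies exactly the (routine) glue the authors leave implicit, including the correct observations that boundedness of $e^{-x}$ makes uniform integrability unnecessary and that positivity of $\int_0^\infty e^{-x}\,dR(x)$ justifies the multiplicative $(1+o(1))$ form.
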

\begin{IEEEproof}The proof is provided in the appendix.
\end{IEEEproof}

%\begin{remark}
%The parameter $\rho$ is taken to zero for \eqref{eq:PFA}, only to get the exact expression.
%The fact that $\mathrm{PFA}$ is not a function of $b$
%as $c,a\to \infty$, follows directly from Lemma \ref{lem:PFAeq} as well.
%\end{remark}
%

\subsection{Delay Analysis}
\label{sec:Delay}

The PFA for $\gamma(a,b)$ have the following bound:
\begin{equation}
\label{eq:PFAUPPERBND}
\text{PFA} = \mathrm{E}[1-p_{\tau}] \leq 1-A = \frac{1}{1+e^a} \leq e^{-a}.
\end{equation}
%From \eqref{eq:PFA} it is clear that that this upper bound is tight if the gap between $c$ and $a$ is significant enough:
%$R(a-c)\approx 1$ and $\mathrm{PFA} \approx e^{-a}$.

Using this upper bound we can show that the ADD of $\gamma(a,b)$ is given by:
\begin{eqnarray}
\label{eq:CondEdd}
\text{ADD} &=& \mathrm{E}\left[(\tau - \Gamma)^+\right] \nonumber\\
&=& \mathrm{E}[\tau-\Gamma| \tau\geq \Gamma](1 + o(1)) \  \mbox{ as } a \to \infty.
\end{eqnarray}
%\smallskip
%\vspace{1cm}
%\subsubsection{Asymptotic expression for $\mathrm{E}[\tau_c-\Gamma| \tau_c \geq \Gamma]$}
In the following we provide two different expressions for $\mathrm{E}[\tau-\Gamma| \tau \geq \Gamma]$. The first one is obtained by keeping $b$ fixed and taking $\rho\to 0$.
This expression will be used to get accurate delay estimates for $\gamma(a,b)$ in Section \ref{sec:Approx_AND_Numerical}
%(Note that with $c=a$, $\mathrm{E}[\tau_c-\Gamma| \tau_c \geq \Gamma]$ gives the average delay for $\gamma(a,b)$).

Next, we will provide another asymptotic expression for $\mathrm{E}[\tau-\Gamma| \tau \geq \Gamma]$ for a fixed $b$, $\rho$ and as $a\to \infty$. We show that
in this limit, $\mathrm{E}[\tau-\Gamma| \tau \geq \Gamma]$ converges to the Shiryaev delay. This fact will be used to prove the asymptotic optimality of $\gamma(a,b)$
in Section \ref{sec:OptimalityofTwoThreshold}.

It was discussed in reference to Fig. \ref{fig:Zkevolution} that  each time $Z_k$ crosses $b$ from below, it faces two alternatives, to cross $a$
without ever coming back to $b$ or to go below $b$ and cross it again from below. It was mentioned that the passage to the threshold $a$ is through
multiple such cycles. Motivated by this we define the following stopping times $\lambda$ and $\Lambda$:
\begin{equation}
\label{eq:TwoSidedShiryaev}
\lambda  \defeq \inf\{k\geq 1: Z_k \notin [b, a), Z_0=b \},
\end{equation}
and
\begin{equation}
\label{eq:ExtendedTest}
\Lambda  \defeq \inf\{k\geq 1: Z_k >a \ \mbox{ or } \exists \ k \ s.t. \ Z_{k-1}<b \mbox{ and } Z_k\geq b\ , Z_0=b \}.
\end{equation}
Let $t(x,y)$ be the constant time taken by $Z_k$ to move from $Z_0=x$ to $y$ using the recursion (\ref{eq:ZkrecursionSkip}), i.e.
\begin{equation}
\label{eq:tXY}
t(x,y) \defeq \inf\{k\geq 0: Z_k > y, Z_0=x, ~ x,y \notin [b,a)\}.
\end{equation}
Then, we can write $\Lambda$ as a function of $\lambda$ using \eqref{eq:tXY}:
\begin{equation*}
\Lambda = (\lambda + t(Z_\lambda,b))\indic_{\{Z_\lambda < b\}} + \lambda \, \indic_{\{Z_\lambda > a\}} = \lambda + t(Z_\lambda,b) \indic_{\{Z_\lambda < b\}}.
\end{equation*}
The significance of these stopping times is as follows.
If we start the process at $Z_0=b$ and \textit{reset $Z_k$ to $b$ each time it crosses $b$ from below},
then the time taken by $Z_k$ to move from $b$ to $a$ is the sum of a finite but random number of random variables with distribution of $\Lambda$,
say $\Lambda_1, \Lambda_2, \ldots, \Lambda_N$. For $i =1, \ldots, N-1$, $Z_{\Lambda_i} < b$, and $Z_{\Lambda_N} > a$. Thus the time
to reach $a$ in this case is $\Expect_1\left[ \sum_{k=1}^{N} \Lambda_k\right]$.
Let
\[\mathrm{ADD}^s \defeq \Expect_1\left[ \sum_{k=1}^{N} \Lambda_k\right].\]

The behavior of the delay path depends on $Z_\Gamma$, the value of $Z_k$ at the change point $\Gamma$, and how $Z_k$ evolves after that point.
We use $\{Z_k \nearrow b\}$ to indicate that $Z_k$ approaches $b$ from below for some $k > \Gamma$, i.e. $\exists k > \Gamma, s.t., Z_{k-1}<b, Z_k\geq b$. and use $\{Z_k \nearrow a\}$ to represent the event that $Z_k$
crossed $a$ without ever coming back to $b$, i.e., $Z_k\geq b, \forall k > \Gamma$.
 We define the following three
disjoint events:
\begin{eqnarray*}
\mathcal{A} &=& \{Z_\Gamma<b\},\\
\mathcal{B} &=& \{Z_\Gamma\geq b; Z_k \nearrow b\},\\
\mathcal{C} &=& \{Z_\Gamma\geq b; Z_k \nearrow a\}.
\end{eqnarray*}
Thus, under the event $\mathcal{A}$, the process $Z_k$ starts below $b$ at $\Gamma$, and reaches $a$ after multiple up-crossings of the threshold $b$.
Under the event $\mathcal{B}$, the process $Z_k$ starts above $b$ at $\Gamma$, and crosses $b$ before $a$. It then has multiple
up-crossings of $b$, similar to the case of event $\mathcal{A}$.
Under event $\mathcal{C}$, the process $Z_k$ starts above $b$ at $\Gamma$, and reaches $a$ without ever coming below $b$.

Also define,
\begin{equation}
\label{eq:Lambdax}
\lambda(x) = \inf\{k\geq 1: Z_k \notin [b, a), Z_0=x, b \leq x < a\},
\end{equation}
and let $\Lambda(x)$ be defined with $Z_0=x$ similar to (\ref{eq:ExtendedTest}). Thus, $\lambda$ and $\lambda(b)$ have the same distribution. Similarly, $\Lambda$ and $\Lambda(b)$ are identically distributed.

The following theorem gives an asymptotic expression for the conditional delay.
\begin{theorem}
\label{thm:ADDExact}
For a fixed values of the thresholds $a,b$, the conditional delay is given by
\begin{eqnarray}
\label{eq:CondDelayA}
\hspace{-2cm}\mathrm{E}[\tau-\Gamma| \tau \geq \Gamma] \hspace{0.2cm}
  =&& \hspace{-0.5cm}\bigg[ \mathrm{ADD}^s \ \ \mathrm{P}(\mathcal{A} \cup \mathcal{B}| \tau \geq \Gamma) \bigg.\nonumber \\
  &+& \mathrm{E}[\Lambda(Z_\Gamma) | \mathcal{C}, \tau \geq \Gamma] \ \ \mathrm{P}(\mathcal{C}| \tau \geq \Gamma) \nonumber \\
  &+& \mathrm{E}[t(Z_\Gamma,b)| \mathcal{A}, \tau \geq \Gamma] \ \ \mathrm{P}(\mathcal{A}| \tau \geq \Gamma) \nonumber \\
  &+& \bigg.\mathrm{E}[\Lambda(Z_\Gamma) | \mathcal{B}, \tau \geq \Gamma] \ \ \mathrm{P}(\mathcal{B}| \tau \geq \Gamma) \bigg] \big( 1+ o(1)\big) \mbox{ as } \rho \to 0.
\end{eqnarray}
\end{theorem}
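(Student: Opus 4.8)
The plan is to decompose $\mathrm{E}[\tau-\Gamma\mid\tau\geq\Gamma]$ according to the position of $Z_k$ at the change point $\Gamma$ and the qualitative behaviour of the path afterwards, and then to restart the analysis, via the strong Markov property of $\{Z_k\}$, at the first up-crossing of $b$ that occurs after $\Gamma$. On $\{\tau\geq\Gamma\}$ the events $\mathcal A$, $\mathcal B$, $\mathcal C$ exhaust all sample paths except those with $Z_\Gamma\geq a$, on which $\tau=\Gamma$ and $(\tau-\Gamma)=0$; therefore $\mathrm{E}[\tau-\Gamma\mid\tau\geq\Gamma]=\sum_{\mathcal E\in\{\mathcal A,\mathcal B,\mathcal C\}}\mathrm{E}[(\tau-\Gamma)\,\indic_{\mathcal E}\mid\tau\geq\Gamma]$. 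On $\mathcal A$, since $Z_\Gamma<b$ and recursion \eqref{eq:ZkrecursionSkip} is deterministic and strictly increasing, $Z_k$ reaches $b$ in exactly $t(Z_\Gamma,b)$ steps; on $\mathcal B$, $Z_k$ first returns to $b$ from below after $\Lambda(Z_\Gamma)$ steps; on $\mathcal C$, $Z_k$ never revisits $b$ and $\tau-\Gamma=\Lambda(Z_\Gamma)$. Thus on $\mathcal A$ we write $\tau-\Gamma=t(Z_\Gamma,b)+R$ and on $\mathcal B$ we write $\tau-\Gamma=\Lambda(Z_\Gamma)+R$, where $R$ is the number of steps from the first post-$\Gamma$ up-crossing of $b$ until $Z_k$ crosses $a$.

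The crux is to show that $\mathrm{E}[R\mid\mathcal E,\tau\geq\Gamma]=\mathrm{ADD}^s\,(1+o(1))$ as $\rho\to0$ for $\mathcal E\in\{\mathcal A,\mathcal B\}$. At the up-crossing instant $Z_k$ equals $b+\epsilon_\rho$, where $\epsilon_\rho\geq0$ is the overshoot produced by the single step of \eqref{eq:recursionSkip} that carries $p_k$ across $B$; because that step increases $p_k$ by at most $\rho$, we have $\epsilon_\rho=O(\rho)$ uniformly, regardless of how deep the preceding excursion below $b$ went. By the strong Markov property, conditioned on this value and on being past $\Gamma$ (so that the remaining observations are i.i.d.\ $f_1$), $R$ is the time for the chain to reach $a$ from $b+\epsilon_\rho$; this chain differs from the idealized one underlying $\mathrm{ADD}^s$ — the one that starts at $b$ and is reset to $b$ at each up-crossing of $b$, whose passage time is $\sum_{k=1}^{N}\Lambda_k$ with mean $\mathrm{ADD}^s$ — only through the $O(\rho)$ overshoots at the initial and subsequent up-crossings of $b$, so $\mathrm{E}[R\mid\mathcal E,\tau\geq\Gamma]=\mathrm{ADD}^s(1+o(1))$. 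Summing the three contributions and recombining $\mathrm{ADD}^s\,\mathrm{P}(\mathcal A\mid\tau\geq\Gamma)+\mathrm{ADD}^s\,\mathrm{P}(\mathcal B\mid\tau\geq\Gamma)=\mathrm{ADD}^s\,\mathrm{P}(\mathcal A\cup\mathcal B\mid\tau\geq\Gamma)$ yields \eqref{eq:CondDelayA}.

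The main obstacle is the continuity/uniform-integrability step: showing that perturbing the starting level and the subsequent up-crossing levels by $O(\rho)$ changes the expected passage time only by a relative factor $1+o(1)$. This calls for (i) a lower bound, bounded away from $0$ uniformly for all small $\rho$, on the probability that a cycle terminates by crossing $a$ rather than by falling below $b$, so that the number of cycles $N$ is stochastically dominated by a geometric variable whose parameter does not degenerate; and (ii) a tail bound for a single cycle length $\Lambda_k$, integrable uniformly in small $\rho$ — the part of a cycle spent below $b$ is a deterministic monotone climb whose length is an explicit function of the depth reached, and the part in $[b,a)$ has exponentially decaying tails because $\log L(X)+|\log(1-\rho)|$ has strictly positive mean under $\mathrm{P}_1$ (a standard boundary-crossing estimate; cf.\ \cite{Siegmund}). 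The same estimates give $\mathrm{ADD}^s<\infty$ and the finiteness of $\mathrm{E}[t(Z_\Gamma,b)\mid\mathcal A,\tau\geq\Gamma]$ and $\mathrm{E}[\Lambda(Z_\Gamma)\mid\mathcal E,\tau\geq\Gamma]$. Finally one checks that the contribution of $\{Z_\Gamma\geq a\}\cap\{\tau\geq\Gamma\}$ is identically zero, so that it can be dropped without affecting the $o(1)$.
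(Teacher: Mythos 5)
Your proposal is correct and follows essentially the same route as the paper: the same decomposition over $\mathcal{A}$, $\mathcal{B}$, $\mathcal{C}$, the same use of the renewal/Wald structure of the cycles between up-crossings of $b$, and the same key observation that every up-crossing of $b$ lands in $[b,\,b+O(\rho)]$ so that the post-up-crossing passage time matches $\mathrm{ADD}^s$ up to a factor $1+o(1)$ as $\rho\to 0$. The only difference is presentational: the paper realizes your ``perturbation plus uniform integrability'' step as an explicit two-sided sandwich (resetting $Z_k$ to $b$ at each up-crossing to upper-bound the passage time and to $b_1=b+|\log(1-\rho)|+\log(1+e^{-b}\rho)$ to lower-bound it, then letting $b_1\to b$), whereas you flag the continuity estimate directly --- your version is, if anything, more explicit about the uniformity the paper leaves implicit.
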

\begin{IEEEproof}
The proof is provided in the appendix.
\end{IEEEproof}
In Section \ref{sec:Approx_AND_Numerical} we will provide approximations for various terms in \eqref{eq:CondDelayA} to get an accurate estimate of ADD.
In Lemma \ref{eq:ADDsENuEqui} we provide expressions for $\mathrm{ADD}^s$.

Let $\Psi$ represent the Shiryaev recursion, i.e., updating $Z_k$ using only (\ref{eq:ZkrecursionTake}). Define
\begin{equation}
\label{eq:nuxy}
\nu(x,y) = \inf\left\{ k\geq 1: \Psi(Z_{k-1}) > y, \ \ Z_0=x\right\}.
\end{equation}
Thus, $\nu(x,y)$ is the time for the Shiryaev algorithm to reach $y$ starting at $x$.
Also, define the stopping times:
\begin{equation}
\label{eq:ShiryaevNuBtoC}
\nu_b = \nu(b,a),
\end{equation}
and
\begin{equation}
\label{eq:ShiryaevNuBtoCStartAt0}
\ \nu_0 = \nu(-\infty,a).
\end{equation}
Note that, $\nu_0$ is the stopping time for the classical Shiryaev algorithm \cite{Shiryaev63} and $\nu_b$ is its modified form which starts at $b$.
We have the following asymptotic expression.
\begin{lemma}
\label{eq:ADDsENuEqui}
For a fixed $b$ and $\rho$, $\mathrm{ADD}^s$, the average time for $Z_k$ to cross $a$ starting at $b$, under $\mathrm{P}_1$, with $Z_k$ reset to $b$ each time it crosses $b$
from below, is given by
\begin{equation}
\mathrm{ADD}^s = \frac{\mathrm{E}_1[\lambda] + \mathrm{E}_1[t(Z_\lambda, b) | \{Z_\lambda < b\}] \mathrm{P}_1(Z_\lambda < b)}{\mathrm{P}_1(Z_\lambda > a)},
\end{equation}
and is asymptotically equal to the time taken by the Shiryaev algorithm to move from $b$ to $a$, i.e.,
\begin{equation}\label{eq:ADDs}
\mathrm{ADD}^s = \mathrm{E}_1 [\nu_b] (1 + o(1)) \mbox{ as } a \to \infty.
\end{equation}
\end{lemma}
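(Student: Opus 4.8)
The plan is to proceed in two stages. First I would establish the exact (non-asymptotic) identity for $\mathrm{ADD}^s$ by a renewal/Wald-type argument, and then I would show that the right-hand side is asymptotically $\mathrm{E}_1[\nu_b]$ as $a\to\infty$ by arguing that the ``cycles below $b$'' contribute a vanishing fraction of the total time. For the exact identity, recall that $\mathrm{ADD}^s = \mathrm{E}_1\!\left[\sum_{k=1}^N \Lambda_k\right]$, where the $\Lambda_k$ are i.i.d.\ copies of $\Lambda$ (under $\mathrm{P}_1$, since after $\Gamma$ the observations are i.i.d.\ $f_1$ and the reset-to-$b$ mechanism restarts the process afresh at $b$ each time it up-crosses $b$), and $N$ is the index of the first cycle on which $Z_{\Lambda_N}>a$. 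Each cycle independently ends in $\{Z_\lambda>a\}$ with probability $p\defeq\mathrm{P}_1(Z_\lambda>a)$ (since on a cycle $\Lambda=\lambda$ when $Z_\lambda>a$), so $N$ is geometric with parameter $p$ and in particular $\mathrm{E}_1[N]=1/p$. Moreover $N$ is a stopping time with respect to the cycle sequence, so by Wald's identity $\mathrm{ADD}^s = \mathrm{E}_1[N]\,\mathrm{E}_1[\Lambda] = \mathrm{E}_1[\Lambda]/\mathrm{P}_1(Z_\lambda>a)$. Finally, using $\Lambda = \lambda + t(Z_\lambda,b)\,\indic_{\{Z_\lambda<b\}}$ and the fact that $t$ is a deterministic function, $\mathrm{E}_1[\Lambda] = \mathrm{E}_1[\lambda] + \mathrm{E}_1[t(Z_\lambda,b)\mid Z_\lambda<b]\,\mathrm{P}_1(Z_\lambda<b)$, which is exactly the claimed formula.

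For the asymptotic statement \eqref{eq:ADDs}, I would compare $\mathrm{ADD}^s$ with $\mathrm{E}_1[\nu_b]$, the time for the Shiryaev recursion (never skipping) to go from $b$ to $a$. Both processes use recursion \eqref{eq:ZkrecursionTake} while $Z_k\in[b,a)$; they differ only in what happens when $Z_k$ falls below $b$: the Shiryaev process keeps applying \eqref{eq:ZkrecursionTake}, whereas $\gamma(a,b)$ applies \eqref{eq:ZkrecursionSkip} (monotone upward, no observations) and, in the $\mathrm{ADD}^s$ bookkeeping, resets to $b$ on the up-crossing. The key observations are: (i) each excursion below $b$ has finite expected length, and $t(Z_\lambda,b)$ is bounded by the deterministic number of skip-steps needed to climb from just below $b$ back to $b$, which is $O(1)$ in $a$; (ii) as $a\to\infty$, the number of such excursions before absorption, $N$, has expectation $1/\mathrm{P}_1(Z_\lambda>a)$, and since under $\mathrm{P}_1$ the random walk $\sum Y_k$ has positive drift $\mathrm{E}_1[Y_1] = D(f_1\|f_0)+|\log(1-\rho)|>0$, the probability $\mathrm{P}_1(Z_\lambda>a)$ of shooting past $a$ rather than dropping below $b$ on a given cycle is bounded away from $0$ (indeed tends to a positive constant), so $\mathrm{E}_1[N]=O(1)$; (iii) the ``productive'' portion of the passage — the final cycle during which $Z_k$ climbs from $b$ to $a$ using \eqref{eq:ZkrecursionTake} — takes time $\sim a/\mathrm{E}_1[Y_1]\to\infty$ by the strong law / renewal theorem, exactly matching $\mathrm{E}_1[\nu_b]$. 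Hence the total time below $b$ across all cycles is $O(1)$ while $\mathrm{E}_1[\nu_b]\to\infty$, so $\mathrm{ADD}^s = \mathrm{E}_1[\nu_b](1+o(1))$; more carefully, one writes $\mathrm{ADD}^s = \mathrm{E}_1[\nu_b] + \mathrm{E}_1[(\text{time spent below }b\text{ over all cycles})] + (\text{correction from the geometric number of restarts})$ and shows each error term is $o(\mathrm{E}_1[\nu_b])$.

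The main obstacle I anticipate is the careful handling of step (ii)–(iii): one must show not merely that $\mathrm{E}_1[N]$ is bounded but that the aggregate overhead from the non-final cycles is genuinely lower-order compared to $a$. This requires a uniform (in $a$) integrability bound on $\lambda$ and on $t(Z_\lambda,b)$ — e.g.\ that $\mathrm{E}_1[\lambda]$ stays bounded as $a\to\infty$, which follows because, conditioned on a cycle being non-final, the walk drops below $b$ quickly with overwhelming probability given the positive drift, so the conditional expected cycle length is $O(1)$. The cleanest route is probably to bound $\mathrm{E}_1[\lambda\,\indic_{\{Z_\lambda<b\}}]$ directly by a constant independent of $a$ using the positive drift of $\sum Y_k$ and standard random-walk exit-time estimates, then invoke the exact identity from stage one together with $\mathrm{E}_1[\nu_b]\sim a/\mathrm{E}_1[Y_1]$ (itself a consequence of nonlinear renewal theory applied to $Z_k = \sum Y_k + \eta_k$, where $\{\eta_k\}$ is slowly changing by Theorem \ref{thm:AsympOvershootDist}) to conclude.
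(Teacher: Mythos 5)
Your first stage (the exact identity) is precisely the paper's argument: decompose the reset-to-$b$ passage into i.i.d.\ cycles distributed as $\Lambda=\lambda+t(Z_\lambda,b)\indic_{\{Z_\lambda<b\}}$, note $N\sim\mathrm{Geom}(\mathrm{P}_1(Z_\lambda>a))$, and apply Wald. For the asymptotic claim \eqref{eq:ADDs}, however, your route differs genuinely from the paper's. The paper never computes the first-order growth of either side; instead it sandwiches $\mathrm{E}_1[\nu_b]$ between two auxiliary Shiryaev-type processes --- one reflected (truncated) at $b$, with passage time $\tilde\nu_b$, and one that runs the Shiryaev recursion below $b$ but discards the overshoot on each up-crossing, with passage time $\hat\nu_b$ --- so that $\mathrm{E}_1[\tilde\nu_b]\le\mathrm{E}_1[\nu_b]\le\mathrm{E}_1[\hat\nu_b]$, and then shows via Wald that both bounds equal $\mathrm{ADD}^s(1+o(1))$ because the only discrepancies ($\mathrm{E}_1[t(Z_\lambda,b);Z_\lambda<b]$ versus $\mathrm{E}_1[\nu(Z_\lambda,b);Z_\lambda<b]$) are bounded while $\mathrm{E}_1[\lambda]\to\infty$. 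You instead propose to evaluate both $\mathrm{ADD}^s$ and $\mathrm{E}_1[\nu_b]$ to first order as $a/(D(f_1,f_0)+|\log(1-\rho)|)$ and match them; this is valid for a $(1+o(1))$ conclusion and is arguably more transparent, but it leans on nonlinear renewal theory (the slowly changing character of $\eta_\lambda$, uniform integrability of the overshoot, and the cited asymptotics of $\mathrm{E}_1[\nu_b]$), whereas the paper's sandwich needs only Wald's identity, a monotone coupling, and the divergence of $\mathrm{E}_1[\lambda]$. The sandwich also directly compares the two quantities rather than passing through their common limit, which is the cleaner statement if one ever wanted more than first-order agreement.

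Two small cautions about your write-up. First, the intermediate bookkeeping ``$\mathrm{ADD}^s=\mathrm{E}_1[\nu_b]+(\text{time below }b)+(\text{restart correction})$'' is not an identity --- the reset process and the Shiryaev process are different processes, not the same path with different accounting --- so that decomposition should not survive into a final proof; your ``cleanest route'' via the exact identity is the one to keep. Second, the phrase ``$\mathrm{E}_1[\lambda]$ stays bounded as $a\to\infty$'' is false as written ($\mathrm{E}_1[\lambda]\ge\mathrm{E}_1[\lambda;Z_\lambda>a]\to\infty$); the quantity that is $O(1)$, and that you correctly identify a sentence later, is $\mathrm{E}_1[\lambda\,\indic_{\{Z_\lambda<b\}}]$, the restricted expectation over non-final cycles.
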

\begin{IEEEproof} We have
\begin{eqnarray*}
\mathrm{ADD}^s &=& \mathrm{E}_1\left[ \sum_{k=1}^{N} \Lambda_k\right] \\
&\overset{({\romannumeral 1})}{=}& \mathrm{E}_1[N] \mathrm{E}_1[\Lambda]  \nonumber\\
       &\overset{({\romannumeral 2})}{=}& \frac{\mathrm{E}_1[\Lambda]}{\mathrm{P}_1(Z_\lambda > a)}\nonumber\\
      &=&  \frac{\mathrm{E}_1[\lambda] + \mathrm{E}_1[t(Z_\lambda, b) | \{Z_\lambda < b\}] \mathrm{P}_1(Z_\lambda < b)}{\mathrm{P}_1(Z_\lambda > a)}.
\end{eqnarray*}
In the above equation, equality $({\romannumeral 1})$ follows from Wald's lemma \cite{Siegmund}, and equality $({\romannumeral 2})$ follows because $N \sim \text{Geom}(P(Z_\lambda > a))$.
To obtain \eqref{eq:ADDs}, the main idea of the proof is to find stopping times which upper and lower bound the Shiryaev time on average and have delay equal to
$\frac{\mathrm{E}_1[\lambda]}{\mathrm{P}_1(Z_\lambda > a)}$ as $a\to \infty$. The details are provided in the appendix.
\end{IEEEproof}

Note that Theorem \ref{thm:ADDExact} takes $\rho\to 0$. We now provide another expression for $\mathrm{E}[\tau-\Gamma| \tau \geq \Gamma]$,
for a fixed $b$ and $\rho$ as $a\to \infty$, which will be used to prove the asymptotic optimality of $\gamma(a,b)$ in Section~\ref{sec:OptimalityofTwoThreshold}.

%\begin{equation}
%\label{eq:delayEquivalence}
%\mathrm{E}[\tau_c-\Gamma| \tau_c \geq \Gamma] = \mathrm{E}_1[\nu_b]\left(1 + o(1) \right).
%\end{equation}

\begin{theorem}
\label{lem:ADD_1}
For a fixed $b$ and $\rho$, we have as $a \to \infty$
\begin{equation}
\label{eq:delayADDsUB}
\mathrm{E}[\tau-\Gamma| \tau \geq \Gamma] \leq \mathrm{ADD}^s\left(1 + o(1) \right),
\end{equation}
and hence, we have
\begin{equation}
\label{eq:ADDFirstOder}
\mathrm{E}[\tau-\Gamma| \tau \geq \Gamma] = \left[ \frac{a}{D(f_1, f_0) + |\log(1-\rho)|}\right]\big(1 + o(1)\big) \mbox{ as } a\to \infty,
\end{equation}
where, $D(f_1, f_0)$ is the K-L divergence between $f_0$ and $f_1$.
\end{theorem}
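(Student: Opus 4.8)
The plan is to establish \eqref{eq:delayADDsUB} first, and then obtain \eqref{eq:ADDFirstOder} by sandwiching $\mathrm{E}[\tau-\Gamma\mid\tau\geq\Gamma]$ between $\mathrm{ADD}^s(1+o(1))$ and a matching first-order lower bound.

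For \eqref{eq:delayADDsUB} I would start from the strong Markov property at the change point $\Gamma$. Given the history up to $\Gamma$ and the event $\{\tau\geq\Gamma\}$, the residual process $Z_{\Gamma+k}$ evolves under $\Prob_1$ according to \eqref{eq:ZkrecursionTake}--\eqref{eq:ZkrecursionSkip} started from $Z_\Gamma$, so $\mathrm{E}[\tau-\Gamma\mid\tau\geq\Gamma]=\mathrm{E}[\,T(Z_\Gamma)\mid\tau\geq\Gamma\,]$, where $T(z)$ is the $\Prob_1$-expected time for that process to cross $a$ starting from $z$. I would then split over the disjoint events $\mathcal{A},\mathcal{B},\mathcal{C}$ introduced before Theorem \ref{thm:ADDExact} (the decomposition already appearing in \eqref{eq:CondDelayA}). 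On $\mathcal{C}$ the residual time is the direct passage $\Lambda(Z_\Gamma)$ to $a$ with $Z_\Gamma\geq b$, which is dominated in mean by the full Shiryaev passage $\nu_b$ from $b$ to $a$, since conditioning a positive-drift walk to stay above $b$ and starting it at or above $b$ can only shorten the passage; on $\mathcal{A}$ and $\mathcal{B}$ the residual time equals the deterministic climb $t(Z_\Gamma,b)$ (respectively, the first descent to $b$) plus a random number of cycles whose reset structure is precisely the one defining $\mathrm{ADD}^s$, except that each cycle restarts at $b$ plus a nonnegative overshoot whose law does not depend on $a$. Hence in each case the residual time is at most $\mathrm{ADD}^s$ plus a correction coming from (i) a bounded number of cycles --- the per-cycle success probability $\Prob_1(Z_\lambda>a)$ stays bounded away from $0$ as $a\to\infty$ because a positive-drift walk has strictly positive probability of never returning below its start --- each contributing an $O(1)$ overshoot, and (ii) the climb $t(Z_\Gamma,b)\le (b-Z_\Gamma)^+/|\log(1-\rho)|+1$, whose conditional expectation is $O(1)$ once $\Expect_\infty|\log L(X)|<\infty$, because the conditioning on $\{\tau\geq\Gamma\}$ is asymptotically vacuous ($\Prob(\tau<\Gamma)\to0$ by \eqref{eq:PFAUPPERBND}) while $Z_\Gamma$ itself has an $a$-free law. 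Since $\mathrm{ADD}^s\to\infty$, all corrections are $o(\mathrm{ADD}^s)$ and \eqref{eq:delayADDsUB} follows.

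For the upper half of \eqref{eq:ADDFirstOder} I would combine \eqref{eq:delayADDsUB} with Lemma \ref{eq:ADDsENuEqui}, which gives $\mathrm{ADD}^s=\mathrm{E}_1[\nu_b](1+o(1))$, and with the standard Shiryaev first-order estimate $\mathrm{E}_1[\nu_b]=\frac{a}{D(f_1,f_0)+|\log(1-\rho)|}(1+o(1))$; the latter is Wald's identity applied to $Z_k$ under $\Prob_1$ via \eqref{eq:ZkrecursionTake}, namely $\mathrm{E}_1[Z_{\nu_b}]-b=\mathrm{E}_1[\nu_b]\,(D(f_1,f_0)+|\log(1-\rho)|)+\mathrm{E}_1\!\big[\sum_{k<\nu_b}\log(1+\rho e^{-Z_k})\big]$, where the overshoot $Z_{\nu_b}-a$ has bounded mean and the trailing sum is $o(\mathrm{E}_1[\nu_b])$ because $Z_k$ is large on all but $O(1)$ of the steps before $\nu_b$ as $a\to\infty$. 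For the matching lower half I would use the representation $Z_n=\sum_{k=1}^n Y_k+\eta_n$ from \eqref{eq:ZnEqSnEtan}, with $Y_k=\log L(X_k)+|\log(1-\rho)|$ and $\eta_n$ slowly changing by Theorem \ref{thm:AsympOvershootDist}: at $\tau$ we have $a<Z_\tau=\sum_{k=1}^{\Gamma-1}Y_k+\sum_{k=\Gamma}^{\tau}Y_k+\eta_\tau$, where $\sum_{k=1}^{\Gamma-1}Y_k=O_p(1)$ for fixed $\rho$ and $\eta_\tau=o(a)$ by slow change, so $\sum_{k=\Gamma}^{\tau}Y_k\geq a(1-o(1))$; taking $\Prob_1$-expectations and applying Wald's identity to the post-change increments (using that $\tau$ is a stopping time for $\{X_n\}$ and $\mathrm{E}_1[\log L(X)]=D(f_1,f_0)$) gives $(D(f_1,f_0)+|\log(1-\rho)|)\,\mathrm{E}_1[\tau-\Gamma](1+o(1))\geq a(1-o(1))$, i.e.\ $\mathrm{E}[\tau-\Gamma\mid\tau\geq\Gamma]\geq\frac{a}{D(f_1,f_0)+|\log(1-\rho)|}(1-o(1))$. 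The two halves together give \eqref{eq:ADDFirstOder}.

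The step I expect to be the main obstacle is the upper bound \eqref{eq:delayADDsUB}: one must compare, \emph{uniformly in $a$}, the excursion/cycle structure of the two-threshold recursion below $b$ with the idealized reset process that defines $\mathrm{ADD}^s$, and must check that conditioning on $\{\tau\geq\Gamma\}$ does not distort the law of $Z_\Gamma$ (and hence the residual climb and the number of cycles) by an amount that grows with $a$. This is precisely where integrability of $\log L$ under $f_0$ and the asymptotic vacuousness of the conditioning enter. A secondary subtlety is justifying the interchange of limit and expectation in the lower-bound step, i.e.\ uniform integrability of $(\tau-\Gamma)/a$, which again follows from the strictly positive post-change drift of $Z_k$.
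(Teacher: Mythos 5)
Your proposal is correct in outline, and its two halves relate differently to the paper. The upper bound \eqref{eq:delayADDsUB} and the upper half of \eqref{eq:ADDFirstOder} follow essentially the paper's own route: the paper reuses the $\mathcal{A},\mathcal{B},\mathcal{C}$ decomposition and the per-event upper bounds from the proof of Theorem \ref{thm:ADDExact} to get $\mathrm{E}[\tau-\Gamma\mid\tau\geq\Gamma]\leq \mathrm{ADD}^s+\mathrm{E}[t(Z_\Gamma,b)\mid\mathcal{A},\tau\geq\Gamma]+\mathrm{E}[\Lambda(Z_\Gamma)\mid\mathcal{B},\tau\geq\Gamma]$, observes that only $\mathrm{ADD}^s$ grows with $a$, and then invokes Lemma \ref{eq:ADDsENuEqui} together with the random-walk-plus-slowly-changing-term representation and Siegmund's Lemma 9.1.3 to evaluate $\mathrm{E}_1[\nu_b]$ — exactly your plan, with your extra detail about bounded cycle counts and the $a$-free law of $Z_\Gamma$ being the content the paper leaves implicit. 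Where you genuinely diverge is the lower bound. You propose a direct pathwise/Wald argument on the stopped statistic: $Z_\tau>a$, split off the pre-change increments and the slowly changing term, and apply Wald's identity to the post-change increments. The paper instead argues by comparison: Theorem \ref{thm:PFA} shows $\Prob[\tau<\Gamma]=\Prob[\nu_0<\Gamma](1+o(1))$, the Shiryaev rule is exactly optimal for the PFA-constrained problem (i.e.\ \eqref{eq:basicproblem} with $\beta=\infty$), so $\mathrm{E}[\tau-\Gamma\mid\tau\geq\Gamma]\geq\mathrm{E}[\nu_0-\Gamma\mid\nu_0\geq\Gamma](1+o(1))$, and the latter is bounded below by the first-order expression via Theorem 1 of the Tartakovsky--Veeravalli reference. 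The paper's route is shorter and offloads all renewal-theoretic delicacies onto cited results, at the cost of leaning on exact optimality of a different procedure; your route is self-contained and specific to $\gamma(a,b)$, but to make it airtight you must control $\mathrm{E}[\eta_\tau]$ and the pre-change sum $\sum_{k<\Gamma}Y_k$ in expectation (not just in probability) and justify the uniform integrability you flag — precisely the hypotheses under which nonlinear renewal theory yields expansions of $\mathrm{E}_1[\tau]$. Both arguments deliver the same first-order conclusion, so no gap, just a different (and somewhat more laborious) path on the lower-bound half.
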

\begin{IEEEproof}
To get \eqref{eq:delayADDsUB}, we show that $\mathrm{ADD}^s$
is the dominant term in an upper bound to $\mathrm{E}[\tau-\Gamma| \tau \geq \Gamma]$ as $a \to \infty$. The steps followed are very similar to those
used to obtain \eqref{eq:CondDelayA}. The proof is given in the appendix.

To obtain \eqref{eq:ADDFirstOder}, from Lemma \ref{eq:ADDsENuEqui} and \eqref{eq:delayADDsUB} we have,
\[\mathrm{E}[\tau-\Gamma| \tau \geq \Gamma] \leq \mathrm{E}_1 [\nu_b] (1 + o(1)) \mbox{ as } a \to \infty.\]
To evaluate $\mathrm{E}_1 [\nu_b]$, following steps similar to those in Section \ref{sec:Overshoot}, it is easy to show that evolution of $Z_k$ from $b$ to $a$, with $Z_0=b$,
is according to the random walk $\sum_k \log L(X_k) + |\log (1-\rho)|$ and a slowly changing term. Thus, according to Lemma 9.1.3, pg 191 of \cite{Siegmund},
\[\mathrm{E}_1 [\nu_b] = \left[ \frac{a}{D(f_1, f_0) + |\log(1-\rho)|}\right]\big(1 + o(1)\big) \mbox{ as } a\to \infty,\]
and
\[\mathrm{E}[\tau-\Gamma| \tau \geq \Gamma] \leq \left[ \frac{a}{D(f_1, f_0) + |\log(1-\rho)|}\right]\big(1 + o(1)\big) \mbox{ as } a\to \infty.\]

To complete the proof of Theorem \ref{lem:ADD_1}, we now show that $\mathrm{E}[\tau-\Gamma| \tau \geq \Gamma]$ is asymptotically lower bounded by $\mathrm{E}_1[\nu_b]$.
From Theorem 1 in \cite{VVV2005},
\[\mathrm{E}[\nu_0-\Gamma| \nu_0 \geq \Gamma] \geq \frac{a}{D(f_1, f_0) + |\log(1-\rho)|}(1 + o(1)) \mbox{ as } a\to \infty. \]
Also, from Theorem \ref{thm:PFA},
\[\Prob[\tau < \Gamma] = \Prob[\nu_0 < \Gamma](1 + o(1)) \mbox{ as } a\to \infty.\]
Thus, we have
\[\mathrm{E}[\tau-\Gamma| \tau \geq \Gamma] \geq \mathrm{E}[\nu_0 - \Gamma| \nu_0 \geq \Gamma] (1 + o(1)) \mbox{ as } a\to \infty.\]
This is true because Shiryaev algorithm is optimal for problem \eqref{eq:basicproblem} with $\beta=\infty$.
%This is true because skipping observations can only lead to larger delay.
This completes the proof. %The result then follows from \eqref{eq:ShiryaevShiryaevEqui}.
\end{IEEEproof}

\smallskip

\subsection{Computation of $\mathrm{ANO}$}
\label{sec:Energy}

First note that,
\begin{eqnarray*}
\mathrm{ANO} &=& \mathrm{E}\left[\sum_{k=1}^{\min\{\tau, \Gamma-1\}} S_k \right]\\% = \mathrm{E}\left[\sum_{k=1}^{\tau_c} S_k \right] \\
&=& \mathrm{E}\left[\sum_{k=1}^{\Gamma-1} S_k \bigg| \tau \geq \Gamma \right]\mathrm{P}(\tau \geq \Gamma) + \mathrm{E}\left[\sum_{k=1}^{\tau} S_k \bigg| \tau < \Gamma\right]\mathrm{P}(\tau < \Gamma)\\
&=&\mathrm{E}\left[\sum_{k=1}^{\Gamma-1} S_k \bigg| \tau \geq \Gamma \right]\left(1 + o(1)\right)
 \ \ \ \mbox{ as } \ \ \ a \to \infty.
 \end{eqnarray*}
 The last equality follows because $\sum_{k=1}^{\tau} S_k \leq \Gamma$ on $\{\tau < \Gamma\}$, and
 $\mathrm{P}(\tau < \Gamma) < e^{-a} \to 0$ as $a\to \infty$.

Following \eqref{eq:TwoSidedShiryaev}, we define
\begin{equation}
\label{eq:LambdaInfty}
\hat{\lambda} = \inf\{k \geq 1: Z_k < b, Z_0=b, a=\infty\}.
\end{equation}
The theorem below an gives asymptotic expression for $\mathrm{ANO}$.
\begin{theorem}
\label{lem:Energy}
For fixed $b$, we have as $a \to \infty$, and as $\rho\to 0$,
%\begin{eqnarray*}
%\mathrm{ANO}_1 = \mathrm{E}_1[\nu_b] (1 + o(1)),
%\end{eqnarray*}
%and as $c,a \to \infty, \rho \to 0$, and $b\to-\infty$, with $\rho$ taken to 0 before $b$ is taken to $-\infty$,
\begin{eqnarray*}
\mathrm{ANO} =\frac{ \mathrm{E}_\infty[\hat{\lambda}]}{\mathrm{P}_\infty[\Gamma \leq \hat{\lambda} + t(Z_{\hat{\lambda}},b)]} \frac{1}{1+e^b}(1 + o(1)),
\end{eqnarray*}
where, $\hat{\lambda}$ is as defined in \eqref{eq:LambdaInfty}.
\end{theorem}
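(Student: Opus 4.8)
The plan is to start from the reduction already displayed in the text, namely $\mathrm{ANO}=\mathrm{E}\bigl[\sum_{k=1}^{\Gamma-1}S_k\mid\tau\geq\Gamma\bigr](1+o(1))$ as $a\to\infty$, and push it one further step. On $\{\tau\geq\Gamma\}$ the recursion for $Z_k$ never meets the upper threshold before time $\Gamma$, so on that event $\sum_{k=1}^{\Gamma-1}S_k$ coincides with $W\defeq\sum_{k=1}^{\Gamma-1}\indic_{\{Z_{k-1}\geq b\}}$ computed under the $a=\infty$ dynamics; since $\mathrm{P}(\tau<\Gamma)\leq e^{-a}$ and $W\leq\Gamma$ has finite second moment for fixed $\rho$, dropping the conditioning costs only a factor $1+o(1)$. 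Hence $\mathrm{ANO}=\mathrm{E}_\infty[W](1+o(1))$ as $a\to\infty$, where the expectation is over the pre-change observations (i.i.d.\ $f_0$, the measure $\mathrm{P}_\infty$) together with the independent geometric $\Gamma$, and $W$ is just the number of time steps before $\Gamma$ at which $Z_k$ lies above $b$.

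Next I would evaluate $\mathrm{E}_\infty[W]$. Independence of $\Gamma$ and $\{Z_k\}$ together with $\mathrm{P}(\Gamma\geq j+2)=(1-\rho)^{j+1}$ gives the exact identity
\[
\mathrm{E}_\infty[W]=\sum_{j\geq0}\mathrm{P}_\infty(Z_j\geq b)\,(1-\rho)^{j+1},
\]
a geometrically discounted occupation time of $\{Z\geq b\}$. Under $\mathrm{P}_\infty$ with $a=\infty$ and $Z_0=-\infty$ (i.e.\ $p_0=0$) the trajectory of $Z_k$ has a renewal structure: a deterministic initial segment of length $t_0\defeq t(-\infty,b)$ on which $Z_k<b$, then a sequence of cycles, the $i$-th spending $\hat\lambda_i$ steps at or above $b$ (recursion \eqref{eq:ZkrecursionTake}) and then $T_i\defeq t(Z_{\hat\lambda_i},b)$ steps below $b$ (recursion \eqref{eq:ZkrecursionSkip}) before returning to $b$. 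Summing the discount factor cycle by cycle, and using that the pairs $(\hat\lambda_i,T_i)$ are i.i.d.\ copies of $(\hat\lambda,t(Z_{\hat\lambda},b))$ up to $o(1)$ as $\rho\to0$, I get
\[
\mathrm{E}_\infty[W]=\frac{(1-\rho)^{t_0+1}}{\rho}\cdot\frac{1-\mathrm{E}_\infty\!\bigl[(1-\rho)^{\hat\lambda}\bigr]}{1-\mathrm{E}_\infty\!\bigl[(1-\rho)^{\hat\lambda+t(Z_{\hat\lambda},b)}\bigr]}\,\bigl(1+o(1)\bigr)\quad\text{as }\rho\to0 .
\]

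I would then simplify using the elementary identity $1-\mathrm{E}_\infty[(1-\rho)^{M}]=\mathrm{P}_\infty(\Gamma\leq M)$, valid for any positive-integer-valued $M$ independent of $\Gamma$, applied with $M=\hat\lambda$ and $M=\hat\lambda+t(Z_{\hat\lambda},b)$: the denominator is then exactly $\mathrm{P}_\infty[\Gamma\leq\hat\lambda+t(Z_{\hat\lambda},b)]$. Finally I would let $\rho\to0$. On the deterministic initial segment $1-p_k=(1-\rho)^k$, so $t_0$ is the first $k$ with $(1-\rho)^k<1-B$ and hence $(1-\rho)^{t_0+1}\to 1-B=\tfrac1{1+e^b}$; and $\rho^{-1}\bigl(1-\mathrm{E}_\infty[(1-\rho)^{\hat\lambda}]\bigr)=\rho^{-1}\mathrm{P}_\infty(\Gamma\leq\hat\lambda)\to\mathrm{E}_\infty[\hat\lambda]$, because $\hat\lambda$ is the first-passage time below $b$ of a walk whose drift $\mathrm{E}_\infty[\log L(X_1)]+|\log(1-\rho)|$ is strictly negative for small $\rho$, so $\mathrm{E}_\infty[\hat\lambda]$ is finite and bounded as $\rho\to0$. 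Multiplying the three factors gives exactly the stated expression.

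The main obstacle is making the ``i.i.d.\ cycles'' step rigorous. The process does not re-enter $\{Z\geq b\}$ exactly at $b$ but at $b$ plus an overshoot, and this overshoot differs between the deterministic initial segment and the later cycles, which are entered from random positions below $b$; so the cycles are only a Markov-renewal, not a genuine renewal sequence. The rescuing fact is that near $b$ the skip recursion \eqref{eq:ZkrecursionSkip} has increment $|\log(1-\rho)|+\log(1+\rho e^{-Z_k})=O(\rho)$, so every such overshoot is $O(\rho)$ and collapses to $0$ as $\rho\to0$; one must show (for instance via the reset-at-$b$ device used elsewhere in the paper) that replacing each cycle's entry point by $b$ perturbs $\mathrm{E}_\infty[(1-\rho)^{\hat\lambda_i}]$ and $\mathrm{E}_\infty[(1-\rho)^{\hat\lambda_i+T_i}]$ only by a factor $1+o(1)$, and that the resulting geometric series converges (it does, since $\mathrm{E}_\infty[(1-\rho)^{\hat\lambda+t(Z_{\hat\lambda},b)}]\leq 1-\rho<1$). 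A minor remaining point is justifying the passage to the limit inside the expectations as $\rho\to0$ by dominated convergence, using that $\hat\lambda$ is stochastically monotone in $\rho$.
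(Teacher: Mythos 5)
Your proposal is correct, and its skeleton is the same as the paper's: reduce to the pre-change dynamics under $\mathrm{P}_\infty$ using $\mathrm{P}(\tau<\Gamma)\leq e^{-a}$; extract the factor $\tfrac{1}{1+e^b}$ from the deterministic ramp of length $t(-\infty,b)$ via $(1-\rho)^{t(-\infty,b)}\to 1-B$; treat the excursions above and below $b$ as renewal cycles interrupted by the geometric change point; and control the $O(\rho)$ re-entry overshoot (the one genuine obstacle, which you correctly identified) by the reset-to-$b$ versus reset-to-$b_1$ sandwich that the paper uses in Lemma \ref{lem:ANOBeforeGamma}. The packaging, however, is genuinely different. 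The paper splits $\mathrm{ANO}$ into the product $\mathrm{E}[\sum_{k=t(b)}^{\Gamma-1}S_k\mid \Gamma>t(b),\tau\geq\Gamma]\cdot\mathrm{P}(\Gamma>t(b)\mid\tau\geq\Gamma)$ and evaluates the first factor by Wald's lemma applied to a $\mathrm{Geom}(\mathrm{P}_\infty[\Gamma\leq\hLam^b])$ number of cycles (Lemmas \ref{lem:ANOBeforeGamma} and \ref{lem:ANOProb}); you instead start from the exact discounted-occupation-time identity $\mathrm{E}_\infty[W]=\sum_{j\geq0}\mathrm{P}_\infty(Z_j\geq b)(1-\rho)^{j+1}$ and sum the geometric series over cycles. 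Your route buys two things: the denominator $\mathrm{P}_\infty[\Gamma\leq\hat{\lambda}+t(Z_{\hat{\lambda}},b)]$ appears as an exact algebraic identity ($1-\mathrm{E}[(1-\rho)^{M}]=\mathrm{P}(\Gamma\leq M)$) rather than through the distribution of the number of cycles, and the $\tfrac{1}{1+e^b}$ factor drops out of the same series, eliminating the need for a separate conditional-probability lemma. The price is that you must justify the interchange of the $\rho\to0$ limit with the expectation in $\rho^{-1}\bigl(1-\mathrm{E}_\infty[(1-\rho)^{\hat{\lambda}}]\bigr)\to\mathrm{E}_\infty[\hat{\lambda}]$, but this is the exact analogue of the paper's step $\mathrm{E}_\infty[\hLam^x\delta^x]\to\mathrm{E}_\infty[\hat{\lambda}(x)]$, and your monotone-coupling argument (the drift of $Z_k$ under $\mathrm{P}_\infty$ is increasing in $\rho$, so $\hat{\lambda}$ can be dominated by its value at a fixed small $\rho_0$) closes it. I see no gap.
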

\begin{IEEEproof}
Let $t(b)$ be the first time $Z_k$ crossed $b$ from below, i.e., $t(b)=t(z_0, b)$. Using the fact that observations are used only after $t(b)$, we can write the following:
\begin{eqnarray}
\label{eq:TwoTermsForANO}
\mathrm{ANO} \hspace{-0.3cm}&=&\hspace{-0.3cm} \mathrm{E}\left[\sum_{k=1}^{\Gamma-1} S_k \bigg| \tau \geq \Gamma \right] \nonumber\\
&=&\hspace{-0.3cm} \mathrm{E}\left[\sum_{k=t(b)}^{\Gamma-1} S_k \bigg| \Gamma > t(b), \tau \geq \Gamma \right] \mathrm{P}(\Gamma > t(b)| \tau \geq \Gamma ).
\end{eqnarray}
We now compute each of the two terms in \eqref{eq:TwoTermsForANO}.
For the first term in \eqref{eq:TwoTermsForANO}, we have the following lemma.
\begin{lemma}
\label{lem:ANOBeforeGamma}
For a fixed $b$, as $a\to\infty$, $\rho \to 0$,
\[
\mathrm{E}\left[\sum_{k=t(b)}^{\Gamma-1} S_k \bigg| \Gamma > t(b), \tau \geq \Gamma \right] = \frac{\ \mathrm{E}_\infty[\hat{\lambda}]}{\mathrm{P}_\infty[\Gamma \leq \hat{\lambda} + t(Z_{\hat{\lambda}},b)]}(1+o(1)).
\]
\end{lemma}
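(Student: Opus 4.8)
The idea is to strip away everything that is asymptotically irrelevant, leaving a clean pre-change renewal computation that can be evaluated in closed form, and then to carry out that evaluation.

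The first step is a sequence of reductions. On $\{\tau\ge\Gamma\}$ the statistic $Z_k$ never reaches $a$ before $\Gamma$, and since $\Prob(\tau<\Gamma)\le e^{-a}=o(1)$, replacing the $\gamma(a,b)$ dynamics by the $a=\infty$ dynamics and dropping the conditioning event $\{\tau\ge\Gamma\}$ changes the conditional expectation only by a factor $1+o(1)$ as $a\to\infty$ — this is the same reasoning already used above to derive $\mathrm{ANO}$ and to pass to \eqref{eq:TwoTermsForANO}. Before $\Gamma$ the $X_k$ are i.i.d.\ $f_0$, so the reduced process evolves under $\Prob_\infty$. Moreover $t(b)=t(Z_0,b)$ is a \emph{deterministic} time, since recursion \eqref{eq:ZkrecursionSkip} is deterministic; hence conditioning on $\{\Gamma>t(b)\}$ merely restarts the geometric clock, so that $\Gamma-t(b)$ is again geometric with parameter $\rho$ and independent of $\{X_k\}$, while $Z_{t(b)}=b+O(\rho)$ because the increments of \eqref{eq:ZkrecursionSkip} near $b$ are $\Theta(\rho)$.

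The second step is the renewal decomposition. Starting from time $t(b)$ with $Z_{t(b)}\approx b$, split the trajectory into cycles: cycle $i$ consists of an ``observing'' run of length $\hat\lambda_i$ during which $Z_k\ge b$ (recursion \eqref{eq:ZkrecursionTake}, so $S_k=1$) followed by a ``silent'' run of length $t(Z_{\hat\lambda_i},b)$ during which $Z_k<b$ (recursion \eqref{eq:ZkrecursionSkip}, so $S_k=0$); the total cycle length is $C_i=\hat\lambda_i+t(Z_{\hat\lambda_i},b)$. A down-crossing of $b$ under \eqref{eq:ZkrecursionTake} leaves an $O(1)$ undershoot, which is precisely the random variable $Z_{\hat\lambda_i}$ that we keep intact, but the subsequent up-crossing of $b$ under \eqref{eq:ZkrecursionSkip} leaves only an $O(\rho)$ overshoot; hence every cycle begins at $b+o(1)$, and as $\rho\to0$ the pairs $(\hat\lambda_i,C_i)$ become i.i.d.\ up to a $1+o(1)$ error, distributed as $(\hat\lambda,\ \hat\lambda+t(Z_{\hat\lambda},b))$ with $\hat\lambda$ as in \eqref{eq:LambdaInfty}. (Rigorously the cycle-start values form a Markov chain whose one-step drift is $O(\rho)$; one either threads this through a Markov-renewal version of the next computation or absorbs it into the $o(1)$.)

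The third step is the computation itself. Writing $V=\sum_{k=t(b)}^{\Gamma-1}S_k$, condition on the cycle sequence and average over the independent $\Gamma$ using $\Prob(\Gamma>t(b)+k\mid\Gamma>t(b))=(1-\rho)^k$. Setting $T_{i-1}=\sum_{j<i}C_j$ and re-indexing within each cycle, $\Expect[V]=\Expect\big[\sum_{i\ge1}(1-\rho)^{T_{i-1}}\sum_{l=0}^{\hat\lambda_i-1}(1-\rho)^l\big]$. Since $(1-\rho)^{T_{i-1}}$ depends only on cycles $1,\dots,i-1$ while $\hat\lambda_i$ depends only on cycle $i$, independence and summing the inner geometric series give $\Expect[V]=\Expect_\infty[(1-(1-\rho)^{\hat\lambda})/\rho]\,\sum_{i\ge1}\big(\Expect_\infty[(1-\rho)^{C_1}]\big)^{i-1}=\Expect_\infty[(1-(1-\rho)^{\hat\lambda})/\rho]\,/\,\Prob_\infty[\Gamma\le\hat\lambda+t(Z_{\hat\lambda},b)]$, where we used $1-\Expect_\infty[(1-\rho)^{C_1}]=\Prob_\infty[\Gamma\le C_1]$ with $C_1=\hat\lambda+t(Z_{\hat\lambda},b)$. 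Finally $0\le(1-(1-\rho)^{\hat\lambda})/\rho\le\hat\lambda$ with $\Expect_\infty[\hat\lambda]<\infty$ (recursion \eqref{eq:ZkrecursionTake} has strictly negative $\Prob_\infty$-drift for small $\rho$, so the lower level is hit in finite expected time), and dominated convergence gives $\Expect_\infty[(1-(1-\rho)^{\hat\lambda})/\rho]=\Expect_\infty[\hat\lambda](1+o(1))$ as $\rho\to0$; combining with the reductions of the first step yields the stated formula.

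\textbf{Main obstacle.} The delicate point is the renewal reduction of the second step: showing that the cycle-start law stabilizes — equivalently, that its $O(\rho)$ Markovian drift is negligible — uniformly enough to make the i.i.d.\ cycle approximation valid to within $1+o(1)$, and simultaneously checking that the $a\to\infty$ reduction of the first step is uniform over the relevant range of $\rho$ so that the two limits may legitimately be taken in the order $a\to\infty$ then $\rho\to0$. In particular one must control $\Expect_\infty[t(Z_{\hat\lambda},b)]=\Theta(1/\rho)$ and its fluctuations, since this quantity governs $\Prob_\infty[\Gamma\le\hat\lambda+t(Z_{\hat\lambda},b)]$, which has a nondegenerate limit as $\rho\to0$; this requires the bounded-overshoot estimates from (nonlinear) renewal theory already invoked in Section~\ref{sec:Overshoot}. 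The remaining manipulations with the geometric prior are routine.
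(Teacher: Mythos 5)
Your proof is correct in substance and follows the same cycle/renewal decomposition as the paper: cycles consisting of an observation phase distributed as $\hat{\lambda}$ followed by a deterministic silent phase $t(Z_{\hat{\lambda}},b)$, a geometrically distributed number of cycles with success probability $\mathrm{P}_\infty[\Gamma \leq \hat{\lambda}+t(Z_{\hat{\lambda}},b)]$, and a per-cycle observation count converging to $\mathrm{E}_\infty[\hat{\lambda}]$ as $\rho\to0$. The bookkeeping differs: you compute the expectation by explicit geometric discounting, $(1-\rho)^{T_{i-1}}\sum_{l<\hat\lambda_i}(1-\rho)^l$, and sum the resulting series, whereas the paper applies Wald's lemma to a geometric number of change-truncated cycles $\tilde\Lambda^x$ with $\delta^x$ denoting the observed fraction, then shows $\mathrm{E}_\infty[\tilde\Lambda^x\delta^x]\to\mathrm{E}_\infty[\hat\lambda(x)]$; your route is arguably more explicit and yields the exact pre-limit formula $\mathrm{E}_\infty[(1-(1-\rho)^{\hat\lambda})/\rho]/\mathrm{P}_\infty[\Gamma\le C_1]$. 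The one point you flag as an obstacle but do not resolve --- the drifting cycle-start values caused by the $O(\rho)$ overshoot at up-crossings of $b$ --- is handled in the paper by a monotone sandwich: each up-crossing lands in $(b, b_1]$ with $b_1 = b+\log\frac{1}{1-\rho}+\log(1+e^{-b}\rho)$, and because of the memorylessness of $\Gamma$ the expected observation count is monotone in the reset point, so resetting to $b$ at every up-crossing gives a lower bound, resetting to $b_1$ gives an upper bound, the cycles become genuinely i.i.d.\ in each bounding system, and both bounds converge to the same limit since $b_1\to b$ as $\rho\to0$. Incorporating that sandwich would close the gap in your argument without a Markov-renewal detour; everything else you wrote matches the paper's reasoning.
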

\begin{IEEEproof}Note that
\[
\lim_{a\to\infty} \mathrm{E}\left[\sum_{k=t(b)}^{\Gamma-1} S_k \bigg| \Gamma > t(b), \tau \geq \Gamma \right]
= \mathrm{E}\left[\sum_{k=t(b)}^{\Gamma-1} S_k \bigg| \Gamma > t(b), a=\infty \right].
\]
To compute the right hand side of the above equation, note that conditioned on $\{\Gamma > t(b)\}$, $\sum_{k=t(b)}^{\Gamma-1} S_k$ is
approximately the number of observations used when the process $Z_k$ starts at $Z_0=b$, goes through multiple cycles below $b$,
with each cycle length having distribution of $\hat{\lambda}$, and the sequence of cycles is interrupted by occurrence of change.
See the appendix for the detailed proof.
\end{IEEEproof}

For the second term in \eqref{eq:TwoTermsForANO}, we show that $\mathrm{P}(\Gamma > t(b)| \tau \geq \Gamma )$ is equal to $\frac{1}{1+e^b}$ in the limit and is independent of $z_0$.
\begin{lemma}
\label{lem:ANOProb}
\[\mathrm{P}(\Gamma > t(b)| \tau \geq \Gamma ) = \frac{1}{1+e^b} + o(1) \ \ \ \mbox{ as } \ \ a \to \infty, \rho \to 0. \]
\end{lemma}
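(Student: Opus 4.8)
\textbf{Proof proposal for Lemma \ref{lem:ANOProb}.}
The plan is to exploit the fact that, before $Z_k$ first up-crosses $b$, no observations are used, so the filter $p_k$ is purely deterministic and coincides with the prior probability $\mathrm{P}(\Gamma\le k)$. First I would note that $t(b)=t(z_0,b)$ is a deterministic (constant) time, since the recursion \eqref{eq:ZkrecursionSkip} that governs $Z_k$ on $\{Z_k<b\}$ involves only $\rho$, and that for every $j\le t(b)$ we have $Z_{j-1}<b$, hence $p_{j-1}<B$ and $S_j=0$; thus the information vector $I_{t(b)}$ contains no samples. Consequently $p_{t(b)}=\mathrm{P}(\Gamma\le t(b)\mid I_{t(b)})=\mathrm{P}(\Gamma\le t(b))$ is a deterministic number, and $\mathrm{P}(\Gamma> t(b))=1-p_{t(b)}$.

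Next I would pin down $p_{t(b)}$ as $\rho\to 0$. Since $Z_{t(b)-1}\le b$ we have $p_{t(b)-1}\le B$, and since $Z_{t(b)}>b$ we have $p_{t(b)}>B$; combining these with the update rule $p_{t(b)}=\Phi^{(0)}(p_{t(b)-1})=p_{t(b)-1}+(1-p_{t(b)-1})\rho$ gives $B< p_{t(b)}\le B+\rho$. This also yields the claimed independence of $z_0$: only the bracketing of $p_{t(b)}$ by the threshold $B$ enters, not where the trajectory started, provided $z_0<b$ so that $t(b)\ge 1$ (which holds in particular for $p_0=0$). Hence $\mathrm{P}(\Gamma> t(b)) = 1-p_{t(b)} \to 1-B = \tfrac{1}{1+e^b}$ as $\rho\to 0$.

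Then I would remove the conditioning on $\{\tau\ge\Gamma\}$ at negligible cost as $a\to\infty$, using the universal bound $\mathrm{PFA}=\mathrm{P}(\tau<\Gamma)\le e^{-a}$ from \eqref{eq:PFAUPPERBND}. Writing
\[
\mathrm{P}(\Gamma> t(b)\mid \tau\ge\Gamma)=\frac{\mathrm{P}(\Gamma> t(b))-\mathrm{P}(\Gamma> t(b),\,\tau<\Gamma)}{1-\mathrm{P}(\tau<\Gamma)},
\]
and bounding $0\le \mathrm{P}(\Gamma> t(b),\tau<\Gamma)\le \mathrm{P}(\tau<\Gamma)\le e^{-a}$ together with $0\le \mathrm{P}(\tau<\Gamma)\le e^{-a}$, the numerator equals $\mathrm{P}(\Gamma> t(b))+O(e^{-a})$ and the denominator equals $1+O(e^{-a})$, so the ratio is $\mathrm{P}(\Gamma> t(b))+o(1)$ as $a\to\infty$. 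Combining this with the previous paragraph gives $\mathrm{P}(\Gamma> t(b)\mid \tau\ge\Gamma)=\tfrac{1}{1+e^b}+o(1)$ as $a\to\infty,\ \rho\to 0$, which is the claim.

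I do not anticipate a serious obstacle here. The only point requiring a little care is the second step, i.e.\ squeezing the deterministic value $p_{t(b)}$ to $B$, but this is immediate once one observes that $p_{t(b)}$ is sandwiched between one value at or below $B$ and the next value above $B$, with a $\Phi^{(0)}$-increment of size $O(\rho)$. The conceptual content lies entirely in the first step: recognizing that on $[0,t(b)]$ the posterior ignores the (as yet unobserved) change and therefore equals the prior, so the very definition of the crossing time $t(b)$ forces $p_{t(b)}\approx B$, i.e.\ $\mathrm{P}(\Gamma> t(b))\approx 1-B$.
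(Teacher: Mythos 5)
Your proposal is correct, and its overall skeleton matches the paper's: first remove the conditioning on $\{\tau\geq\Gamma\}$ at cost $o(1)$ as $a\to\infty$ via the bound $\mathrm{P}(\tau<\Gamma)\leq e^{-a}$, then evaluate the unconditional probability $\mathrm{P}(\Gamma>t(b))$ as $\rho\to 0$. Where you differ is in the second step. The paper writes $\mathrm{P}(\Gamma>t(b))=\frac{1}{1+e^{z_0}}(1-\rho)^{t(b)}$ explicitly from the geometric prior and then substitutes the asymptotic formula $t(z_0,b)\approx\bigl(\log(1+e^b)-\log(1+e^{z_0})\bigr)/|\log(1-\rho)|$ from Lemma~\ref{lem:txyExprn_ADD}, so that $(1-\rho)^{t(b)}\to(1+e^{z_0})/(1+e^b)$ and the $z_0$-dependence cancels. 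You instead observe that on $[0,t(b)]$ no samples are taken, so $p_k$ coincides with the prior $\mathrm{P}(\Gamma\leq k)$, and then squeeze $B<p_{t(b)}\leq B+\rho$ directly from the definition of the crossing time and the $\Phi^{(0)}$ increment; hence $\mathrm{P}(\Gamma>t(b))=1-p_{t(b)}\to 1-B=\frac{1}{1+e^b}$. Your route is slightly more elementary and self-contained (it does not invoke Lemma~\ref{lem:txyExprn_ADD}), and it makes the cancellation of $z_0$ conceptually transparent: the answer is forced to be $1-B$ because $t(b)$ is by definition the first time the deterministic posterior exceeds $B$. The paper's route has the minor advantage of reusing a formula it needs elsewhere anyway. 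Your caveat that $z_0<b$ (so $t(b)\geq 1$) is appropriate and consistent with the paper's standing assumption $p_0=0$.
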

\begin{IEEEproof}
The proof is provided in the appendix.
\end{IEEEproof}

The Lemmas \ref{lem:ANOBeforeGamma} and \ref{lem:ANOProb} taken together completes the proof of Theorem \ref{lem:Energy}.
\end{IEEEproof}
\smallskip

Define,
\[\mathrm{ANO}_1 = \mathrm{E}\left[\sum_{k=\Gamma}^{\tau} S_k \bigg| \tau \geq \Gamma \right].\]
Thus, $\mathrm{ANO}_1$ is the average number of observations used after the change point $\Gamma$. In some applications it
might be of interest to have an estimate of $\mathrm{ANO}_1$ as well. The following theorem shows that $\mathrm{ANO}_1$
is approximately equal to the delay itself.

\begin{theorem}
\label{lem:ANO1}
For fixed $b$ and $\rho$, we have
\begin{eqnarray*}
\mathrm{ANO}_1 = \mathrm{E}_1[\nu_b] (1 + o(1)), \ \ \mbox{ as } a \to \infty.
\end{eqnarray*}
\end{theorem}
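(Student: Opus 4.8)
The plan is to express $\mathrm{ANO}_1$ through the post-change delay $\tau-\Gamma$ and the small amount of time $Z_k$ spends below $b$ after $\Gamma$, and then invoke the delay asymptotics already established. Under $\gamma(a,b)$ an observation is taken at step $k$ exactly when $p_{k-1}\geq B$, i.e., $Z_{k-1}\geq b$, so on $\{\tau\geq\Gamma\}$ we have
\[
\sum_{k=\Gamma}^{\tau} S_k \;=\; \sum_{k=\Gamma}^{\tau}\indic_{\{Z_{k-1}\geq b\}} \;=\; (\tau-\Gamma+1)\;-\;\#\{\,j:\ \Gamma-1\leq j\leq\tau-1,\ Z_j<b\,\}.
\]
The subtracted count equals $T_b+O(1)$, where $T_b$ is the time $Z_k$ spends strictly below $b$ between $\Gamma$ and $\tau$ (the quantity in Table~\ref{Tab:Glosaary}). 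Taking conditional expectation given $\{\tau\geq\Gamma\}$ yields $\mathrm{ANO}_1 = \mathrm{E}[\tau-\Gamma|\tau\geq\Gamma] - \mathrm{E}[T_b|\tau\geq\Gamma] + O(1)$. By Theorem~\ref{lem:ADD_1} and Lemma~\ref{eq:ADDsENuEqui}, $\mathrm{E}[\tau-\Gamma|\tau\geq\Gamma] = \mathrm{E}_1[\nu_b](1+o(1))$ as $a\to\infty$, with $\mathrm{E}_1[\nu_b]\to\infty$ at the rate $a/(D(f_1,f_0)+|\log(1-\rho)|)$. Hence it suffices to show that $\mathrm{E}[T_b|\tau\geq\Gamma]$ stays bounded uniformly in $a$, so that it is absorbed into the $(1+o(1))$ factor.

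To bound $\mathrm{E}[T_b|\tau\geq\Gamma]$ I would decompose the path of $Z_k$ after $\Gamma$ into excursions below $b$, using the events $\mathcal{A},\mathcal{B},\mathcal{C}$ and the cycle stopping times $\lambda,\Lambda$ of Section~\ref{sec:Delay}. Two elementary observations then do the work. First, each excursion below $b$ is short: while $Z_k<b$ only the skip recursion \eqref{eq:recursionSkip} is applied, which drives $p_k$ monotonically towards $1$ with $1-p_{k+m}=(1-\rho)^m(1-p_k)$; hence at most $c_0:=\lceil\log(1-B)/\log(1-\rho)\rceil$ skip steps suffice to return $Z_k$ above $b$, no matter how far below $b$ the excursion began, and $c_0$ depends only on the fixed $B$ and $\rho$. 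Second, the number of excursions below $b$ after $\Gamma$ has bounded mean: the combined recursion $z\mapsto f(z)+\log L(X)$ with $f(z)=z+|\log(1-\rho)|+\log(1+\rho e^{-z})$ has $f$ increasing, so the event ``$Z_k\geq b$ for all $k$'' is monotone in the starting point; and under $\mathrm{P}_1$ the random walk $\sum_i(\log L(X_i)+|\log(1-\rho)|)$ has strictly positive drift $D(f_1,f_0)+|\log(1-\rho)|>0$, so from any point $\geq b$ the probability of climbing to $a$ without ever dropping below $b$ is at least $p^{\ast}:=\mathrm{P}_1(\hat{\lambda}=\infty)>0$. Consequently the number of excursions below $b$ after $\Gamma$ is stochastically dominated by $1+\text{Geom}(p^{\ast})$, giving $\mathrm{E}[T_b|\tau\geq\Gamma]\leq c_0\,(1+1/p^{\ast})$, which does not depend on $a$.

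Putting the pieces together gives $\mathrm{ANO}_1 = \mathrm{E}_1[\nu_b](1+o(1)) + O(1) = \mathrm{E}_1[\nu_b](1+o(1))$ as $a\to\infty$, because $\mathrm{E}_1[\nu_b]\to\infty$; the conditioning on $\{\tau\geq\Gamma\}$ is harmless since $\mathrm{PFA}\leq e^{-a}\to0$ forces $\mathrm{P}(\tau\geq\Gamma)\to1$, exactly as in the rest of the paper. I expect the only substantive step to be the uniform-in-$a$ bound on $\mathrm{E}[T_b|\tau\geq\Gamma]$; even that is largely a repackaging of what is already needed for Theorems~\ref{thm:ADDExact} and~\ref{lem:ADD_1}, where ``the time on cycles below $b$ is insignificant compared with $\tau-\Gamma$'' is the crux --- one essentially just has to confirm that the relevant constants, in particular $p^{\ast}$, are bounded away from $0$ independently of $a$.
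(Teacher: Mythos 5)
Your proposal is correct and follows essentially the same route as the paper: write $\mathrm{ANO}_1=\mathrm{E}[\tau-\Gamma\,|\,\tau\geq\Gamma]-T_b+1$, invoke Theorem~\ref{lem:ADD_1} for the delay term, and show the time spent below $b$ is negligible by bounding each excursion's length by $t(-\infty,b)$ and the number of excursions by a geometric random variable. The only (cosmetic) difference is that you bound $\mathrm{E}[N]$ directly via the uniform lower bound $p^{\ast}=\mathrm{P}_1(\hat{\lambda}=\infty)>0$, whereas the paper cancels $\mathrm{E}[N]$ against $\mathrm{E}_1[\tilde{\nu}_b]=\mathrm{E}_1[\lambda]\,\mathrm{E}[N]$ so that the ratio $T_b/\mathrm{E}[\tau-\Gamma\,|\,\tau\geq\Gamma]$ vanishes.
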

\begin{IEEEproof}
The number of observations used after $\Gamma$ can be written as the difference between the time for $Z_k$ to reach $a$ and the time spend by it below $b$. For this we define the variable \[T_b \stackrel{\triangle}{=} \mathrm{E}\left[\sum_{k=\Gamma}^{\tau} 1_{\{Z_k<b\}}\bigg| \tau \geq \Gamma\right].\]
Thus
\[\mathrm{ANO}_1 = \mathrm{E}\left[\tau-\Gamma| \tau \geq \Gamma\right]-T_b + 1.\]
We know from Theorem \ref{lem:ADD_1} that $\mathrm{E}\left[\tau-\Gamma| \tau \geq \Gamma\right] \approx \mathrm{E}_1[\nu_b]$.
As $a\to \infty$, $T_b$ converges, and therefore $\mathrm{ANO}_1 \approx \mathrm{E}_1[\nu_b]$ for large $a$ as well. The detailed proof is given in the appendix.
\end{IEEEproof}

\section{Approximations and Numerical results}
\label{sec:Approx_AND_Numerical}
In Sections \ref{sec:PFAAnalysis}-\ref{sec:Energy}, we have obtained asymptotic expressions for ADD, PFA, and ANO as a function of the system parameters: the thresholds $a$, $b$, the densities $f_0$ and $f_1$, and the prior $\rho$. We now provide approximations for some of the analytical expressions obtained in these sections, and also provide numerical results to validate the analysis. The observations are assumed to be Gaussian with $f_0 \sim {\cal N} (0,1)$, and $f_1 \sim {\cal N} (\theta,1)$, $\theta>0$, for the simulations and analysis. In the simulations, the PFA values are computed using the expression $\Expect[1-p_\tau]$. This guarantees a faster convergence for small values of PFA.

\subsection{Numerical results for $\mathrm{PFA}$}
\label{sec:NumericalPFA}
By Theorem \ref{thm:PFA}, we have the following approximation for PFA:
\[\text{PFA} \approx  e^{-a} \int_{0}^\infty e^{-x} dR(x). \]
We note that $\int_{0}^\infty e^{-x} dR(x)$ and $\bar{r}$ can be computed numerically, at least for Gaussian observations \cite{Siegmund}.
%If $c<a$, then $ \int_{a-c}^\infty e^{-x} dR(x)$ has to be estimated using Monte Carlo simulations.
In this section we provide numerical results to
show the accuracy of the above expression for PFA.

In Table \ref{Tab:PFAONLY} we compare the analytical approximation
with the PFA obtained using simulations of $\gamma(a,b)$ for various choices of $\rho$, thresholds $a,b$, and post change mean $\theta$.
%Note that in comparison with Table \ref{Tab:ADDPFA}, here $c<a$.
From the table we see that the analytical approximation is quite good.
\begin{table} [htbp]
 \centering
 \centering
 {%\scriptsize
 \caption{PFA: for $f_0 \sim {\cal N}(0,1)$, $f_1 \sim {\cal N}(\theta,1)$}
\label{Tab:PFAONLY}
\begin{tabular}{|l|l|l|l|l|l|l|l|}
\hline
        &      &   &   &$\mathrm{PFA}$        &$\mathrm{PFA     }$\\
$\theta$&$\rho$&$a$&$b$&$\mathrm{Simulations}$&$\mathrm{Analysis}$\\
\hline
0.4&0.01  &3.0  &0          &3.78$\times10^{-2}$      &3.94$\times10^{-2}$\\
0.4&0.01  &6.0  &2.0        &1.955$\times10^{-3}$     &1.96$\times10^{-3}$\\
0.75&0.01 &9.0  &-2.0       &7.968$\times10^{-5}$&7.964$\times10^{-5}$\\
2.0&0.01  &5.0  &-4.0       &2.15$\times10^{-3}$&2.155$\times10^{-3}$\\
0.75&0.005&7.6  &3.0        &3.231$\times10^{-4}$&3.235$\times10^{-4}$\\
0.75&0.1  &4.0  &-3.0       &1.143$\times10^{-2}$&1.157$\times10^{-2}$\\
\hline
\end{tabular}}
 \end{table}

In Table \ref{Tab:PFA}, we show that PFA is not a function of $b$ for large values of $a$. We fix $a=4.6$,
and increase $b$ from -2.2 to 0.85. We notice that PFA is unchanged
in simulations when $b$ is changed this way. This is also captured by the analysis and
it is quite accurate.
\begin{table} [htbp]
 \centering
 \centering
 {%\scriptsize
 \caption{PFA $\mathrm{for}$ $\rho=0.01$, $f_0 \sim {\cal N}(0,1)$, $f_1 \sim {\cal N}(0.75,1)$}
 \label{Tab:PFA}
\begin{tabular}{|l|l|l|l|}
\hline
\cline{1-4}
 $a$&$b$&$\mathrm{Simulations}$&$\mathrm{Analysis}$\\
\hline
4.6&-2.2&6.44$\times10^{-3}$&6.48$\times10^{-3}$\\
4.6&-1.5&6.44$\times10^{-3}$&6.48$\times10^{-3}$\\
4.6&-0.85&6.44$\times10^{-3}$&6.48$\times10^{-3}$\\
4.6&0&6.44$\times10^{-3}$&6.48$\times10^{-3}$\\
4.6&0.85&6.44$\times10^{-3}$&6.48$\times10^{-3}$\\
\hline
\end{tabular}}
 \end{table}

\subsection{Approximations and numerical results for $\mathrm{ANO}$ and $\mathrm{ANO}_1$}
\label{sec:ApproxANOANO1}
We recall the expressions for $\mathrm{ANO}$ from Theorem \ref{lem:Energy} and for $\mathrm{ANO}_1$ from Theorem \ref{lem:ANO1}:
\begin{eqnarray*}
\mathrm{ANO} &\approx& \frac{ \mathrm{E}_\infty[\hat{\lambda}]}{\mathrm{P}_\infty[\Gamma \leq \hat{\lambda} + t(Z_{\hat{\lambda}},b)]} \frac{1}{1+e^b} \\
\mathrm{ANO}_1 &=& \mathrm{E}_1[\nu_b].
\end{eqnarray*}

We first simplify the expression for $\mathrm{ANO}$. Note that
\begin{eqnarray*}
\mathrm{P}_\infty[\Gamma \leq \hat{\lambda} + t(Z_{\hat{\lambda}},b)] &=& 1 - \mathrm{P}_\infty[\Gamma >  \hat{\lambda} + t(Z_{\hat{\lambda}},b)] \\
&=&  1- \mathrm{E}_\infty[(1-\rho)^{\hat{\lambda} + t(Z_{\hat{\lambda}},b)}].
\end{eqnarray*}
Thus, using Binomial approximation we get
\[\mathrm{P}_\infty[\Gamma \leq \hat{\lambda} + t(Z_{\hat{\lambda}},b)] \approx \rho\left(\mathrm{E}_\infty[\hat{\lambda}] + \mathrm{E}_\infty[t(Z_{\hat{\lambda}},b)]\right).\]
Thus, we have
\begin{eqnarray}
\label{eq:ANOApprox}
\mathrm{ANO}  \approx \frac{\rho^{-1} \ \mathrm{E}_\infty[\hat{\lambda}]}{\mathrm{E}_\infty[\hat{\lambda}] + \mathrm{E}_\infty[t(Z_{\hat{\lambda}},b)]} \frac{1}{1+e^b}.
\end{eqnarray}
We now provide approximation to compute $\mathrm{E}_\infty[\hat{\lambda}]$ and $\mathrm{E}_\infty[t(Z_{\hat{\lambda}},b)]$ in \eqref{eq:ANOApprox}.
Invoking Wald's lemma \cite{Siegmund}, we write $\mathrm{E}_\infty[\hat{\lambda}]$ as,
\[\mathrm{E}_\infty[\hat{\lambda}] = \frac{\mathrm{E}_\infty[Z_{\hat{\lambda}}] - \mathrm{E}_\infty[\eta_{\hat{\lambda}}]}{-D(f_1, f_0) + |\log(1-\rho)|}.\]
We have developed the following approximation for $\Expect_\infty[\hat{\lambda}]$:
\begin{equation}
\label{eq:ANO0_Ladder}
\Expect_\infty[\hat{\lambda}] \approx \frac{\bar{r} + \log(1+\rho e^{-b})}{D(f_1, f_0) - |\log(1-\rho)|}.
\end{equation}
Here, $\log(1+\rho e^{-b})$ is an approximation to $\mathrm{E}_\infty[\eta_{\hat{\lambda}}]$ by ignoring all the random terms after $b$ is factored out of it.
This extra $b$ will cancel with the $b$ in $\mathrm{E}_\infty[Z_{\hat{\lambda}}] = b + \mathrm{E}_\infty[Z_{\hat{\lambda}}-b]$. We approximate $\mathrm{E}_\infty[b-Z_{\hat{\lambda}}]$ by $\bar{r}$, the mean overshoot of the random walk $\sum_{i=1}^k Y_k$, with mean $D(f_1, f_0) - |\log(1-\rho)|$, when it crosses a large boundary (see (\ref{eq:ZnEqSnEtan})).

For the term  $\Expect_\infty[t(Z_{\hat{\lambda}},b)]$, we have the following lemma.
%We first prove a lemma in which we obtain asymptotic upper and lower bounds on $t(x,y)$.
\begin{lemma}
\label{lem:txyExprn_ADD}
For fixed values of $x$ and $y$, we have
\begin{equation}
\label{eq:tXYExprn_ANO}
t(x,y) = \left(\frac{\log(1+e^y)-\log(1+e^x)}{|\log(1-\rho)|}\right)(1 + o(1)) \mbox{ as } \rho \to 0.
\end{equation}
\end{lemma}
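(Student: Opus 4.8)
The plan is to analyze the deterministic recursion \eqref{eq:ZkrecursionSkip} directly, since outside $[b,a)$ the process $Z_k$ follows a purely deterministic dynamics with no randomness. Recall that
\[
Z_{k+1} = Z_k + |\log(1-\rho)| + \log\left(1 + \rho\,e^{-Z_k}\right),
\]
so $t(x,y)$ is simply the number of steps this deterministic map takes to go from $x$ to just above $y$. The key observation is to identify the quantity that increments \emph{linearly} under this map. Writing $q_k$ for the probability corresponding to $Z_k$ (i.e.\ $Z_k = \log\frac{q_k}{1-q_k}$), the recursion \eqref{eq:recursionSkip} reads $q_{k+1} = q_k + (1-q_k)\rho$, equivalently $1 - q_{k+1} = (1-q_k)(1-\rho)$. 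Hence $\log(1-q_k)$ decreases by exactly $|\log(1-\rho)|$ at each step, and therefore after $m$ steps $\log(1-q_{k+m}) = \log(1-q_k) - m\,|\log(1-\rho)|$. This gives an \emph{exact} closed form: the number of steps to move from $q$-value $\frac{e^x}{1+e^x}$ to $q$-value $\frac{e^y}{1+e^y}$ is $\big(\log(1-\tfrac{e^x}{1+e^x}) - \log(1-\tfrac{e^y}{1+e^y})\big)/|\log(1-\rho)| = \big(\log(1+e^y) - \log(1+e^x)\big)/|\log(1-\rho)|$.

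The remaining issue is purely the integer-rounding: $t(x,y)$ is a \emph{count} of steps, so it equals the above expression rounded up to the nearest integer (with the convention in \eqref{eq:tXY} that we stop the first time $Z_k > y$). First I would write $t(x,y) = \big\lceil \frac{\log(1+e^y)-\log(1+e^x)}{|\log(1-\rho)|}\big\rceil + O(1)$, where the $O(1)$ absorbs the off-by-one coming from the strict inequality and the ceiling. Then, since $|\log(1-\rho)| \to 0$ as $\rho \to 0$ while $\log(1+e^y) - \log(1+e^x)$ is a fixed positive constant (for $y > x$), the leading term $\frac{\log(1+e^y)-\log(1+e^x)}{|\log(1-\rho)|} \to \infty$, so the additive $O(1)$ is negligible relative to it, which is exactly the statement $t(x,y) = \Big(\frac{\log(1+e^y)-\log(1+e^x)}{|\log(1-\rho)|}\Big)(1+o(1))$ as $\rho \to 0$.

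I do not expect any real obstacle here; the only mild subtlety is bookkeeping the boundary convention and making sure the $O(1)$ rounding error is genuinely bounded uniformly in $\rho$ (it is, since each step strictly decreases $\log(1-q_k)$ by the fixed amount $|\log(1-\rho)|$, so at most one extra step is ever needed beyond the real-valued count). One should also note the statement implicitly assumes $y \ge x$ (consistent with its use for $t(Z_{\hat\lambda},b)$ where $Z_{\hat\lambda} < b$), so that the logarithm difference is nonnegative and the $o(1)$ makes sense. With these remarks the proof is complete.
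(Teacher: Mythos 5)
Your proposal is correct and takes essentially the same route as the paper: the identity $1-q_{k+1}=(1-q_k)(1-\rho)$ is exactly the paper's observation that $1+e^{Z_{k+1}}=(1+e^{Z_k})/(1-\rho)$, yielding the same exact linear evolution of $\log(1+e^{Z_k})$, and your ceiling/$O(1)$ rounding argument plays the same role as the paper's explicit two-sided overshoot bounds on $Z_{t(x,y)}$. Both then conclude by noting the bounded boundary error is negligible against the leading term, which diverges as $\rho\to 0$.
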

\begin{proof}
The proof is provided in the appendix.
\end{proof}
We use \eqref{eq:tXYExprn_ANO} to get the following approximation:
\begin{equation}
\label{eq:ANO0_ZlambdatoB}
\Expect_\infty[t(Z_{\hat{\lambda}},b)] \approx \int_0^{\infty} \frac{\log(1+e^b) - \log(1+e^{b-x})}{|\log(1-\rho)|} dR(x).
\end{equation}
Thus, we approximate the distribution of $(b-Z_{\hat{\lambda}})$ by $R(x)$.
%As we will see in the next section, both of these approximations work well for Gaussian observations.

Based on the second order approximation for $\mathrm{E}_1[\nu_0]$ developed in \cite{VVV2005}, we have obtained the following approximation for $\mathrm{E}_1[\nu_b]$:
\begin{equation}
\label{eq:EddENU}
\mathrm{E}_1[\nu_b] = \frac{a - \mathrm{E}[\eta(b)] + \bar{r}}{D(f_1, f_0) + |\log(1-\rho)|} + o(1) \mbox{ as } a\to \infty,
\end{equation}
where, $\eta(b)$ is the a.s. limit of the slowly changing sequence $\eta_n$ with $Z_0=b$ under $f_1$, (see \eqref{eq:etan})% and \eqref{eq:Appenetan}),
and
\begin{equation}
\label{eq:MeanOvershoot}
\bar{r} = \int_0^\infty x dR(x),
\end{equation}
with $R(x)$ as in Theorem~\ref{thm:AsympOvershootDist}.

In Table \ref{Tab:ANO0ANO1} we demonstrate the accuracy of approximations for $\mathrm{ANO}$ and $\mathrm{ANO}_1$, for
various values of $\rho$, thresholds $a,b$, and post change mean $\theta$. The table shows that the approximations are quite accurate
for the parameters chosen.
\begin{table} [htbp]
 \centering
 \centering
 {%\scriptsize
 \caption{$f_0 \sim {\cal N}(0,1)$, $f_1 \sim {\cal N}(\theta,1)$}
\label{Tab:ANO0ANO1}
\begin{tabular}{|l|l|l|l|l|l|l|l|}
\hline
&&&&\multicolumn{2}{c|}{$\mathrm{ANO}$}&\multicolumn{2}{c|}{$\mathrm{ANO}_1$}\\
\cline{1-8}
$\theta$&$\rho$&$a$&$b$&$\mathrm{Simulations}$&$\mathrm{Analysis}$&$\mathrm{Simulations}$&$\mathrm{Analysis}$\\
\hline
0.4&0.01&8.5&-2.2&66.3&62.88&102.9&111.7\\
0.75&0.01&6.467&-2.2&34.92&34.24&27.86&29.46\\
2.0&0.01&7.5&-4.0&42.94&46.4&6.08&6.23\\
0.75&0.005&8.7&-3.0&77.18&75.09&38.73&40.38\\
0.75&0.1&8.5&0.0&2.64&3.2&21.17&22.18\\
\hline
\end{tabular}}
 \end{table}

\subsection{Approximations and numerical results for $\mathrm{ADD}$}
\label{sec:ApproxADD}
Theorem \ref{lem:ADD_1} gave a first order approximation for $\mathrm{E}[\tau-\Gamma| \tau \geq \Gamma]$:
\[\mathrm{E}[\tau-\Gamma| \tau \geq \Gamma] \approx \left[ \frac{a}{D(f_1, f_0) + |\log(1-\rho)|}\right].\]
Note that, from \cite{VVV2005}, this is also the first order approximation for the ADD of the Shiryaev algorithm,
and gives a good estimate of the delay when PFA is small. For the Shiryaev delay, a second order approximation was developed in \cite{VVV2005} (also see \eqref{eq:EddENU}):
\begin{equation*}
\mathrm{E}_1[\nu_0] = \left[ \frac{a-\Expect[\eta(-\infty)] + \bar{r}}{D(f_1, f_0) + |\log(1-\rho)|}\right] + o(1) \mbox{ as } a \to \infty.
\end{equation*}
So, instead of using $\frac{a}{D(f_1, f_0) + |\log(1-\rho)|}$, we propose to use the following:
\begin{equation}
\label{eq:ADDShiryAppr}
\mathrm{E}[\tau-\Gamma| \tau \geq \Gamma] \approx \left[ \frac{a-\Expect[\eta(-\infty)] + \bar{r}}{D(f_1, f_0) + |\log(1-\rho)|}\right].
\end{equation}

For the Shiryaev algorithm, \eqref{eq:ADDShiryAppr} provides a very good estimate of the delay even for moderate values of PFA.
In case of $\gamma(a,b)$, the accuracy of \eqref{eq:ADDShiryAppr} depends on the choice of $b$ and hence
on the constraint $\beta$, as having $b>-\infty$ increases the delay. Before we demonstrate this through numerical and simulation results we introduce the
following concept:
\begin{equation}
\mathrm{ANO}\% = \mbox{ ANO \textit{expressed as a percentage of }} \Expect[\Gamma].
\end{equation}
For example, if $\rho=0.05$, and for some choice of system parameters $\mathrm{ANO}=15$, then $\mathrm{ANO}\% = 15*0.05 = 75\%$. Thus, the concept of
$\mathrm{ANO}\%$ captures the reduction in the average number of observations used before change by employing $\gamma(a,b)$.

In Table \ref{Tab:ADDPFA} we provide various numerical examples where \eqref{eq:ADDShiryAppr} is a good approximation for
$\mathrm{E}[\tau-\Gamma| \tau \geq \Gamma]$. Since, \eqref{eq:ADDShiryAppr} is a good approximation for the Shiryaev delay as well, it
follows that, for these parameter values, the delay of $\gamma(a,b)$
is approximately equal to the Shiryaev delay. It might be intuitive that if we are aiming for
large $\mathrm{ANO}\%$ values of say 90\%, then the delay will be close to the Shiryaev delay. But values in Table \ref{Tab:ADDPFA} shows
that it is possible to achieve considerably smaller values of $\mathrm{ANO}\%$ without significantly affecting the delay.
\begin{table} [htbp]
 \centering
 \centering
 {%\scriptsize
 \caption{$f_0 \sim {\cal N}(0,1)$, $f_1 \sim {\cal N}(\theta,1)$}
\label{Tab:ADDPFA}
\begin{tabular}{|l|l|l|l|l|l|l|l|l|}
\hline
&&&&\multicolumn{2}{c|}{ADD}&\multicolumn{2}{c|}{PFA}&\multicolumn{1}{c|}{ANO\%}\\
\cline{1-8}
$\theta$&$\rho$&$a$&$b$&$\mathrm{Simulations}$                    &$\mathrm{Analysis}$                      &$\mathrm{Simulations}$&$\mathrm{Analysis}$&\\
        &      &   &   &$\Expect[\tau-\Gamma|\tau\geq \Gamma]$&\eqref{eq:ADDShiryAppr}&                      &                   &\\
\hline
0.4&0.01&8.5&-2.2&104.9&111.7&1.608$\times10^{-4}$&1.608$\times10^{-4}$&66\%\\
0.75&0.01&6.467&-2.2&32.3&29.5&1.002$\times10^{-3}$&1.004$\times10^{-3}$&35\%\\
2.0&0.01&7.5&-4.0&6.1&6.23&1.77$\times10^{-4}$&1.768$\times10^{-4}$&43\%\\
0.75&0.005&8.7&-3.0&42.6&40.4&1.076$\times10^{-4}$&1.076$\times10^{-4}$&77\%\\
0.75&0.1&8.5&0.0&23.9&22.18&1.286$\times10^{-4}$&1.285$\times10^{-4}$&26\%\\
\hline
\end{tabular}}
 \end{table}

However, if the $\mathrm{ANO}\%$ value is small, then this means that the value of $b$ is large,
and further that the delay is large. In this case, it might happen that \eqref{eq:ADDShiryAppr} is a good approximation only for
values of PFA which are very small. This is demonstrated in Table \ref{Tab:ADDANOPercDemo}. It is clear from the table that,
for the parameter values considered, estimating the delay with less than 10\% error is only possible at PFA values of the order of $\mathrm{PFA} \approx 10^{-22}$.

\begin{table} [htbp]
 \centering
  \centering
 {%\scriptsize
 \caption{$\rho=0.05$, $f_0 \sim {\cal N}(0,1)$, $f_1 \sim {\cal N}(0.75,1)$}
\label{Tab:ADDANOPercDemo}
\begin{tabular}{|l|l|l|l|l|l|l|}
\hline
   &   &Simulations                                &Analysis&&\\
$a$&$b$&$\Expect[\tau-\Gamma|\tau\geq \Gamma]$&\eqref{eq:ADDShiryAppr}&$\mathrm{ANO\%}$&$\mathrm{PFA}$\\
\cline{1-6}
5.0&1.0    &30   &13     &7.5\%&$4.3\times10^{-3}$\\
9.0&1.0    &42   &25     &7.5\%&$7.9\times10^{-5}$\\
13.0&1.0   &54   &37     &7.5\%&$1.4\times10^{-6}$\\
18.0&1.0   &69   &52     &7.5\%&$9.7\times10^{-9}$\\
50.0&1.0   &165  &149      &7.5\%&$1.23\times10^{-22}$\\
\hline
\end{tabular}}
 \end{table}

This motivates the need for a more accurate estimate of the delay. This is provided below.

From Theorem \ref{thm:ADDExact}, recall that we had the following three events:
\begin{eqnarray*}
\mathcal{A} &=& \{Z_\Gamma<b\},\\
\mathcal{B} &=& \{Z_\Gamma\geq b; Z_k \nearrow b\},\\
\mathcal{C} &=& \{Z_\Gamma\geq b; Z_k \nearrow a\}.
\end{eqnarray*}
As a first step towards the approximations, we ignore the event $\mathcal{B}$: $\Prob(\mathcal{B})\approx0$.
That is, we assume that if $Z_\Gamma > b$, then $Z_k$ climbs to $a$.
Define,
\[P_b = \mathrm{P}(Z_\Gamma \geq b | \tau \geq \Gamma).\]
Then \eqref{eq:CondDelayA},
\begin{equation}
\label{eq:NewADD_1}
\mathrm{E}[\tau-\Gamma| \tau \geq \Gamma] \approx P_b \ \mathrm{E}[\lambda(Z_\Gamma) | \mathcal{C}, \tau \geq \Gamma]
+ (1-P_b) (\mathrm{E}[t(Z_\Gamma,b)| \mathcal{A}, \tau \geq \Gamma] + \mathrm{ADD}^s).
\end{equation}
From Lemma \ref{eq:ADDsENuEqui}, it is easy to show the following:
\begin{eqnarray*}
\mathrm{ADD}^s
      = \mathrm{E}_1[\lambda| \{Z_\lambda > a\}] + \left(\mathrm{E}_1[\lambda| \{Z_\lambda < b\}] + \mathrm{E}_1[t(Z_\lambda, b) | \{Z_\lambda < b\}] \right) \frac{\mathrm{P}_1(Z_\lambda < b)}{1-\mathrm{P}_1(Z_\lambda < b)}.
\end{eqnarray*}
We now use the following approximations:
\begin{eqnarray*}
\mathrm{E}_1[\lambda| \{Z_\lambda > a\}]  \ &\approx& \ \mathrm{E}[\lambda(Z_\Gamma) | \mathcal{C}, \tau \geq \Gamma]
\ \approx \ \frac{a-\Expect[\eta(-\infty)] + \bar{r}}{D(f_1, f_0) + |\log(1-\rho)|},\\
\mathrm{E}_1[\lambda| \{Z_\lambda < b\}] &\approx& \frac{\bar{r} + \log(1+\rho e^{-b})}{D(f_1, f_0) - |\log(1-\rho)|},\\
\mathrm{E}_1[t(Z_\lambda, b) | \{Z_\lambda < b\}] &\approx& t(b-\bar{r}, b) \approx \frac{\log(1+e^b) - \log(1+e^{b-\bar{r}})}{|\log(1-\rho)|}.
\end{eqnarray*}
%\begin{remark}
%Note that with $\mathrm{E}[\lambda(Z_\Gamma) | \mathcal{C}] \approx \mathrm{E}_1[\nu_b]$, and $\mathrm{E}[t(Z_\Gamma,b)| \mathcal{A}]$ being independent of threshold $a$,
%this new approximation \eqref{eq:NewADD_1} reduces to the existing one, i.e. to $\mathrm{E}_1[\nu_b]$ for a fixed $b$ as $a\to \infty$: in this
%limit $\mathrm{ADD}^s$ approaches $\mathrm{E}_1[\nu_b]$ (Lemma \ref{eq:ADDsENuEqui}).
%\end{remark}
To compute \eqref{eq:NewADD_1}, we also need approximations for $\mathrm{P}_1(Z_\lambda < b)$, $P_b$ and $\mathrm{E}[t(Z_\Gamma,b)| \mathcal{A}]$.
Those are provided below.
Setting $a=\infty$ we have, by Wald's likelihood identity, Proposition 2.24, Pg 13, \cite{Siegmund},
\begin{equation*}
\mathrm{P}_1(Z_\lambda < b) = \mathrm{E}_\infty\left[\frac{f_1(X_1)\ldots f_1(X_\lambda)}{f_0(X_1)\ldots f_0(X_\lambda)}\right].
\end{equation*}
Under $\Prob_\infty$, $\lambda$ a.s. ends in $b$, and with high probability it takes very small values. Hence, this expressions can be computed using Monte Carlo simulations.
Further,
\begin{eqnarray*}
P_b &=&  \mathrm{P}(\Gamma > t(-\infty, b)) \mathrm{P}(Z_\Gamma > b | \Gamma > t(-\infty, b), \tau \geq \Gamma)\\
    &\approx& \frac{1}{1+e^b} \frac{\Expect_\infty[\hat{\lambda}]}{\mathrm{E}_\infty[\hat{\lambda}] + \mathrm{E}_\infty[t(Z_{\hat{\lambda}},b)]}.
\end{eqnarray*}
We already have the approximations for $\Expect_\infty[\hat{\lambda}]$ and $\mathrm{E}_\infty[t(Z_{\hat{\lambda}},b)]$ from Section \ref{sec:ApproxANOANO1}.
The approximation for $\mathrm{E}[t(Z_\Gamma,b)| \mathcal{A}]$ can be obtained as follows (all expectations conditioned on $\{\tau \geq \Gamma\}$):
\begin{eqnarray*}
(1-P_b)\mathrm{E}[t(Z_\Gamma,b)| \mathcal{A}] &=& (1-P_b)\mathrm{E}[t(Z_\Gamma,b)|\{Z_\Gamma<b\}]\\
&=&  \mathrm{E}[t(Z_\Gamma,b)|\{Z_\Gamma<b\} \cap \{\Gamma > t(-\infty, b)\}] \mathrm{P}(\{\Gamma > t(-\infty, b)\} \cap \{Z_\Gamma<b\}) \\
&&+ \mathrm{E}[t(Z_\Gamma,b)|\{Z_\Gamma<b\} \cap\{\Gamma \leq t(-\infty, b)\}]\mathrm{P}(\{\Gamma \leq t(-\infty, b)\} \cap \{Z_\Gamma<b\}).
\end{eqnarray*}
This can be computed using
\[ \mathrm{P}(\{\Gamma > t(-\infty, b)\} \cap \{Z_\Gamma<b\}) \approx \frac{1}{1+e^b}  \frac{\mathrm{E}_\infty[t(Z_{\hat{\lambda}},b)]}{\mathrm{E}_\infty[\hat{\lambda}] + \mathrm{E}_\infty[t(Z_{\hat{\lambda}},b)]},\] and
\[\mathrm{P}(\{\Gamma \leq t(-\infty, b)\} \cap \{Z_\Gamma<b\}) = \mathrm{P}(\{\Gamma \leq t(-\infty, b)\}) \approx \frac{e^b}{1+e^b}.\]
To compute conditional expectation of $t(Z_\Gamma,b)$, we need to subtract from $t(x,b)$, the mean of $\Gamma$ conditioned on $\{\Gamma \leq t(x,b)\}$. Specifically,
\[\mathrm{E}[t(Z_\Gamma,b)|\{Z_\Gamma<b\} \cap \{\Gamma > t(-\infty, b)\}] = t(b-\bar{r},b) - \frac{1}{\mathrm{P}(\Gamma \leq t(b-\bar{r},b))} \sum_{k=1}^{t(b-\bar{r},b)} k (1-\rho)^{k-1} \rho,\]
and,
\[\mathrm{E}[t(Z_\Gamma,b)|\{Z_\Gamma<b\} \cap\{\Gamma \leq t(-\infty, b)\}] = t(-\infty,b) - \frac{1}{\mathrm{P}(\Gamma \leq t(-\infty,b))} \sum_{k=1}^{t(-\infty,b)} k (1-\rho)^{k-1} \rho.\]
Thus we have obtained approximations for all the terms for the new approximation for $\Expect[\tau-\Gamma|\tau \geq \Gamma]$ in (\ref{eq:NewADD_1}).

In Table \ref{Tab:ADDANOPercDemo_NewADD}, we now reproduce Table \ref{Tab:ADDANOPercDemo} with a new column containing delay estimates computed using the new ADD (for $\Expect[\tau-\Gamma|\tau \geq \Gamma]$) approximation \eqref{eq:NewADD_1}. The values shows that all estimates are nearly within 10\% of the actual value.

In Table \ref{Tab:NewADDApprox}, we show the accuracy of the new ADD approximation \eqref{eq:NewADD_1}, for various values of the system parameters,
by comparing it with simulations and also with \eqref{eq:ADDShiryAppr}.
We also set PFA around $1\times 10^{-3}$.
The table clearly demonstrates that the new ADD approximation predicts the ADD with less than 10\% error.

\begin{table} [htbp]
 \centering
  \centering
 {%\scriptsize
 \caption{$\rho=0.05$, $f_0 \sim {\cal N}(0,1)$, $f_1 \sim {\cal N}(0.75,1)$}
\label{Tab:ADDANOPercDemo_NewADD}
\begin{tabular}{|l|l|l|l|l|l|l|}
\hline
   &   &Simulations                                &Analysis&New Analysis&&\\
$a$&$b$&$\Expect[\tau-\Gamma|\tau\geq \Gamma]$&\eqref{eq:ADDShiryAppr}&$\mathrm{ADD}$ from \eqref{eq:NewADD_1}&$\mathrm{ANO\%}$&$\mathrm{PFA}$\\
\cline{1-7}
5.0&1.0    &30    &13    &34     &7.5\%&$4.3\times10^{-3}$\\
9.0&1.0    &42    &25    &46     &7.5\%&$7.9\times10^{-5}$\\
13.0&1.0   &54    &37    &58     &7.5\%&$1.4\times10^{-6}$\\
18.0&1.0   &69    &52    &73     &7.5\%&$9.7\times10^{-9}$\\
50.0&1.0   &165   &149   &169     &7.5\%&$1.23\times10^{-22}$\\
\hline
\end{tabular}}
 \end{table}

\begin{table} [htbp]
 \centering
 %\hspace{1.7cm}
 %\begin{minipage}[t]{0.7\linewidth}
 \centering
 {%\scriptsize
 \caption{$f_0 \sim {\cal N}(0,1)$, $f_1 \sim {\cal N}(0.75,1)$, PFA $\approx 10^{-3}$, ANO=10\% of Shiryaev ANO}
 \label{Tab:NewADDApprox}
\begin{tabular}{|l|l|l|l|l|l|l|l|l|l|l|l|l|}
\hline
&&&\multicolumn{3}{c|}{$\mathrm{ADD}$}&\\
\cline{1-6}
$\rho$ & $a$ & $b$ & Simulations &      Analysis              &   Analysis      &    \\
       &     &     &             &  New \eqref{eq:NewADD_1}   &  \eqref{eq:ADDShiryAppr} &$\mathrm{ANO}\%$\\
\hline
0.01 & 6.4 & 2.7 & 250&260&14.42&0.33\%\\
0.005&6.45&0.6&181&190&22.09&1.5\%\\
0.001&6.47&-2.7&75&80&33.68&7.6\%\\
0.0005&6.47&-3.49&74&79&36.49&8.4\%\\
0.0001&6.47&-5.2&76&80&42.56&9.6\%\\
\hline
\end{tabular}}
 \end{table}

\section{Asymptotic Optimality and performance of $\gamma(a,b)$}
\label{sec:OptimalityofTwoThreshold}

\subsection{Asymptotic Optimality of $\gamma(a,b)$}
\label{sec:asympOptofgammaab}
In Theorem \ref{lem:ADD_1} we saw that for a fixed $b$ and $\rho$,
%\[\mathrm{E}[\tau-\Gamma| \tau \geq \Gamma] = \left[ \frac{c}{D(f_1, f_0) + |\log(1-\rho)|}\right]\big(1 + o(1)\big) \mbox{ as } c,a\to \infty.\]
%For $\gamma(a,b)$, $c=a$, and
\[\mathrm{E}[\tau-\Gamma| \tau \geq \Gamma] = \left[ \frac{a}{D(f_1, f_0) + |\log(1-\rho)|}\right]\big(1 + o(1)\big) \mbox{ as } a\to \infty.\]
We recall that from \cite{VVV2005}, this is also the asymptotic delay of the Shiryaev algorithm.

Moreover, from Theorem \ref{thm:PFA}, the PFA for $\gamma(a,b)$ is
\begin{equation*}
\label{eq:PFAgammaAB}
\mathrm{PFA} = \left(e^{-a} \int_{0}^\infty e^{-x} dR(x)\right) (1+o(1)) \mbox{ as } a\to \infty.
\end{equation*}
Again from \cite{VVV2005}, this is the PFA for the Shiryaev algorithm. We thus have the following asymptotic optimality result for $\gamma(a,b)$.

\begin{theorem}
\label{thm:gammaABOpt}
With $\gamma = \{\tau, S_1, \ldots, S_\tau\}$ define
\[\Delta(\alpha,\beta) = \{\gamma: \mathrm{PFA}(\gamma)\leq \alpha;\ \  \mathrm{ANO}(\gamma)\leq \beta\},\]
then for a fixed $\beta$ and $\rho$,
\begin{equation}
\label{eq:asympOpt}
\mathrm{ADD}(\gamma(a(\alpha,\beta), b(\alpha,\beta))) = \left[\inf_{\gamma \in \Delta(\alpha,\beta)}  \mathrm{ADD}(\gamma)\right](1+o(1)) \mbox{ as } \alpha\to 0.
\end{equation}
Here, for each $\alpha, \beta$, $b(\alpha,\beta)$ is the smallest $b$ such that
$\mathrm{ANO}(\gamma(a(\alpha,\beta), b(\alpha,\beta)))\leq \beta$ as $a\to \infty$.
\end{theorem}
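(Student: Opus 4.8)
The plan is to sandwich $\mathrm{ADD}(\gamma(a(\alpha,\beta),b(\alpha,\beta)))$ between a lower bound valid for \emph{every} feasible policy and a matching upper bound obtained from the asymptotic expressions already derived for the two-threshold algorithm. Both bounds will be shown to equal the asymptotic Shiryaev delay
\[
\frac{|\log\alpha|}{D(f_1,f_0)+|\log(1-\rho)|}\,(1+o(1)) \quad \text{as } \alpha\to 0,
\]
which immediately gives \eqref{eq:asympOpt}.

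For the lower bound, note that dropping the observation-cost constraint only enlarges the feasible set, i.e., $\Delta(\alpha,\beta)\subseteq\Delta(\alpha,\infty)$ for every $\beta$, so $\inf_{\gamma\in\Delta(\alpha,\beta)}\mathrm{ADD}(\gamma)\ \geq\ \inf_{\gamma\in\Delta(\alpha,\infty)}\mathrm{ADD}(\gamma)$. The problem on the right is precisely Shiryaev's problem \eqref{eq:basicproblem} with $\beta=\infty$, for which the Shiryaev stopping rule is optimal and, by \cite{VVV2005}, attains
\[
\inf_{\gamma\in\Delta(\alpha,\infty)}\mathrm{ADD}(\gamma)=\frac{|\log\alpha|}{D(f_1,f_0)+|\log(1-\rho)|}\,(1+o(1)) \quad \text{as } \alpha\to 0,
\]
which is the required lower bound.

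For the upper bound I would first fix the thresholds. By Theorem~\ref{thm:PFA}, $\mathrm{PFA}(\gamma(a,b))=\big(e^{-a}\int_0^\infty e^{-x}\,dR(x)\big)(1+o(1))$ as $a\to\infty$, independently of $b$; hence there is a choice $a(\alpha,\beta)=|\log\alpha|+O(1)$ (so $a(\alpha,\beta)\to\infty$ as $\alpha\to0$) for which $\mathrm{PFA}(\gamma(a(\alpha,\beta),b))\leq\alpha$ for all $\alpha$ small enough and all $b<a(\alpha,\beta)$. By Theorem~\ref{lem:Energy}, the limiting $\mathrm{ANO}$ of $\gamma(a,b)$ is, as $a\to\infty$, a function of $b$ and $\rho$ alone that is non-increasing in $b$; therefore the prescription ``$b(\alpha,\beta)=$ smallest $b$ with $\mathrm{ANO}(\gamma(a(\alpha,\beta),b))\leq\beta$'' selects a value that converges to a finite limit $b^\star(\beta,\rho)$ as $\alpha\to0$, and in particular stays bounded. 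Applying Theorem~\ref{lem:ADD_1} with this (eventually essentially fixed) $b$, together with \eqref{eq:CondEdd} to pass from $\mathrm{ADD}$ to the conditional delay, gives
\[
\mathrm{ADD}(\gamma(a(\alpha,\beta),b(\alpha,\beta)))=\frac{a(\alpha,\beta)}{D(f_1,f_0)+|\log(1-\rho)|}(1+o(1))=\frac{|\log\alpha|}{D(f_1,f_0)+|\log(1-\rho)|}(1+o(1))
\]
as $\alpha\to0$. Since $\gamma(a(\alpha,\beta),b(\alpha,\beta))\in\Delta(\alpha,\beta)$ by construction, combining this with the lower bound yields \eqref{eq:asympOpt}.

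The main obstacle I anticipate is the bookkeeping around $b(\alpha,\beta)$: one must verify that it does not drift to $-\infty$ as $\alpha\to0$ (which would reintroduce an unbounded correction into the second-order delay term), and that the $o(1)$ in Theorem~\ref{lem:ADD_1} is uniform over the relevant range of $b$ --- most cleanly by using the monotonicity and continuity of the limiting $\mathrm{ANO}$ in $b$ to conclude that $b(\alpha,\beta)$ converges, so Theorem~\ref{lem:ADD_1} need only be invoked at a single fixed $b$. A secondary point is ensuring genuine (not merely asymptotic) feasibility of the chosen thresholds for all sufficiently small $\alpha$, which follows by absorbing the $(1+o(1))$ factors in Theorems~\ref{thm:PFA} and~\ref{lem:Energy} into the $O(1)$ slack in $a(\alpha,\beta)$ and into the choice of the smallest admissible $b$.
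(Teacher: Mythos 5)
Your proposal is correct and follows essentially the same route as the paper: fix $b$ from the ANO constraint via Theorem~\ref{lem:Energy}, fix $a$ from the PFA constraint via Theorem~\ref{thm:PFA}, invoke Theorem~\ref{lem:ADD_1} to show the delay matches the Shiryaev delay, and use optimality of the Shiryaev rule on the enlarged class $\Delta(\alpha,\infty)$ as the lower bound. The paper's own proof is considerably terser and leaves implicit the sandwiching and the bookkeeping on $a(\alpha,\beta)$ and $b(\alpha,\beta)$ that you spell out, so your version is if anything more complete.
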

\begin{IEEEproof}
Fix $b$ such that $\mathrm{ANO}(\gamma(a,b)) \leq \beta$ as $a\to \infty$. It may happen that the constraint $\beta$ is not met with equality. Then we choose
the smallest $b$ which satisfies the constraint $\beta$ as $a\to \infty$. This choice of threshold $b$ is unique for a given $\beta$ because
ANO is not a function of threshold $a$ as $a\to \infty$.

As $a\to \infty$, the PFA and ADD both approach the Shiryaev PFA \eqref{eq:PFACequalA}
and Shiryaev delay \eqref{eq:ADDFirstOder}, respectively. Thus, as $a \to \infty$, $\gamma(a,b)$ is optimal over the class of all control policies $\Delta(\alpha,\beta)$
that satisfy the constraints $\alpha$ and $\beta$.
\end{IEEEproof}
%\begin{remark}
%If we select $b_1$ such that $\mathrm{ANO}\% < 1\%$ as $a\to \infty$, and then select $a_1$ such that $\mathrm{ADD}(\gamma(b_1,a_1))$ is within 1\% of $\frac{a}{D(f_1, f_0) + |\log(1-\rho)|}$.
%Then for all thresholds $d\geq a_1$, $\gamma(d,b_1)$ has $\mathrm{ANO}\% < 1\%$ and delay within 1\% of the Shiryaev delay. Thus, for small values of $\mathrm{PFA}$, as long as
%we are aiming for $\mathrm{ANO}\%$ of 1-100, no other control policy can outperform the two-threshold algorithm $\gamma(a,b)$.
%\end{remark}

\subsection{Trade-off curves: Performance of $\gamma(a,b)$ for a fixed and moderate $\alpha$}
\label{sec:DiscModAlpha}
Theorem \ref{thm:gammaABOpt} shows that for small values of PFA, $\gamma(a,b)$ is approximately optimal, i.e., it is not possible
to outperform $\gamma(a,b)$ by a significant margin.
But for moderate values of PFA, it is not clear if their exists algorithms which can significantly outperform $\gamma(a,b)$.
Our aim is to partially address this issue in this section.

In Fig. \ref{fig:TradeoffGauss} we plot the ANO-ADD trade-off for the two-threshold algorithm. Specifically, we compare
the two-threshold algorithm with the classical Shiryaev algorithm and study how much ANO can be reduced without significantly loosing in terms of ADD.
For Fig. \ref{fig:TradeoffGauss} we pick four values of $\rho: 0.05, 0.01, 0.005, 0.001$. For a fixed $\rho$, we fix $b=-\infty$ and select threshold
$a$ such that the $\mathrm{PFA}(\gamma(a,b))= 10^{-4}$. We then increase the threshold $b$ to have ANO\% values of $75\%, 50\%, 30\%, 15\%$.
We note that it was possible to reduce the ANO to 15\% of $\Expect[\Gamma]$ by increasing the threshold $b$ this way, without affecting
the probability of false alarm. Fig. \ref{fig:TradeoffGauss} shows that we can reduce ANO by up to 25\% while getting approximately the same ADD performance
as that of the Shiryaev algorithm. Moreover, if we allow for a 10\% increase in ADD compared to that of the Shiryaev algorithm,
then we can reduce ANO by up to 70\% (see plot for ANO\% =30\%). %If the change is rarer ($\rho=0.001$), then Fig. \ref{fig:3015PercANO} shows that we can reduce ANO by 70\% by allowing for 10\% excess ADD.
\begin{figure}[htb]
\center
%\hspace{3cm}
\includegraphics[width=11cm, height=7cm]{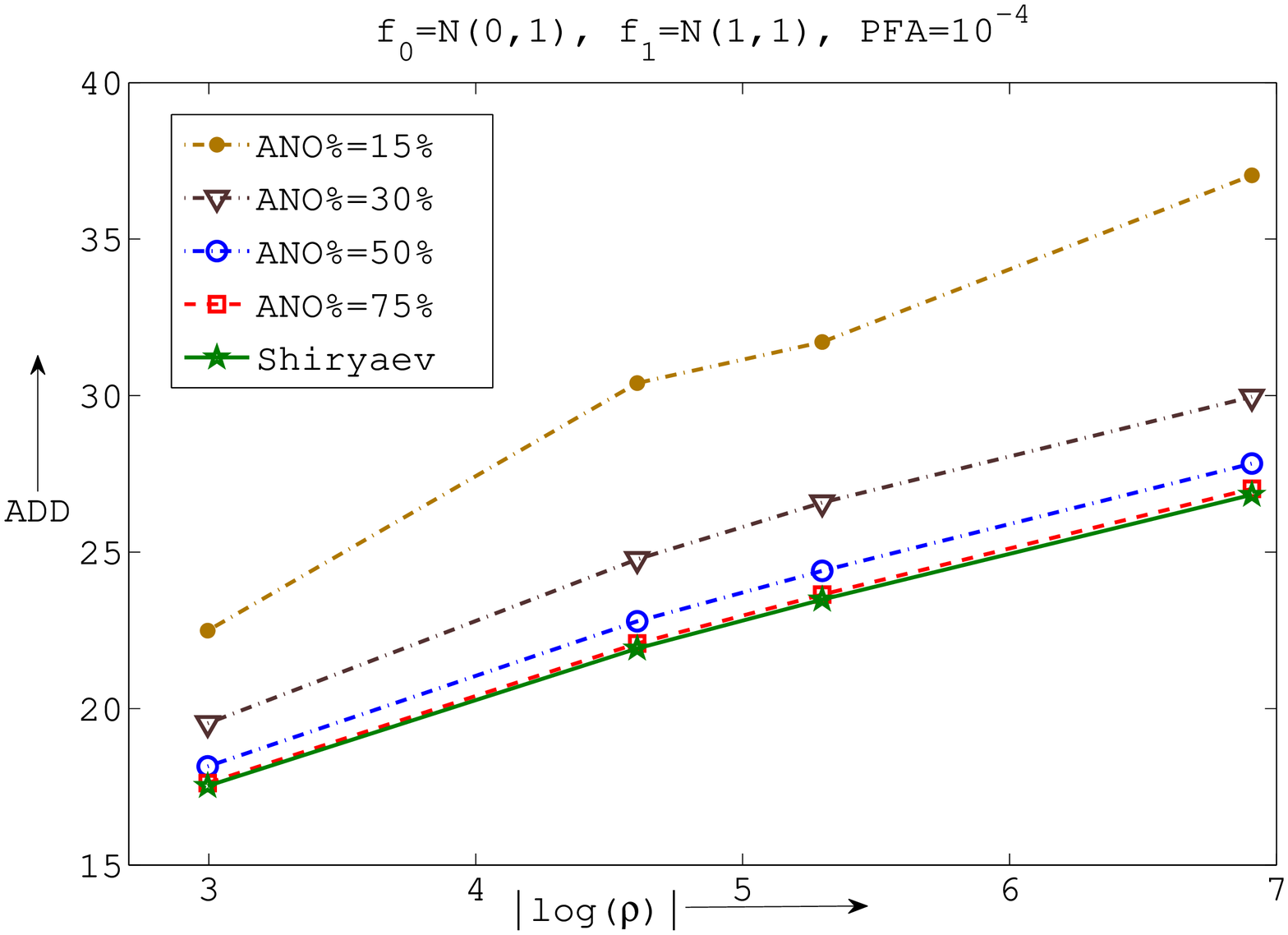}
\caption{Trade-off curves comparing performance of two-threshold algorithm with the Shiryaev algorithm for ANO\% of 75, 50, 30 and 15\%. $f_0 \sim {\cal N}(0,1)$, $f_1 \sim {\cal N}(1,1)$, and $\mathrm{PFA}=10^{-4}$.}
\label{fig:TradeoffGauss}
\end{figure}

Such a behavior was also observed in Table \ref{Tab:ADDPFA}, where we saw that the delay for $\gamma(a,b)$ %, and hence for $\gamma(a,b)$,
is approximately equal to the Shiryaev delay for moderate to large ANO\% values. Thus, for moderate PFA values, when the ANO\% is moderate to large,
$\gamma(a,b)$ is approximately optimal. %This is also supported by numerical studies of the Bellman equation \eqref{eq:Bellman},
%where all the cases where a three-threshold policy was found optimal, corresponded to very low ANO\% values.
%We now provide trade-off curves for $\gamma(a,b)$, providing more support
%to this observation.

\subsection{Comparison with fractional sampling}
\label{sec:FracSampleComp}

In this section we compare the performance of $\gamma(a,b)$ with the naive approach of fractional sampling, in which an ANO\% of $\epsilon$\% is achieved by employing Shiryaev algorithm and using a sample with probability $\epsilon$. Also, in fractional sampling, when a sample is skipped, the posterior probability $p_k$ is updated using \eqref{eq:recursionSkip}.
Figure \ref{fig:CompareWithFracSample50Perc} compares the two schemes for ANO\% of 50\%. %while Figure \ref{fig:CompareWithFracSample25Perc} compares them for ANO\% of 25\%.
We also plot the performance of the Shiryaev algorithm for the same values of PFA and $\rho$.
The figure shows that $\gamma(a,b)$ helps in reducing the observation cost by a significant margin as compared to the fractional sampling scheme.
%We note that the
%ADD of the fractional sampling scheme is not twice of the Shiryaev ADD.

From our approximations, we know that for large $a$
\[\mathrm{ADD(\gamma(a,b))} \approx \frac{a}{D(f_1, f_0) + |\log(1-\rho)|}.\]
When the K-L distance $D(f_1, f_0)$ dominates the sum $D(f_1, f_0) + |\log(1-\rho)|$, then we would expect that any scheme that ignores the past observations
for observation control will perform poorly as compared to the one that relies on the state of the system to decide whether or not to take a sample in the next time slot.
This is verified by the figure: as $\rho\to 0$,
we see a significant difference in performances of $\gamma(a,b)$ and the fractional sampling scheme.
The figure also shows that as $\rho$ becomes large, and begins to dominate the sum $D(f_1, f_0) + |\log(1-\rho)|$, the ADD performance of the
fractional sampling scheme approach that of the two-threshold algorithm $\gamma(a,b)$.
\begin{figure}[htb]
\center
%\hspace{3cm}
\includegraphics[width=11cm, height=7cm]{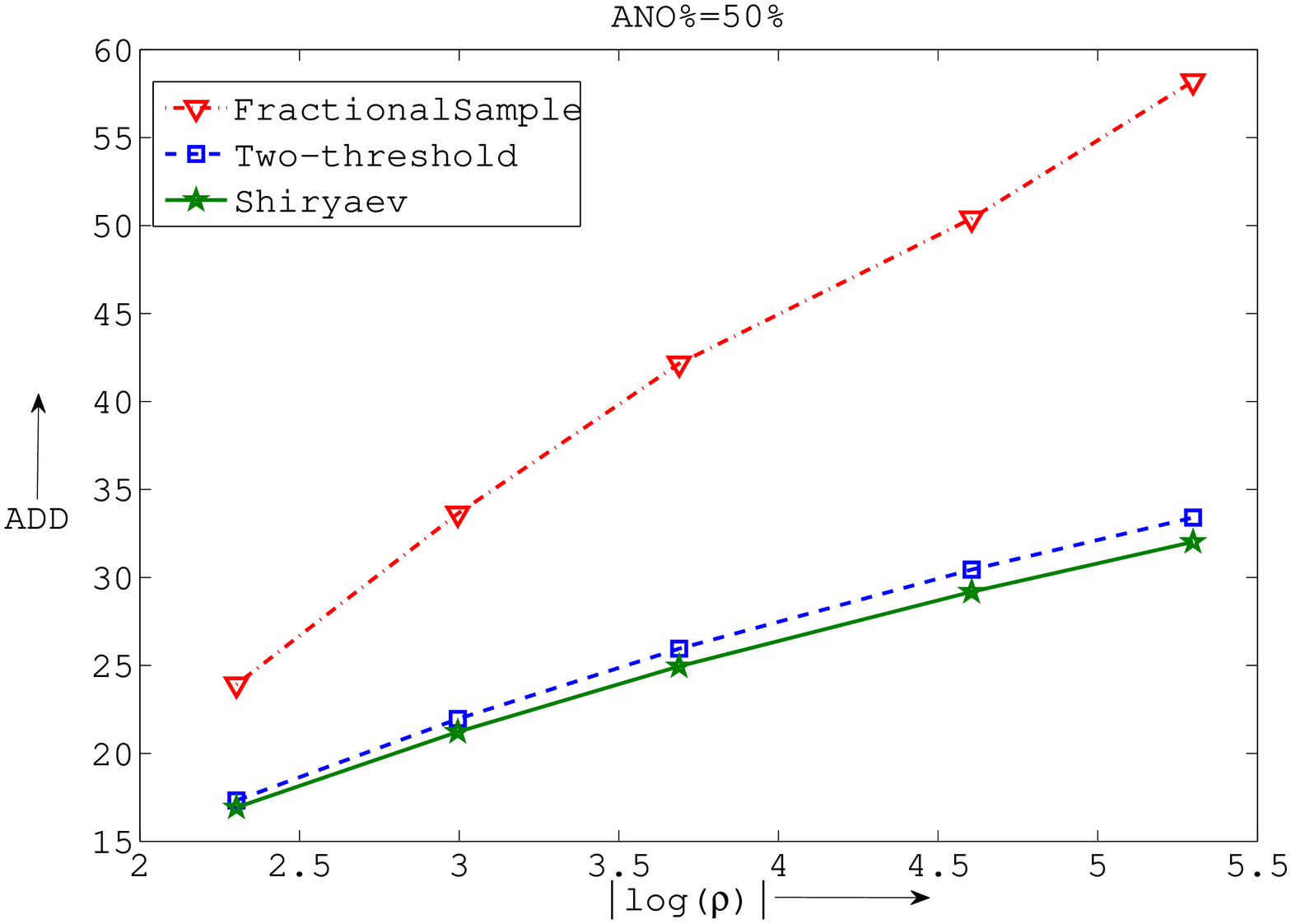}
\caption{Trade-off curves comparing performance of the two-threshold algorithm with the Fractional Sampling Scheme for ANO\% 50\%.  $f_0 \sim {\cal N}(0,1)$, $f_1 \sim {\cal N}(0.75,1)$, and $\mathrm{PFA}=10^{-3}$.}
\label{fig:CompareWithFracSample50Perc}
\end{figure}
%
%\begin{figure}[htb]
%\center
%%\hspace{3cm}
%\includegraphics[width=11cm, height=7cm]{Shiryaev_DEShiryaev_FracSampl_25Perc.eps}
%\caption{Trade-off curves comparing performance of the two-threshold algorithm with the Fractional Sampling Scheme for ANO\% 25\%.  $f_0 \sim {\cal N}(0,1)$, $f_1 \sim {\cal N}(0.75,1)$, and $\mathrm{PFA}=10^{-3}$.}
%\label{fig:CompareWithFracSample25Perc}
%\end{figure}

\section{Conclusions}
\label{sec:Conclusion}
We posed a data-efficient version of the classical Bayesian quickest change detection problem, where we control the number of observations taken before the change occurs.
We obtained a two-threshold Bayesian algorithm that is asymptotically optimal, has good trade-off curves and is easy to design.
%We supported our claim via analytical and simulation results.
We derived analytical approximations for the ADD, PFA and ANO performance of the two-threshold algorithm using which we can design the algorithm by choosing the thresholds.
In particular, we showed that, %there is a unique pair of thresholds that meets a given set of constraints of probability of false alarm and observation cost.
when the constraint on the PFA is moderate to small and that on the ANO is not very small, the two-thresholds can be set independent of each other.
We also provided extensive numerical and simulation results that validate our analysis. Our results indicate that our two-threshold algorithm can significantly save on the number of observations taken before the change, while maintaining the delay relatively unchanged. A comparison with the naive approach of fractional sampling shows that the two-threshold algorithm is indeed very efficient in using observations to detect the change. Our two-threshold algorithm has many engineering applications in settings where an abrupt change has to be detected in a process under observation, but there is a cost
associated with acquiring the data needed to make accurate decisions.

%In the absence of knowledge of the prior on $\Gamma$,
An important problem for future research is to see if two-threshold policies are optimal in
non-Bayesian (e.g., minimax) settings, where we do not have a prior on $\Gamma$. In particular,
it is of interest to understand how to update the algorithm metric in a non-Bayesian setting when we skip an observation. From an application point of view,
one can design a two-threshold algorithm based on the Shiryaev-Roberts or CUSUM approaches \cite{TartaMoustaStateOftheArt},
and use the undershoot of the metric when it goes below the threshold `$b$', to design the off times.
%and compare its performance with the Bayesian test.
Furthermore, if we are able to find useful lower bounds on delay for given false alarm and ANO constraints, we may be able to use these to prove asymptotic optimality of such heuristic algorithms, as is done for the standard quickest change detection problem \cite{VVV2005}, \cite{Lai}. Also, such lower bounds
can possibly help in obtaining insights for cases where the observations are not i.i.d. \cite{VVV2005}, \cite{Lai}.
Other interesting problems in this area include the design of data-efficient optimal algorithms for robust change detection and nonparametric change detection.

%\onecolumn
\section*{Appendix to Section \ref{sec:Overshoot}}

\begin{IEEEproof}[Proof of Theorem \ref{thm:AsympOvershootDist}]
We first show that $\eta_n$ with $b=-\infty$, and $Z_0$ a random variable, is a slowly changing sequence.
Let $Z_0$ takes value $z_0$, then
\begin{eqnarray*}
\eta_n = \log\left[e^{z_0} + \sum_{k=0}^{n-1} \rho (1-\rho)^k \prod_{i=1}^k \frac{f_0(X_i)}{f_1(X_i)}\right] \xrightarrow[n\to \infty]{\mathrm{P}_1- a.s.} \log\left[e^{z_0} + \sum_{k=0}^{\infty} \rho (1-\rho)^k \prod_{i=1}^k \frac{f_0(X_i)}{f_1(X_i)}\right].% \stackrel{\triangle}{=} \eta(Z_0).
\end{eqnarray*}
Define
\[
\eta(Z_0) \stackrel{\triangle}{=}  \log\left[e^{Z_0} + \sum_{k=0}^{\infty} \rho (1-\rho)^k \prod_{i=1}^k \frac{f_0(X_i)}{f_1(X_i)}\right].
\]

Note that $\eta(Z_0)$ as a function of $Z_0$ is well defined and finite under $\Prob_1$. This is because by Jensen's inequality,
for $Z_0 = z_0$,
\begin{eqnarray*}
\mathrm{E}[\eta(z_0)] &\leq& \log \left[ e^{z_0} + \sum_{k=0}^{\infty} \rho (1-\rho)^k \mathrm{E}_1 \left(\prod_{i=1}^k \frac{f_0(X_i)}{f_1(X_i)}\right)\right]\\
&=& \log \left[ e^{z_0} + \sum_{k=0}^{\infty} \rho (1-\rho)^k\right] = \log \left( e^{z_0} + 1 \right).
\end{eqnarray*}
Thus
%$\eta_n$ converges $\Prob_1$-a.s. to $\eta(Z_0)$ when $b=-\infty$. %Thus the series converges if $\mathrm{E}[e^{Z_0}]$ is finite and the sequence $Z_k$ is generated from an i.i.d sequence with density $f_1$.
\begin{equation}
\label{eq:Appenetan}
\eta_n \xrightarrow[b=-\infty]{\mathrm{P}_1- a.s.} \eta(Z_0)
= \log\left(e^{Z_0} + \rho\right) + \sum_{k=1}^\infty \log\left(1 + e^{-Z_k}\rho\right).
\end{equation}
This implies $\sum_{k=1}^\infty \log\left(1 + e^{-Z_k}\rho\right)$ converges a.s. for i.i.d. $\{X_k\}$ and $b=-\infty$. This series will also converge with probability 1
if we condition on a set with positive probability. %This is because if a series converges almost surely, then it diverges only on a set of probability zero.

Let change happen at $\Gamma=l$. We set $Z_0 = Z_\Gamma = Z_l$ and assume that $\{X_k\}$, $k\geq1$ have density $f_1$, which would happen after $\Gamma$.
We first show that starting with the above $Z_0$, the sequence $\eta_n$ generated in \eqref{eq:etan} is slowly changing. % in the sense defined as follows.

To verify the first condition \eqref{eq:SlowlyCngCond1}, from \eqref{eq:etan} note that,
\[n^{-1} \max\{ |\eta_1|, \ldots, |\eta_n|\} \leq n^{-1} \left[|\log\left(e^{Z_0} + \rho\right)| + \sum_{k=1}^{n-1} \log\left(1 + e^{-Z_k}\rho\right) + \sum_{k=1}^n\left(|\log L(X_{k})|\right)\indic_{\{Z_k < b\}} \right].\]
Since, $Z_k \to \infty$ a.s., $\log\left(1 + e^{-Z_k}\rho\right) \to 0$, also, $\indic_{\{Z_k < b\}} \to 0$ a.s.
Thus both the sequences $\{\log\left(1 + e^{-Z_k}\rho\right)\}$ and $\{\left(|\log L(X_{k})|\right)\indic_{\{Z_k < b\}}\}$ are Cesaro summable and have Cesaro sum of zero.
Thus the term inside the square bracket above, when divided by $n$, goes to zero a.s. and hence also in probability. Thus the first condition is verified.

To verify the second condition \eqref{eq:SlowlyCngCond2}, we first obtain a bound on $|\eta_{n+k} - \eta_{n}|$.
\[|\eta_{n+k} - \eta_{n}| \leq \sum_{i=n}^{n+k-1} \log\left(1 + e^{-Z_i}\rho\right) + \sum_{i=n+1}^{n+k} \left(|\log L(X_{i})|\right)\indic_{\{Z_k < b\}}.\]
Thus,
\[\max_{1\leq k\leq n\delta} |\eta_{n+k} - \eta_{n}| \leq \sum_{i=n}^{n+n\delta-1} \log\left(1 + e^{-Z_i}\rho\right) + \sum_{i=n+1}^{n+n\delta} \left(|\log L(X_{i})|\right)\indic_{\{Z_k < b\}} \stackrel{\triangle}{=} d_n^1 + d_n^2.\]
Here, for convenience of computation, we use $ d_n^1$ and  $d_n^2$ to represent the first and second partial sums respectively. Now,
\begin{eqnarray*}
\mathrm{P} \{\max_{1\leq k\leq n\delta} |\eta_{n+k} - \eta_{n}| > \epsilon\}  \leq \mathrm{P} (d_n^1 + d_n^2 > \epsilon),
\end{eqnarray*}
and we bound the probability $\mathrm{P} (d_n^1 + d_n^2 > \epsilon)$ as follows.

On the event that $E \stackrel{\triangle}{=} \{Z_k \geq b, \forall k\geq 0\}$, $d_n^2$ is identically zero, thus for $n$ large enough,
\begin{eqnarray*}
\mathrm{P} (d_n^1 + d_n^2 > \epsilon | E) = \mathrm{P} (d_n^1 > \epsilon | E) < \epsilon.
\end{eqnarray*}
This is because $d_n^1$ behaves like a partial sum of a series of type in (\ref{eq:Appenetan}).
Since the series in (\ref{eq:Appenetan}) converges if random variables are generated i.i.d. $f_1$, it will also converge if conditioned on the event
$E$.
%This is because if a series converges almost surely (for i.i.d. sequence),
%then it also converges when conditioned on a set of positive probability.
Thus, the partial sum $d_n^1$ converges to 0 almost surely, and hence converges to 0 in probability, i.e., $\mathrm{P} (d_n^1 > \epsilon |E) \to 0$. Select,
$n=n_1^*$ such that $\forall n > n_1^*$, $\mathrm{P} (d_n^1 > \epsilon | E) < \epsilon$.

Define
\[L_Z = \sup\{k\geq 1: Z_{k-1} < b, Z_k\geq b\},\]
with $L_Z=\infty$ if no such $k$ exists. On the event $E'$, which is the compliment of $E$, $L_Z$ is a.s. finite.
Then, by noting that $d_n^2=0$ for $L_Z < n$, we get for $n$ large enough,
\begin{eqnarray*}
\mathrm{P} (d_n^1 + d_n^2 > \epsilon | E') \stackrel{\triangle}{=} \mathrm{P}_{E'} (d_n^1 + d_n^2 > \epsilon)  &\leq& \mathrm{P}_{E'} (d_n^1 + d_n^2 > \epsilon; L_Z \geq n) +
\mathrm{P}_{E'} (d_n^1 + d_n^2 > \epsilon; L_Z < n)\\
&\leq& \mathrm{P}_{E'} (L_Z \geq n) +  \mathrm{P}_{E'} (d_n^1 + d_n^2 > \epsilon; L_Z < n)\\
&=& \mathrm{P}_{E'} (L_Z \geq n) +  \mathrm{P}_{E'} (d_n^1 > \epsilon; L_Z < n)\\
&\leq& \mathrm{P}_{E'} (L_Z \geq n) +  \mathrm{P}_{E'} (d_n^1 > \epsilon | L_Z < n)\\
&<& \epsilon/2 + \epsilon/2 = \epsilon.
\end{eqnarray*}
Since, $L_Z$ is almost surely finite, $\mathrm{P}_{E'} (L_Z \geq n) \to 0$ as $n\to \infty$. Thus we can select $n=n_2^*$ such that $\forall n > n_2^*$,
$\mathrm{P}_{E'} (L_Z \geq n) < \epsilon/2$.
For the second term, note that conditioned on $L_Z < n$, $d_n^1$ behaves like a partial sum of a series of type in (\ref{eq:Appenetan}), with $Z_0$ replaced by $Z_{L_Z}$.
Since the series in (\ref{eq:Appenetan}) converges if random variables are generated i.i.d. $f_1$ beyond $L_Z$, it will also converge if conditioned on the event
$\{L_Z < n\}$.
%This is because if a series converges almost surely (for i.i.d. sequence),
%then it also converges when conditioned on a set of positive probability.
Thus, the partial sum $d_n^1$ converges to 0 almost surely, and hence converges to 0 in probability, i.e., $\mathrm{P}_{E'} (d_n^1 > \epsilon | L_Z < n) \to 0$. Select,
$n=n_3^*$ such that $\forall n > n_3^*$, $\mathrm{P} (d_n^1 > \epsilon | L_Z < n) < \epsilon/2$. Then $n^* = \max\{n_1^*, n_2^*, n_3^*\}$, is the desired $n^*$ and pick any $\delta>0$.
Then for $n > n^*$,
\begin{eqnarray*}
\mathrm{P} (d_n^1 + d_n^2 > \epsilon) &=& \mathrm{P} (d_n^1 + d_n^2 > \epsilon | E) \mathrm{P} (E) + \mathrm{P} (d_n^1 + d_n^2 > \epsilon | E') \mathrm{P} (E')\\
&<&  \epsilon  \mathrm{P} (E) + \epsilon   \mathrm{P} (E') <  \epsilon.
\end{eqnarray*}

Since the sequence $\eta_n$ is slowly changing, according to \cite{Siegmund}, the asymptotic distribution of the overshoot when $Z_k$ crosses a large boundary under
$f_1$ is $R(x)$.
Thus we have the following result,
\[\lim_{a\to \infty} \mathrm{P}_\ell\left[Z_{\tau}- a \leq x | \tau \geq l \right] = R(x),\]
where $P_\ell$ is the probability measure with change happening at $l$.
Now,
\[\mathrm{P}\left[Z_{\tau}- a \leq x | \tau \geq \Gamma \right] = \sum_{l=1}^\infty \mathrm{P}_l\left[Z_{\tau}- a \leq x | \tau \geq l \right] \mathrm{P}(\Gamma=l| \tau \geq \Gamma), \]
and
\[\lim_{a\to \infty} \mathrm{P}_l\left[Z_{\tau}- a \leq x | \tau \geq l \right] \mathrm{P}(\Gamma=l| \tau \geq \Gamma) = R(x) \mathrm{P}(\Gamma=l) \leq 1.\]
Hence we have the desired result by dominated convergence theorem.
%Since this is true for all $k$, based on the arguments given in Theorem 5 in \cite{VVV2005}, we have the desired result.
\end{IEEEproof}

\section*{Appendix to Section \ref{sec:PFAAnalysis}}

\begin{IEEEproof}[Proof of Lemma \ref{lem:PFAeq}]
Since, $p_\tau > A$ imply $Z_\tau>a$, we have,
\[\frac{1}{1+e^{-Z_\tau}} \geq \frac{1}{1+e^{-a}}.\]
The required result is obtained by obtaining upper and lower bounds on PFA as follows.
\begin{eqnarray*}
\mathrm{PFA} = \mathrm{E}[1-p_{\tau}] = \mathrm{E}\left[\frac{1}{1+e^{Z_\tau}}\right] \leq \mathrm{E}\left[e^{-Z_\tau}\right].
\end{eqnarray*}
Also, %since $q_\tau > \frac{A}{1-A}$, multiplying and dividing by $q_\tau$ we get,
\begin{eqnarray*}
\mathrm{PFA} = \mathrm{E}[1-p_{\tau}] = \mathrm{E}\left[\frac{1}{1+e^{Z_\tau}}\right]
&=& \mathrm{E}\left[\frac{1}{e^{Z_\tau}} \frac{1}{1+e^{-Z_\tau}}\right] \\
&\geq& \mathrm{E}\left[\frac{1}{e^{Z_\tau}} \frac{1}{1+e^{-a}}\right] = \mathrm{E}\left[e^{-Z_\tau}\right](1+o(1)) \mbox{ as } a \to \infty.
\end{eqnarray*}
Thus,
\[\mathrm{PFA} = \mathrm{E}[e^{-Z_{\tau}}](1+o(1)) = e^{-a} \mathrm{E}[e^{-(Z_{\tau} - a)}](1+o(1)) \mbox{ as } a \ \to \infty.\]
Now note that,
\[\mathrm{E}[e^{-(Z_{\tau} - a)}] = \mathrm{E}[e^{-(Z_{\tau} - a)}| \tau \geq \Gamma](1-\mathrm{P}(\tau < \Gamma))
+ \mathrm{E}[e^{-(Z_{\tau} - a)}|  \tau < \Gamma] \mathrm{P}(\tau < \Gamma).\]
Since, $\mathrm{P}(\tau < \Gamma) = \mathrm{E}[1-p_{\tau}] \leq 1-A \leq e^{-a}$, we can write,
\[\mathrm{PFA} = e^{-a} \mathrm{E}[e^{-(Z_{\tau} - a)}|  \tau \geq \Gamma](1+o(1)) \ \ \ \ \ \  \mbox{ as } a \ \to \infty.\]
This proves the lemma.
\end{IEEEproof}

\section*{Appendix to Section \ref{sec:Delay}}

\begin{IEEEproof}[Proof of Theorem \ref{thm:ADDExact}]
Each time $Z_k$ crosses $b$ from below, is satisfies
\[b \ < \ \ Z_k \ \ \leq \ \ b + \log\frac{1}{1-\rho}  + \log(1+e^{-b}\rho).\]
Define, $b_1 \stackrel{\triangle}{=} b + \log\frac{1}{1-\rho}  + \log(1+e^{-b}\rho)$. Then $b_1 \to b \mbox{ as }  \rho \to 0$.
Also, each time $Z_k$ crosses $b$ from below, the average time for $Z_k$ to reach $a$ can be decreased by setting $Z_k=b_1$ and increased by setting $Z_k=b$.
Let, $N$ ($N_1$) be one plus the number of times $Z_k$ goes below $b$ before it
crosses $a$, when it is reset to $b$ ($b_1$), each time it crosses $b$ from below.

Now recall the three disjoints events:
\begin{eqnarray*}
\mathcal{A} &=& \{Z_\Gamma<b\},\\
\mathcal{B} &=& \{Z_\Gamma\geq b; Z_k \nearrow b\},\\
\mathcal{C} &=& \{Z_\Gamma\geq b; Z_k \nearrow a\}.
\end{eqnarray*}
We can write,
\begin{eqnarray}
\label{eq:contionalDelayABC}
\mathrm{E}[\tau-\Gamma| \tau \geq \Gamma] &=& \mathrm{E}[\tau-\Gamma; \mathcal{A} | \tau \geq \Gamma]
+ \mathrm{E}[\tau-\Gamma;  \mathcal{B}| \tau \geq \Gamma]
+ \mathrm{E}[\tau-\Gamma; \mathcal{C}| \tau \geq \Gamma].
\end{eqnarray}
Now consider each of the three terms on the right hand side of the above equation.

Under the event $\mathcal{A}$, the process $Z_k$
starts below $b$ and reaches $a$ after multiple up-crossings of the threshold $b$.
%Let these up-crossing times be $\{t_1, t_2, \ldots\}$, with $t_{N_1}$ being the last up-crossing instant.
Then,
\begin{equation}
\label{eq:CondDelayUB}
\mathrm{E}[\tau-\Gamma; \mathcal{A}| \tau \geq \Gamma] \leq \mathrm{E}[t(Z_\Gamma,b)| \mathcal{A}, \tau \geq \Gamma] \ \ \mathrm{P}(\mathcal{A}| \tau \geq \Gamma)
 + \mathrm{E}_1\left[ \sum_{k=1}^{N} \Lambda_k(b)\right]
\mathrm{P}(\mathcal{A}| \tau \geq \Gamma).
\end{equation}
This upper bound was obtained by resetting $Z_k$ to $b$ each time it crosses $b$ from below. Similarly, we can get a lower bound by setting $Z_k=b_1$ each time $Z_k$ crosses $b$ from below. Thus,
\begin{equation*}
%\label{eq:CondDelayA}
\mathrm{E}[\tau-\Gamma; \mathcal{A}| \tau \geq \Gamma] \geq \mathrm{E}[t(Z_\Gamma,b)| \mathcal{A}, \tau \geq \Gamma] \ \ \mathrm{P}(\mathcal{A}| \tau \geq \Gamma)
 + \mathrm{E}_1\left[ \sum_{k=1}^{N_1} \Lambda_k(b_1)\right]
\mathrm{P}(\mathcal{A}| \tau \geq \Gamma).
\end{equation*}

Now by Wald's lemma \cite{Siegmund},
\begin{eqnarray*}
\mathrm{E}_1\left[ \sum_{k=1}^{N_1} \Lambda_k(b_1)\right] &=& \mathrm{E}_1[N_1] \mathrm{E}_1[\Lambda(b_1)]\\
                                                          & \xrightarrow[\rho\to 0]{}& \mathrm{E}_1[N] \mathrm{E}_1[\Lambda(b)]
                                                           \hspace{0.2cm}= \hspace{0.2cm}\mathrm{E}_1\left[ \sum_{k=1}^{N} \Lambda_k(b)\right] = \mathrm{ADD}^s.
                                                          \end{eqnarray*}
Thus,
\begin{equation*}
\begin{split}\mathrm{E}[\tau-\Gamma; \mathcal{A}| \tau \geq \Gamma]  =& \bigg[ \mathrm{E}[t(Z_\Gamma,b)| \mathcal{A}, \tau \geq \Gamma]
\ \ \mathrm{P}(\mathcal{A}| \tau \geq \Gamma) \bigg. \\
 &+ \bigg. \mathrm{ADD}^s  \ \ \mathrm{P}(\mathcal{A}| \tau \geq \Gamma) \bigg] \big(1+o(1)\big) \mbox{ as } \rho\to 0.
 \end{split}
 \end{equation*}
Under the event $\mathcal{B}$, the process $Z_k$ starts above $b$ and crosses $b$ before $a$. It then has multiple
up-crossings of $b$, similar to the case of event $\mathcal{A}$. Arguing in a similar manner, we get
\begin{equation*}
\begin{split}\mathrm{E}[\tau-\Gamma; \mathcal{B} | \tau \geq \Gamma]   =&
\bigg[ \mathrm{E}[\Lambda(Z_\Gamma) | \mathcal{B}, \tau \geq \Gamma]  \ \ \mathrm{P}(\mathcal{B}| \tau \geq \Gamma) \bigg. \\
 &+ \bigg. \mathrm{ADD}^s  \ \ \mathrm{P}(\mathcal{B}| \tau \geq \Gamma) \bigg] \big(1+o(1)\big) \mbox{ as } \rho\to 0.
 \end{split}
 \end{equation*}
Similarly, considering the event $\mathcal{C}$, we get
\begin{equation*}
\mathrm{E}[\tau-\Gamma; \mathcal{C} | \tau \geq \Gamma]
= \bigg[ \mathrm{E}[\Lambda(Z_\Gamma) | \mathcal{C}, \tau \geq \Gamma] \ \ \mathrm{P}(\mathcal{C}| \tau \geq \Gamma) \bigg] \big(1+o(1)\big) \mbox{ as } \rho\to 0.
 \end{equation*}

Substituting in \eqref{eq:contionalDelayABC} we get the desired result \eqref{eq:CondDelayA}.
\end{IEEEproof}

\begin{IEEEproof}[Proof of Lemma \ref{eq:ADDsENuEqui}]
Based on $\Psi$, we define two new recursions, one in which the evolution of $Z_k$ is truncated at $b$,
\begin{eqnarray*}
\label{eq_appen:NewRecur1}
\tilde{\Psi}(Z_k) = \left\{ \begin{array}{ll}
               \Psi(Z_k) \ \ \ \  \mbox{ if } \Psi(Z_k) \geq b\\
               b         \ \ \ \ \ \ \ \ \ \ \ \mbox{ if } \Psi(Z_k) < b,
               \end{array}       \right.
\end{eqnarray*}
and, another in which the overshoot is ignored each time the Shiryaev recursion crosses $b$ from below,
\begin{eqnarray*}
\label{eq_appen:NewRecur2}
\hat{\Psi}(Z_k) = \left\{ \begin{array}{ll}
               b \ \ \ \ \ \ \ \ \ \ \ \ \  \mbox{ if } Z_k < b \mbox{ and } \Psi(Z_k) \geq b\\
               \Psi(Z_k)         \ \ \ \ \ \ \mbox{ otherwise }.
               \end{array}       \right.
\end{eqnarray*}
Based on these two recursions we define two new stopping times:
\begin{eqnarray*}
\tilde{\nu}_b &=& \inf\{k\geq 1:  \tilde{\Psi}(Z_{k-1}) > a, Z_0 = b\},\\
\hat{\nu}_b &=& \inf\{k\geq 1:  \hat{\Psi}(Z_{k-1}) > a, Z_0 = b\}.
\end{eqnarray*}
These two stopping times stochastically upper and lower bound the Shiryaev stopping time $\nu_b$ defined in \eqref{eq:ShiryaevNuBtoC}, i.e.,
\begin{eqnarray}
\label{eq:ENUInequalities}
%\tilde{\nu}_b \hspace{-0.2cm}&\leq&  \hspace{-0.2cm}\nu_b \leq \hat{\nu}_b\\
\mathrm{E}_1[\tilde{\nu}_b] \hspace{-0.2cm}&\leq&  \hspace{-0.2cm}\mathrm{E}_1[\nu_b] \leq \mathrm{E}_1[\hat{\nu}_b].
\end{eqnarray}
Recall from \eqref{eq:nuxy} that
\[\nu(x,y) = \inf\{k \geq 1: \Psi(Z_{k-1}) > y, Z_0 = x\}.\]
%Clearly $\nu_b = \NU(b,c)$.
Using Wald's lemma \cite{Siegmund}, we can get the following expressions:
\begin{equation}
\label{eq:NutildeNuCap}
\mathrm{E}_1[\tilde{\nu}_b] = \frac{\mathrm{E}_1[\lambda]}{\mathrm{P}_1(Z_\lambda>a)}, \hspace{2cm} \mathrm{E}_1[\hat{\nu}_b] = \frac{\mathrm{E}_1[\lambda] + \mathrm{E}_1[\nu(Z_\lambda, b) ; \{Z_\lambda < b\}] }{\mathrm{P}_1(Z_\lambda > a)}.
\end{equation}
Multiplying and dividing $\mathrm{ADD}^s$ by $\mathrm{E}_1[\lambda]$ we get
\begin{eqnarray*}
\mathrm{ADD}^s %&=& \frac{\mathrm{E}_1[\lambda] + \mathrm{E}_1[t(Z_\lambda, b) ; \{Z_\lambda < b\}]}{\mathrm{P}_1(Z_\lambda > c)} \frac{\mathrm{E}_1[\lambda]}{\mathrm{E}_1[\lambda]}\\
&=& \frac{\mathrm{E}_1[\lambda] + \mathrm{E}_1[t(Z_\lambda, b) ; \{Z_\lambda < b\}]}{\mathrm{E}_1[\lambda]} \frac{\mathrm{E}_1[\lambda]}{\mathrm{P}_1(Z_\lambda > a)} \\
&=& \mathrm{E}_1[\tilde{\nu}_b] \frac{\mathrm{E}_1[\lambda] + \mathrm{E}_1[t(Z_\lambda, b) ; \{Z_\lambda < b\}]}{\mathrm{E}_1[\lambda]}\\
%&=& \mathrm{E}_1[\tilde{\nu}_b] \left(1 + \frac{\mathrm{E}_1[t(Z_\lambda, b) ; \{Z_\lambda < b\}]}{\mathrm{E}_1[\lambda]}\right)\\
&=& \mathrm{E}_1[\tilde{\nu}_b] (1 + o(1))\ \ \ \ \mbox{ as } a\to \infty.
\end{eqnarray*}
The last equality follows because $\Expect_1[\lambda]\to \infty$ as $a\to \infty$, while $\Expect_1[t(Z_\lambda, b) ; \{Z_\lambda < b\}]$ is not a function of $a$.
Similarly, multiplying and dividing $\mathrm{ADD}^s$ by $\mathrm{E}_1[\lambda] + \mathrm{E}_1[\nu(Z_\lambda, b) ; \{Z_\lambda < b\}]$ we get
\begin{eqnarray*}
\mathrm{ADD}^s = \mathrm{E}_1[\hat{\nu}_b] \left(1 + o(1) \right) \ \ \ \ \mbox{ as } \ \ a\to \infty.
\end{eqnarray*}
Using these two expressions for $\mathrm{ADD}^s$ and the relationship that
$\mathrm{E}_1[\tilde{\nu}_b] \leq \mathrm{E}_1[\nu_b] \leq \mathrm{E}_1[\hat{\nu}_b]$, we have,
%\begin{eqnarray*}
%\mathrm{E}_1[\nu_b] \left(1 + o(1) \right) \leq \mathrm{E}_1[\hat{\nu}_b] \left(1 + o(1) \right) = \mathrm{ADD}^s =  \mathrm{E}_1[\tilde{\nu}_b] (1 + o(1)) \leq  \mathrm{E}_1[\nu_b] (1 + o(1)).
%\end{eqnarray*}
%Thus,
\[\mathrm{ADD}^s = \mathrm{E}_1[\nu_b](1 + o(1)) \mbox{ as } a \to \infty.\]
\end{IEEEproof}

\begin{IEEEproof}[Proof of Theorem \ref{lem:ADD_1}]
Consider the upper bound \eqref{eq:CondDelayUB}:
\begin{equation*}
%\label{eq:CondDelayUB}
\mathrm{E}[\tau-\Gamma; \mathcal{A}| \tau \geq \Gamma] \leq \mathrm{E}[t(Z_\Gamma,b)| \mathcal{A}, \tau \geq \Gamma] \ \ \mathrm{P}(\mathcal{A}| \tau \geq \Gamma)
 + \mathrm{ADD}^s \ \ \mathrm{P}(\mathcal{A}| \tau \geq \Gamma).
\end{equation*}
Similarly, the upper bounds corresponding to the other two events $\mathcal{B}$ and $\mathcal{C}$ are:
\begin{eqnarray*}
\mathrm{E}[\tau-\Gamma; \mathcal{B} | \tau \geq \Gamma] \leq \mathrm{E}[\Lambda(Z_\Gamma) | \mathcal{B}, \tau \geq \Gamma] \ \mathrm{P}(\mathcal{B}| \tau \geq \Gamma) +
\mathrm{ADD}^s \ \ \mathrm{P}(\mathcal{B}| \tau \geq \Gamma)
\end{eqnarray*}
and,
\begin{eqnarray*}
\mathrm{E}[\tau-\Gamma; \mathcal{C}| \tau \geq \Gamma] &=& \mathrm{E}[\Lambda(Z_\Gamma) | \mathcal{C}, \tau \geq \Gamma] \ \ \mathrm{P}(\mathcal{C}| \tau \geq \Gamma) \\
&\leq& \mathrm{E}_1[\Lambda(b) | Z_{\Lambda(b)} >a] \ \ \mathrm{P}(\mathcal{C}| \tau \geq \Gamma) \\
&\leq& \mathrm{ADD}^s \ \ \mathrm{P}(\mathcal{C}| \tau \geq \Gamma).
\end{eqnarray*}
%The last inequality follows because $\mathrm{E}_1\left[ \sum_{k=1}^{N} \Lambda_k\right]$ has the term $\Lambda_N$, which is identically distributed with $\Lambda(b)$,
%and may have some more terms if $N>1$.
Substituting in \eqref{eq:contionalDelayABC} we get,
\begin{eqnarray}
\label{eq:tauCminusGammma}
\mathrm{E}[\tau-\Gamma| \tau \geq \Gamma] &=& \mathrm{E}[\tau-\Gamma; \mathcal{A} | \tau \geq \Gamma] + \mathrm{E}[\tau-\Gamma;  \mathcal{B}| \tau \geq \Gamma]
+ \mathrm{E}[\tau-\Gamma; \mathcal{C}| \tau \geq \Gamma]. \nonumber \\
&\leq&  \mathrm{ADD}^s  + \mathrm{E}[t(Z_\Gamma,b)| \mathcal{A}, \tau \geq \Gamma] + \mathrm{E}[\Lambda(Z_\Gamma) | \mathcal{B}, \tau \geq \Gamma].
\end{eqnarray}

In equation (\ref{eq:tauCminusGammma}), we observe that except for $\mathrm{ADD}^s$, other terms are not a
function of threshold $a$. Thus we have
\[\mathrm{E}[\tau-\Gamma| \tau \geq \Gamma] \leq \mathrm{ADD}^s\left(1 + o(1) \right) \ \  \mbox{ as } a \ \to \infty.\]
\end{IEEEproof}

\begin{IEEEproof}[Proof of Lemma \ref{lem:txyExprn_ADD}]
%We use $\mathcal{t}$ to represent $t(x,y)$ for simplicity.
First note that by definition \eqref{eq:tXY}, $Z_{t(x,y)} > y \geq Z_{t(x,y)-1}$. Also, from (\ref{eq:ZkrecursionSkip})
\begin{eqnarray*}
Z_{t(x,y)} &=& Z_{t(x,y)-1}  + \log\frac{1}{1-\rho} + \log(1+e^{-Z_{t(x,y)-1}}\rho) \\
%      &=& \log\frac{1}{1-\rho} + \log(e^{Z_{t-1}}+\rho) \\
%&\leq& \log\frac{1}{1-\rho}  + \log(e^{y}+\rho).\\
%&=& y + \log\frac{1}{1-\rho}  + \log(1+e^{-y}\rho).
&\leq& y + \log\frac{1}{1-\rho}  + \log(1+e^{-y}\rho).
\end{eqnarray*}
Thus
\[y<Z_{t(x,y)} \leq y + \log\frac{1}{1-\rho}  + \log(1+e^{-y}\rho), \]
equivalently
\[e^y<e^{Z_{t(x,y)}} \leq e^y \frac{1}{1-\rho}  (1+e^{-y}\rho).\]
Further, the recursion (\ref{eq:ZkrecursionSkip}) can be written in terms of $e^{Z_k}$ for $k\geq0$:
\[e^{Z_{k+1}} = \frac{\rho + e^{Z_k}}{1-\rho}.\]
Using this we can write an expression for $e^{Z_{t(x,y)}}$:
\[e^{Z_{t(x,y)}} = \frac{e^x}{(1-\rho)^t} + \sum_{k=1}^{t(x,y)} \frac{\rho}{(1-\rho)^k} = \frac{e^x+1}{(1-\rho)^{t(x,y)}} - (1-\rho).\]
Using the bounds for $Z_{t(x,y)}$ obtained above, we get
\[ e^y < \frac{e^x+1}{(1-\rho)^{t(x,y)}} - (1-\rho)\leq e^y \frac{1}{1-\rho}  (1+e^{-y}\rho).\]
This gives us bounds for $t(x,y)$:
\begin{equation}
\label{eq:tXYbounds}
\frac{\log(1+e^y-\rho) - \log(1+e^x)}{|\log(1-\rho)|} \leq t(x,y) \leq \frac{\log\left(1+e^y\frac{(1+e^{-y}\rho)}{(1-\rho)}-\rho\right) - \log(1+e^x)}{|\log(1-\rho)|}.
\end{equation}
%By keeping $\rho$ fixed and taking $x,y \to \infty$ we get \eqref{eq:tXYExprn_ADD}, and
By keeping $x,y$ fixed and taking $\rho\to0$ we get \eqref{eq:tXYExprn_ANO}.
\end{IEEEproof}

\vspace{0.5cm}

%\begin{IEEEproof}[Proof of Theorem \ref{thm:CtoA}]
%Let
%\[R_c(x) = \Prob(Z_{\tau}-c \leq x | \tau \geq \Gamma).\]
%%If $R_c(x)$ is the distribution of $Z_{\tau}-c$, for a given $c$, over $\{\tau \geq \Gamma\}$,
%Then
%\[\mathrm{E}[\tau-\tau| \tau \geq \Gamma] = \int_0^{a-c} t(c+x,a)dR_c(x).\]
%By using \eqref{eq:tXYExprn_ADD} from Lemma \ref{lem:txyExprn_ADD}, and noting that $\lim_{c\to\infty} R_c(x) = R(x)$, we get \eqref{eq:EDDCtoA1}.
%In \eqref{eq:EDDCtoA1}, as $c,a\to \infty$, if $a-c\to \infty$, or $a-c\to \mbox{ constant }>0$ and $\rho\to 0$, then $\int_0^{a-c} \frac{a-c-x}{|\log(1-\rho)|} dR(x)$ dominates $R(a-c)$ in the limit and we get \eqref{eq:EDDCtoA2}.
%\end{IEEEproof}

\section*{Appendix to Section \ref{sec:Energy}}

\begin{IEEEproof}[Proof of Lemma \ref{lem:ANOBeforeGamma}]
Each time $Z_k$ crosses $b$ from below, is satisfies:
\[b \ < \ \ Z_k \ \ \leq \ \ b + \log\frac{1}{1-\rho}  + \log(1+e^{-b}\rho).\]
Define, $b_1 \stackrel{\triangle}{=} b + \log\frac{1}{1-\rho}  + \log(1+e^{-b}\rho)$. Then $b_1 \to b \mbox{ as }  \rho \to 0$.
Also, each time $Z_k$ crosses $b$ from below, the average number of observations used before $\Gamma$ can be increased by setting $Z_k=b_1$ and decreased by setting $Z_k=b$.
This is because of the geometric nature of change. Let $Z_k=x$ when it crosses $b$ from below, and suppose we reset $Z_k$ to $b_1$.
Then, the number of observations used before change, on an average, would be the number of observations used before $Z_k$ reaches $x$ from $b_1$,
plus the number of observations used there onwards as if the process started at $x$. Similar reasoning can be given to explain why the average
number of observations used decreases, if we reset $Z_k$ to $b$, each time it crosses $b$ from below.

Define the following stopping time:
\[\hLam^x = \inf\{k \geq 1: Z_{k-1}<b \mbox{ and } Z_k\geq b \mbox{ or } k\geq \Gamma, Z_0=x \geq b, a=\infty \}.\]
Thus, $\hLam^x$ is the time for $Z_k$, to start at $Z_0=x$ with $a=\infty$, and stop the first time, either $Z_k$ approaches $b$ from below, or when change happens.
Also, let $\delta^x \in (0,1)$ be such that $\hLam^x \delta^x$ is the number of observations used before $Z_k$ was stopped by $\hLam^x$, i.e., fraction of $\hLam^x$ when $Z_k\geq b$.
If $\{\hLam_k^b\}$ and $\{\hLam_k^{b_1}\}$ be sequences with distribution of $\hLam^b$ and $\hLam^{b_1}$ respectively and if $L^x$ is the number of times $Z_k$
crosses $b$ from below and is set to $x$ at each such instant, then,
\begin{eqnarray*}
\mathrm{E}_\infty[L^b] \ \ \mathrm{E}_\infty[\hLam^b \delta^b] = \mathrm{E}_\infty\left[ \sum_{k=1}^{L^{b}} \hLam_k^{b} \delta_k^{b} \right]
&\leq& \mathrm{E}\left[\sum_{k=t(b)}^{\Gamma-1} S_k \bigg| \Gamma > t(b), a=\infty \right] \\
&\leq&\mathrm{E}_\infty\left[ \sum_{k=1}^{L^{b_1}} \hLam_k^{b_1} \delta_k^{b_1} \right] = \mathrm{E}_\infty[L^{b_1}] \ \ \mathrm{E}_\infty[\hLam^{b_1} \delta^{b_1}].
\end{eqnarray*}
Here the equalities follows from Wald's lemma \cite{Siegmund}.

In the above, $L^{x}$ is $\mathrm{Geom}(\mathrm{P}_\infty[\Gamma \leq \hLam^x])$, and hence $\mathrm{E}_\infty[L^{b_1}] = \frac{1}{\mathrm{P}_\infty[\Gamma \leq \hLam^{b_1}]}$.
Also note that
\[\frac{\mathrm{P}_\infty[\Gamma \leq \hLam^{b_1}]}{\mathrm{P}_\infty[\Gamma \leq \hLam^b]} \to 1 \mbox{ as }  \rho \to 0.\]
Further, for $x=b_1$ or $x=b$, define $\hat{\lambda}(x)$ based on \eqref{eq:LambdaInfty} as
\[\hat{\lambda}(x) = \inf\{k \geq 1: Z_k < b, Z_0=x\geq b, a=\infty\}.\]
It is clear that $\hat{\lambda}(b)=\hat{\lambda}$. Thus we have, for both $x=b_1$ and $x=b$,
\begin{eqnarray*}
\mathrm{E}_\infty[\hLam^x \delta^x] &=& \mathrm{E}_\infty[\hLam^x \delta^x|\Gamma \leq \hLam^x \delta^x]\mathrm{P}_\infty[\Gamma \leq \hLam^x\delta^x] + \mathrm{E}_\infty[\hLam^x \delta^x| \Gamma > \hLam^x \delta^x]\mathrm{P}_\infty[\Gamma > \hLam^x\delta^x]\\
                      &\to& \mathrm{E}_\infty[\hat{\lambda}(x)] \mbox{ as }  \rho \to 0.
\end{eqnarray*}
Here, the result follows because as $\rho \to 0$, $\hLam^x \delta^x$ converges a.s. to a finite limit and $\mathrm{P}_\infty[\Gamma \leq \hLam^x\delta^x] \to 0$. Also for the same reason,
$\mathrm{P}_\infty[\Gamma > \hLam^x\delta^x] \to 1$ as $\rho\to 0$. Moreover, since $b_1 \to b$ as $\rho\to 0$, we have as $\rho\to 0$
\[\mathrm{E}_\infty[\hat{\lambda}(b_1)] \to \mathrm{E}_\infty[\hat{\lambda}(b)] = \mathrm{E}_\infty[\hat{\lambda}].\]
Thus,
\[\mathrm{E}\left[\sum_{k=t(b)}^{\Gamma-1} S_k\Bigg| \Gamma > t(b), a=\infty \right]=\frac{\mathrm{E}_\infty[\hat{\lambda}]}{\mathrm{P}_\infty[\Gamma \leq \hLam^b]}(1+o(1)) \ \ \ \mbox{ as }  \rho \to 0. \]
\end{IEEEproof}

\begin{IEEEproof}[Proof of Lemma \ref{lem:ANOProb}]
Since $\Prob\{ \tau \geq \Gamma\} \to 1$ as $a \to \infty$,
\begin{eqnarray*}
\mathrm{P}(\Gamma > t(b)| \tau \geq \Gamma ) &=&  \mathrm{P}(\Gamma > t(b))  + o(1) \ \ \ \mbox{ as } \ \ a \to \infty \\
&=& \frac{1}{1+z_0}(1-\rho)^{t(b)} + o(1) \ \ \ \mbox{ as } \ \ a \to \infty.
\end{eqnarray*}
From \eqref{eq:tXYExprn_ANO} in Lemma \ref{lem:txyExprn_ADD}, with $y=b$ and $x=z_0$, we have
\begin{equation*}
t(z_0,b) = \left(\frac{\log(1+e^b)-\log(1+e^{z_0})}{|\log(1-\rho)|}\right)(1 + o(1)) \mbox{ as } \rho \to 0.
\end{equation*}
From this, it is easy to show that
\begin{eqnarray*}
(1-\rho)^{t(b)} \to \left(\frac{1+e^{z_0}}{1+e^b}\right) \ \ \ \mbox{ as } \rho \to 0.
\end{eqnarray*}
By substituting this in the expression for $\mathrm{P}(\Gamma > t(b)| \tau \geq \Gamma )$ we get the desired result.
\end{IEEEproof}

\begin{IEEEproof}[Proof of Theorem \ref{lem:ANO1}]
Using Theorem \ref{lem:ADD_1} we write $\mathrm{ANO}_1$ as
\begin{eqnarray*}
\mathrm{ANO}_1 &=& \Expect\left[\tau-\Gamma| \tau \geq \Gamma\right]\left(1-\frac{T_b-1}{\mathrm{E}\left[\tau-\Gamma| \tau \geq \Gamma\right]}\right)\\
               &=& \Expect_1[\nu_b]\left(1-\frac{T_b-1}{\mathrm{E}\left[\tau-\Gamma| \tau \geq \Gamma\right]}\right) (1 + o(1)) \ \ \ \mbox{ as } \ \ a \to \infty.
\end{eqnarray*}
We now obtain an upper bound on $\frac{T_b-1}{\mathrm{E}\left[\tau-\Gamma| \tau \geq \Gamma\right]}$ which goes to zero as $a \to \infty$.

Recall that $\mathcal{A}$ and $\mathcal{B}$ are the events under which excursions below $b$ are possible. The passage to $a$
is through multiple cycles below $b$, and the time spend below $b$ in each cycle can be bounded by $t(-\infty, b)$. Define $N_{\mathcal{A}}$ and $N_{\mathcal{B}}$ as
one plus the number of cycles below $b$, under events $\mathcal{A}$ and $\mathcal{B}$ respectively. Then,
\[T_b-1 \leq T_b \leq  \mathrm{P}_1(\mathcal{A}) t(-\infty,b)\mathrm{E}[N_{\mathcal{A}}] +   \mathrm{P}_1(\mathcal{B}) t(-\infty,b)\mathrm{E}[N_{\mathcal{B}}].\]
The averages $\mathrm{E}[N_{\mathcal{A}}]$ and $\mathrm{E}[N_{\mathcal{B}}]$
can be written as a series of probabilities, where each term correspond to the event that $Z_k$ goes below $b$, and not above $a$, each time it crosses $b$ from below.
Each of these probabilities can be maximized by setting $Z_k$ to $b$, each time it crosses $b$ from below. Hence, $\mathrm{E}[N_{\mathcal{A}}]\leq \mathrm{E}[N]$ and
$\mathrm{E}[N_{\mathcal{B}}]\leq \mathrm{E}[N]$. This gives a bound on $T_b-1$.
\[T_b-1 \leq t(-\infty,b)\mathrm{E}[N].\]
By using \eqref{eq:ENUInequalities} we get as $a \to \infty$,
\begin{eqnarray*}
\frac{T_b-1}{\mathrm{E}\left[\tau-\Gamma| \tau \geq \Gamma\right]} \ \leq \ \frac{t(-\infty,b)\mathrm{E}[N]}{\mathrm{E}_1[\nu_b]} (1 + o(1)) \ \leq \ \frac{t(-\infty,b)\mathrm{E}[N]}{\mathrm{E}_1[\tilde{\nu}_b]} (1 + o(1)).
%= \frac{t(-\infty,b)\mathrm{E}[N]}{\mathrm{E}_1[\lambda]\mathrm{E}[N] } \frac{\mathrm{E}_1[\nu_b]}{\mathrm{E}\left[\tau-\Gamma| \tau \geq \Gamma\right]}.
\end{eqnarray*}
From \eqref{eq:NutildeNuCap} we know that $\mathrm{E}_1[\tilde{\nu}_b]=\mathrm{E}_1[\lambda]\mathrm{E}[N]$. Thus the upper bound on $\frac{T_b-1}{\mathrm{E}\left[\tau-\Gamma| \tau \geq \Gamma\right]}$ goes to 0 as $a \to \infty$. This proves the theorem.
\end{IEEEproof}

\end{document}